%%%%%%%%%%%%%%%%%%%%%%%%%%%%%%%%..BEGIN..%%%%%%%%%%%%%%%%%%%%%%%%%%%%%%%%
%%.........................J.REAL-25/06/2004.........................%%
%%................................macros...............................%%

\documentclass[preprint, number]{elsarticle}

\RequirePackage{amsthm,amsmath,natbib}
\RequirePackage[colorlinks,citecolor=blue,urlcolor=blue]{hyperref}
\RequirePackage{hypernat}

\newcommand\dela[1]{\Green{{\small{30.01.2010}}}}

\newcommand\delb[1]{\Green{{\small{13.02.2010}}}}

\newcommand\delc[1]{\Green{{\small{21.02.2010}}}}

\newcommand\deld[1]{\Red{{\small{15.03.2010}}}}
%\newcommand\del[1]{}

% settings
%\pubyear{2005}
%\volume{0}
%\issue{0}
%\firstpage{1}
%\lastpage{8}
%\arxiv{math.PR/0000000}

%\startlocaldefs \numberwithin{equation}{section}
\theoremstyle{plain}

%\endlocaldefs
%\usepackage{amsmath, amsthm, amssymb}
%\usepackage{showkeys}
%\usepackage{amsmath,amsfonts,amssymb}
\usepackage{amsfonts,amssymb,amsmath,graphicx}
\setcounter{MaxMatrixCols}{30}
\input colordvi
\newcommand\del[1]{}
\newcommand\Greendel[1]{}
\newcommand\old[1]{}
\newcommand\new[1]{#1}

\numberwithin{equation}{section}
\newcommand{\embed}{\hookrightarrow}
\newcommand\Cer{C_{1/2}^\xi(\mathbb{R})}
\newcommand\Oer{\Omega(\xi)}

\newcommand\Ce{C_{1/2}^\xi(\mathbb{R},\mathrm{E})}
\newcommand\Cx{C_{1/2}^\xi(\mathbb{R},\mathrm{X})}
\newcommand\Oe{\Omega(\xi,\mathrm{E})}
\newcommand\Ox{\Omega(\xi,\mathrm{X})}

\newcommand\vnl{(\!(}
\newcommand\vnr{)\!)}
\newcommand\Vertt{|\!|}

\newcommand{\lb}{\langle}
\newcommand{\rb}{\rangle}

%\newenvironment{proof}[1]{\begin{trivlist}
%\item[\hskip \labelsep {\bfseries #1}]}{\end{trivlist}}
%\textbf{#1.}

%TCIDATA{OutputFilter=latex2.dll}
%TCIDATA{Version=5.00.0.2552}
%TCIDATA{LastRevised=Friday, October 08, 2004 11:45:48}
%TCIDATA{<META NAME="GraphicsSave" CONTENT="32">}
%TCIDATA{<META NAME="SaveForMode" CONTENT="1">}
%TCIDATA{Language=American English}

%\theoremstyle{plain}
\newtheorem{theorem}{Theorem}[section]
\newtheorem{corollary}[theorem]{Corollary}
\newtheorem{lemma}[theorem]{Lemma}

\newtheorem{remark}[theorem]{Remark}

\newtheorem{definition}[theorem]{Definition}
\newtheorem{proposition}[theorem]{Proposition}

\newtheorem{Notation}[theorem]{Notation}

\newtheorem{Assumption}{Assumption}[section] %%oddzielna numeracja

\begin{document}
\begin{frontmatter}
\title{Random attractors for
stochastic $2D$-Navier-Stokes equations in some unbounded domains}

\author[zb]{Z. Brze{\'z}niak\corref{cor1}\fnref{u1}}\ead[label=e1]{zb500@york.ac.uk}

\author[sev]{T. Caraballo\fnref{t1}} \ead[label=e2]{caraball@us.es}

\author[sev]{J.A. Langa\fnref{t1}}
  \ead[label=e3]{langa@us.es}

\author[yl]{Y. Li\fnref{t2}}\ead[label=e4]{liyuhong@hust.edu.cn}

\author[gl]{G.~\L ukaszewicz\fnref{dag1}}
\ead[label=e5]{glukasz@mimuw.edu.pl}

\author[sev]{and J.~Real\fnref{t1}}\ead[label=e6]{jreal@us.es}

\cortext[cor1]{Corresponding author}

\fntext[u1]{Partially supported  by an EPSRC grant number
EP/E01822X/1}

\fntext[t1]{Partially supported by Ministerio de Ciencia e
Innovaci\'on (Spain) under grant MTM2008-00088, and
Consejer\'{\i}a de Innovaci\'{o}n, Ciencia y Empresa (Junta de
Andaluc\'{\i}a, Spain) under grants 2007/FQM314 and HF2008-0039}

\fntext[t2]{Partially Supported by Major Research Plan Program of National Natural Science Foundation of China (91130003), the SRF for the
ROCS, SEM of China, the Talent Recruitment Foundation of HUST}

\fntext[dag1]{Supported by Polish Government grant MEiN N201
547638 and EC Project FP6 EU SPADE2}

\address[zb]{Department of Mathematics, The University of
York, Heslington, York, Y010 5DD, UK }

\address[sev]{Dpto. de Ecuaciones Diferenciales  y An\'alisis
Num\'erico, Universidad de Sevilla, Apdo. de Correos 1160,  41080
Sevilla, Spain}

\address[yl]{Information Engineering  and Simulation
Centre,  Huazhong University of Science and Technology, Wuhan
430074, China}

\address[gl]{Institute of Applied Mathematics and Mechanics, University of
Warsaw, Banacha~2, 02-097 Warsaw, Poland}

\begin{abstract} We show that the stochastic flow generated by the $2$-dimensional Stochastic Navier-Stokes equations \textcolor{red}{with rough noise on} a Poincar{\'e}-like domain has a unique random attractor. \textcolor{red}{One of the technical problems associated with the rough noise is overcomed by the use of the corresponding Cameron-Martin (or reproducing kernel Hilbert)  space}. Our results complement the result by Brze{\'z}niak and Li \cite{Brz_Li_2006b} who showed that the corresponding flow is asymptotically compact and also generalize Caraballo et al. \cite{Caraballo_L_R_2006} who proved existence of
a unique \del{pullback} attractor for the time-dependent deterministic Navier-Stokes equations.
\end{abstract}

\begin{keyword}
random attractors, energy method,  asymptotically
compact
 random dynamical systems, stochastic Navier-Stokes,
unbounded domains
\newline
\textit{Mathematics Subject Classifications (2000):} 35B41\sep
35Q35
\end{keyword}

\end{frontmatter}

\section{Introduction}\label{sec-1}

The analysis of infinite dimensional Random Dynamical Systems
(RDS) is now an important branch in the study of qualitative
properties of stochastic PDEs. From the first papers of
Brze{\'z}niak et al. \cite{Brz_Cap_Fland_1993}, and  Crauel and
Flandoli \cite{Crauel_F_1994}, the use of the notions of random
and \del{pullback} attractors have been used in many papers to give
crucial information on the asymptotic behaviour of random
(Brze{\'z}niak et al. \cite{Brz_Cap_Fland_1993}), stochastic
(Arnold \cite{Arnold_1998}, Crauel and Flandoli
\cite{Crauel_F_1994}, Crauel \cite{Crauel_1995}) and
non-autonomous PDEs (Schmalfuss \cite{Schmalfuss}, Kloeden and
Schmalfuss \cite{Kloeden}, Caraballo et al.
\cite{Caraballo_L_R_2006}). Given a probability space, a random
attractor is a compact random set, invariant for the associated
RDS and attracting every bounded random set in its basis of
attraction (see Definition \ref{randomattractor}).

The main general result on  random attractors relies heavily on the
existence of a random compact attracting set, see Crauel et al.
\cite{Crauel4}. But this condition was only shown to be true  when the embedding $V\embed H$ is compact, i.e. when our
stochastic PDE is set in a \textit{bounded} domain. In the
deterministic case, this difficulty   was solved by different methods,
see Abergel \cite{Ab}, Ghidaglia \cite{Ghid-94} or Rosa
\cite{Rosa-98} for the autonomous case and \L ukaszewicz and
Sadowski \cite{lusa04} or Caraballo et al.
\cite{Caraballo_L_R_2006} for the non-autonomous one. Recently,
these methods have been also generalized to a stochastic framework, see Brze{\'z}niak and Li \cite{Brz_Li_2004}, \cite{Brz_2006},\cite{Brz_Li_2006b}, Bates
et al. \cite{bates2006},\cite{bates2009}, Wang \cite{wang2009}. In
particular, in Brze{\'z}niak and Li \cite{Brz_Li_2006b}, a deep
work is provided for the existence of a stochastic flow and its
asymptotic behaviour related to a 2D stochastic Navier-Stokes
equations in an unbounded domain with a very general irregular
additive white noise. Moreover, in \cite{Brz_Li_2006b} sufficient conditions for the existence of  a unique random global attractor are proposed.  This is the main subject that  we will develop in this work.

Indeed, we study the asymptotic behaviour of solutions to the
following problem. Let $\mathcal{O}\subset\mathbb{R}^{2}$ be an
open set, not necessarily bounded, with sufficiently regular boundary
$\partial\mathcal{O}$, and suppose that $\mathcal{O}$ satisfies
the Poincar\'{e} inequality, i.e., there exists a constant $C>0$
 such that
\begin{equation*}
C\int_{\mathcal{O}}\varphi^{2}\,d\xi\leq\int_{\mathcal{O}}|\nabla
\varphi|^{2}\,d\xi\quad\hbox{\rm for all $\varphi\in H^1_0({\mathcal O})$},
%\label{2.0}
\end{equation*}
and  consider  the Navier-Stokes equations (NSE) in $\mathcal{O}$ with
homogeneous Dirichlet boundary conditions:

\begin{equation}
\label{NSE}\left\{
\begin{array}
[c]{l}%
{\dfrac{\partial u}{\partial t}}-\nu\Delta
u+(u\cdot\nabla)u+\nabla
p=f+{\dfrac{dW(t)}{dt}}\text{ \ \ in \ }(0,+\infty)\times \mathcal{O},\\[2ex]%
\old{\nabla\cdot} \new{{\rm div}\, u}\;=0\text{ \ \ in \ }(0,+\infty)\times \mathcal{O},\\[1ex]%
u=0\;\;\mathrm{on}\;(0,+\infty)\times \partial\mathcal{O},\\[1ex]%
u(0)=u_0,\text{ \ \ }\\[1ex]%
\end{array}
\right.
\end{equation}
where $\nu>0$ is the kinematic viscosity, $u$ is the velocity
field of the fluid, $p$ the pressure,  $u_0$ the initial velocity
field, and $f$ a given external force field.  Here $W(t), t \in
\mathbb{R},$ is a two-sided cylindrical Wiener process in
$\mathrm{H}$ with its Reproducing Kernel Hilbert Space (RKHS)
$\mathrm{K}$ satisfying Assumption \ref{ass:A-01} below, defined
on some  probability space $ (\Omega ,{\mathcal F},  \mathbb{P})$.
Note that following \cite{Brz_Li_2006b} we  allow our driving
noise to be much rougher than in previous works in the literature,
see for instance Crauel and Flandoli \cite{Crauel_F_1994} or
Kloeden and Langa \cite{Kloeden+Langa_2007},  for which it is possible to
prove that there exists a random dynamical system associated to
our model. Indeed,
the rougher the noise the closer the model is to reality.
Landau and Lifshitz  in their fundamental 1959 work \cite[Chapter 17]{LL_1987}  proposed   to study NSEs under additional stochastic  small fluctuations. Consequently the authors consider the classical balance laws for mass, energy and momentum forced
by a random noise, to describe the fluctuations, in particular local stresses and temperature,
which are not related to the gradient of the corresponding quantities. In \cite[Chapter 12]{LL_1968} the same authors then derive
correlations for the random forcing by following the general theory of fluctuations. One of the requirements on the noise they impose is that the noise is either spatially uncorrelated or correlated as little as possible. It is known that spatially uncorrelated noise corresponds to the Wiener process with RKHS $L^2$ and if the RKHS of the Wiener process is the Sobolev space $H^{s,2}$, then the smaller the $s$ the less correlated noise is. In other words, the less regular spatially is the less correlated it is. Note that our Wiener process includes a finite dimensional Brownian Motion as a special case.

On the other hand, Caraballo et al. \cite{Caraballo_L_R_2006}
introduced a concept \old{for a cocycle to be \del{pullback}
asymptotically compact} \new{of a   \del{pullback} asymptotically compact
cocycle}, which was successfully used to prove the existence of
\del{pullback} attractors for a 2D non-autonomous Navier-Stokes equations,
and later has been also used to prove existence of random attractors
for stochastic lattice dynamical systems in Bates et al.
\cite{bates2006},  stochastic reaction-diffusion equations in Bates
et al. \cite{bates2009} and a stochastic Benjamin-Bona-Mahony
equation in Wang \cite{wang2009}, all of them related to unbounded
domains. In this paper, we will use the same concept, which
generalizes the one in \cite{Brz_Li_2006b}, to prove the existence and the
uniqueness of global random attractors for our stochastic 2D
Navier-Stokes equations with irregular noise in Poincar\'e unbounded
domains. In this sense, our main result implies that  the stochastic flow associated to our model is \del{pullback}
asymptotically compact (see Proposition \ref{prp:5.1}). It is
remarkable that we also prove the measurability of our random
attractor, which is usually missed in the literature.

%The outline of the paper is as follows.  Subsection 2.1 is devoted to
%the existence of \del{pullback} attractors for asymptotically compact
%general cocycles  not  necessarily generated by  Stochastic
%PDEs. This result is then applied in Subsection 2.2 to the case of a
%RDS, in which the measurability properties of the random attractor
%are addressed.

%In Sections \ref{sec-appl-sNSEs} and \ref{sec:as_com} we develop all the results
%needed to prove that our stochastic 2D Navier-Stokes equation
%satisfies the assumptions in the main theorem of Section \ref{sec-2}

\begin{Notation} By $\mathbb{N}$, $\mathbb{Z}$, $\mathbb{Z}^-$, $\mathbb{R}$ and $\mathbb{R}_+$ we will denote respectively  the sets of natural numbers (which includes the zero),   of integers,  of non-positive integers,  of real numbers and of all non-negative real numbers. By $\mathcal{B}(X)$, where $X$ is a topological space,  we will denote the $\sigma$-field of all Borel subsets of $X$.

\end{Notation}
\subsection*{Acknowledgements}
Preliminary versions of this work were presented at Cambridge  and Oberwolafch (May and November 2008). A brief report of the latter lecture is published as \cite{Brz-Oberwolfach}. The research of
the first named author was partially supported  by an EPSRC grant number
EP/E01822X/1.

\section{Stochastic  $2D$-Navier-Stokes equations with additive noise in
unbounded domains}\label{sec-appl-sNSEs}
\subsection{Statement of the problem}
Let $\mathcal{O}\subset\mathbb{R}^{2}$ be an open set, not
necessarily a bounded one. We denote by  $\partial\mathcal{O}$ the  boundary of $\mathcal{O}$. We will always assume that the closure $\bar{\mathcal{O}}$ of the set $\mathcal{O}$ is manifold with boundary of $C^\infty$ class, whose boundary is equal to $\partial\mathcal{O}$, i.e. we will assume that
$\mathcal{O}$ satisfies the condition (7.10) from \cite[chapter I]{L-M-72}:
$$
\leqno(7.10)
\left\{\begin{array}{l}\mbox{ $\partial\mathcal{O}$ is a $1$-dimensional infinitely differentiable manifold, $\mathcal{O}$ being} \\
 \mbox{ locally on one side of $\mathcal{O}$.
}\end{array}\right.
$$
We will also
assume that  $\mathcal{O}$ is a Poincar\'e
domain, i.e.  that there exists a constant $\lambda_1>0$ such that  the following Poincar\'{e} inequality is satisfied
\begin{equation}
\lambda_1\int_{\mathcal{O}}\varphi^{2}\,dx\leq\int_{\mathcal{O}}|\nabla
\varphi|^{2}\,dx\quad\hbox{\rm for all $\varphi\in H^1_0({\mathcal O}).$}
\label{2.0}
\end{equation}
%The largest constant $C$ satisfying the above inequality will be denoted by $\lambda_1$ and the Poincar\'{e} constant of the domain $\mathcal{O}$.

 In order to formulate  our
problem in an abstract framework  let us recall the definitions of  the following usual
functional spaces.
\begin{eqnarray*}
\mathbb{L}^2(\mathcal{O})&=&L^{2}(\mathcal{O},\mathbb{R}^2),\\
\mathbb{H}^k(\mathcal{O})&=&H^{k,2}(\mathcal{O},\mathbb{R}^2),\; k\in\mathbb{N},\\
\mathcal{V}&=&\left\{  u\in
C_{0}^{\infty}(\mathcal{O},\mathbb{R}^2)
;\;\mathrm{div}\,u\;=0\right\} ,\\
H&=& \mbox{the closure of $\mathcal{V}$ in } \mathbb{L}^2(\mathcal{O}),\\
\mathbb{H}_0^1(\mathcal{O})&=& \mbox{the closure of $C_{0}^{\infty}(\mathcal{O},\mathbb{R}^2)
$ in } \mathbb{H}^1(\mathcal{O}),\\
V&=& \mbox{the closure of $\mathcal{V}$ in } \mathbb{H}^1(\mathcal{O}).
\end{eqnarray*}
We endow the set $H$ with the inner product $(\cdot,\cdot)$ and the norm   $\left\vert \cdot\right\vert $ induced by $\mathbb{L}^2(\mathcal{O})$.
Thus, we have
\[
(u,v)=\sum_{j=1}^{2}\int_{\mathcal{O}}u_{j}(x)v_{j}(x)\,{d}x,
\]
Since the set $\mathcal{O}$ is a Poincar\'e domain,  the norms on $V$ induced by $\mathbb{H}^1(\mathcal{O})$ and $\mathbb{H}_0^1(\mathcal{O})$ are equivalent.  The latter norm   and the associated inner
product will be denoted by $\left\Vert \cdot\right\Vert $ and $\big(\!\big(\cdot,\cdot\big)\!\big)$, respectively.  They satisfy the following equality
\[
\big(\!\big(u,v\big)\!\big)=\sum_{i,j=1}^{2}\int_{\mathcal{O}}{\frac{\partial
u_{j}}{\partial x_{i}}\frac{\partial v_{j}}{\partial
x_{i}}}\,{d}x,\; u,v\in\mathbb{H}^1_0(\mathcal{O}).
\]
Since the space $\mathrm{V} $ is  densely and continuously
 embedded into $\mathrm{H}$, by   identifying   $\mathrm{H}$
 with its dual $\mathrm{H}^\prime$, we have the following embeddings
 \begin{equation}
 \label{eqn:Gelfanf}
\mathrm{V} \subset \mathrm{H}\cong\mathrm{H}^\prime \subset
\mathrm{V}^\prime.\end{equation}
Let us observe here  that, in particular,  the spaces $\mathrm{V}$, $\mathrm{H}$ and
$\mathrm{V}^\prime$ form a Gelfand triple.

We will denote by  $| \cdot |_{V^{\prime}}$   and  $\left\langle \cdot,\cdot\right\rangle $ the norm in
$V^{\prime}$ and  the
duality pairing between $V$ and $V^{\prime}$, respectively.

The presentation of the Stokes operator is standard and we  follow here the one given in  \cite{Brz_Li_2006b}.
We  begin with  defining  a bilinear
form $a:\mathrm{V}\times \mathrm{V} \to \mathbb{R}$ by
 \begin{equation}
 \label{form-a}
a(u,v):=(\nabla u,\nabla v), \quad u,v \in \mathrm{V}.
\end{equation}
Since  obviously the form   $a$ coincides with   the  $ \vnl
\cdot, \cdot\vnr$ scalar product in $\mathrm{V}$, it is
$\mathrm{V}$-continuous, i.e. it satisfies $\vert a(u,u) \vert
\leq C \Vert u \Vert^2$ for some $C
>0$ and all $u \in \mathrm{V}$. Hence, by the Riesz Lemma,
 there exists a unique linear operator
$\mathcal{A}:\mathrm{V} \to \mathrm{V}^\prime$,  such that
$a(u,v)=\lb \mathcal{A}u,v\rb$, for $u, v \in \mathrm{V}$. Moreover, since the $\mathcal{O}$ is a Poincar\'e
domain,  the form $a$ is  $\mathrm{V}$-coercive, i.e. it satisfies
$a(u,u) \geq \alpha \Vert u \Vert^2$ for some $\alpha
>0$ and all $u \in \mathrm{V}$. Therefore, in view  of the Lax-Milgram theorem, see for instance
   Temam \cite[Theorem II.2.1]{Temam-97},
 the operator $\mathcal{A}:\mathrm{V} \to \mathrm{V}^\prime$ is an
 isomorphism.

 Next we  define an unbounded linear operator
$\mathrm{A}$ in $\mathrm{H}$ as follows.
\begin{equation}
\label{def-A} \left\{
\begin{array}{ll}
D(\mathrm{A}) &:= \{u \in \mathrm{V}: \mathcal{A}u \in
\mathrm{H}\}\cr \mathrm{A}u&:=\mathcal{A}u, \, u \in
D(\mathrm{A}).
\end{array}
\right.
\end{equation}

It is now well established that under some  assumptions\footnote{These assumptions are satisfied in our case}
related to  the regularity of the domain $\mathcal{O}$, the space
$D(\mathrm{A})$ can be characterized in terms of  the Sobolev spaces.
For example, see \cite{Hey-80}, where only the 2-dimensional case
is studied but the result is also valid in the 3-dimensional case,
if $\mathcal{O}\subset \mathbb{R}^2$ is a uniform $C^2$-class
Poincar\'{e} domain, then with
$\mathrm{P}:\mathbb{L}^2(\mathcal{O}) \rightarrow \mathrm{H}$
being the orthogonal projection, we have
\begin{equation}
\label{eqn:4.3} \left\{
\begin{array}{ll}
D(\mathrm{A}) &:= \mathrm{V} \cap \mathbb{H}^2(\mathcal{O}),\cr
\mathrm{A}u&:=-\mathrm{P}\Delta u, \quad u\in D(\mathrm{A}).
\end{array}
\right.
\end{equation}
It is also a classical result,  see e.g. Cattabriga \cite{Cattabriga_1961} or Temam
\cite{Temam-97}, p. 56, that $\mathrm{A}$ is a non-negative self adjoint operator in
$\mathrm{H}$.  Moreover, see  p. 57 in \cite{Temam-97}, $\mathrm{V}=D(\mathrm{A}^{1/2})$.  Let us  recall a result of
Fujiwara--Morimoto \cite{Fuj+Mor_1977} that the projection $\mathrm{P}$ extends to a
bounded linear projection in the space $\mathbb{L}^q(D)$, $1< q< \infty$.

Consider the trilinear form $b$ on $V\times V\times V$ given by
\[
b(u,v,w)=\sum_{i,j=1}^{2}\int_{\mathcal{O}}u_{i}{\frac{\partial v_{j}%
}{\partial x_{i}}}w_{j}\,\,{d}x,\quad u,v,w\in V.
\]
Indeed, $b$ is a continuous bilinear form and, see for instance \cite{Temam-84}, Lemma 1.3, p.163 and Temam \cite{Temam-97},
\begin{equation}
\begin{aligned}
\label{eqn:b01} &b(u,v,v)=0,  \quad \mbox{\rm  for} \, u\in
\mathrm{V}, v\in \mathbb{H}_0^{1,2}(\mathcal{O}), \cr &b(u,v,w)=-b(u,w,v),
\quad \mbox{\rm for} \, u\in \mathrm{V}, v, w\in
\mathbb{H}_0^{1,2}(\mathcal{O}),
\end{aligned}
\end{equation}
\begin{equation}
\begin{aligned}
\label{eqn:4.0a}
 \vert b(u,v,w)  \vert \leq C\left\{
\begin{array}{ll}
 \vert u\vert ^{1/2}\vert \nabla u\vert^{1/2}\vert \nabla  v\vert ^{1/2}\vert \mathrm{A}v\vert ^{1/2}\vert w\vert ,
\quad \, u \in \mathrm{V}, v\in D(\mathrm{A}), w\in \mathrm{H},\\
 \vert u\vert ^{1/2}\vert \mathrm{A}u\vert ^{1/2}\vert \nabla  v\vert \vert w\vert , \quad
 u \in D(\mathrm{A}), v\in \mathrm{V}, w\in \mathrm{H},\\
 \vert u\vert \vert \nabla  v\vert \vert w\vert ^{1/2}\vert \mathrm{A}w\vert ^{1/2}, \quad
 u \in \mathrm{H}, v\in \mathrm{V}, w\in D(\mathrm{A}),\\
 \vert u\vert ^{1/2}\vert \nabla  u\vert ^{1/2}\vert \nabla  v\vert \vert w\vert ^{1/2}\vert \nabla  w\vert ^{1/2}, \quad
 u, v, w \in \mathrm{V}
\end{array}
\right.
\end{aligned}
\end{equation}
for some $C>0.$
Define next a bilinear map $B:V\times V\rightarrow V^{\prime}$ by
\[
\left\langle B(u,v),w\right\rangle =b(u,v,w),\quad u,v,w\in V,
\]
and a homogenous polynomial of second degree $B:V \rightarrow V^{\prime}$ by

\[
B(u)=B(u,u),\; u\in V.
\]

Let us also recall \cite[Lemma 4.2]{Brz_Li_2006b}.
\begin{lemma}\label{lem:form-b} The trilinear map
$b:\mathrm{V}\times \mathrm{V} \times \mathrm{V} \to \mathbb{R}$
has a unique extension to a bounded trilinear map from
$\mathbb{L}^4(\mathcal{O}) \times (\mathbb{L}^4(\mathcal{O})
\cap\mathrm{H})\times \mathrm{V}$ and from
$\mathbb{L}^4(\mathcal{O}) \times \mathrm{V}\times
\mathbb{L}^4(\mathcal{O})$ to $\mathbb{R}$. Moreover, $B$ maps
$\mathbb{L}^4(\mathcal{O})\cap\mathrm{H}$ (and so $\mathrm{V}$)
into $\mathrm{V}^\prime$ and
\begin{equation}\label{eqn:4.0}
\Vert B(u) \Vert_{\mathrm{V}^\prime} \leq C_1\vert u
\vert^2_{\mathbb{L}^4(\mathcal{O})} \leq 2^{1/2}C_1 \vert u \vert  \vert
\nabla  u \vert \leq C_2\vert u \vert_V^2 , \quad
 u \in \mathrm{V}.
\end{equation}
\end{lemma}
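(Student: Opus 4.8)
The plan is to establish the two continuous extensions of $b$ separately, then derive the bounds on $B$ from the second one together with the Gagliardo–Nirenberg–Ladyzhenskaya inequality and the Poincaré inequality. First I would recall the two-dimensional interpolation inequality $|\varphi|_{\mathbb{L}^4(\mathcal{O})}\le c\,|\varphi|^{1/2}|\nabla\varphi|^{1/2}$, valid for $\varphi\in\mathbb{H}^1_0(\mathcal{O})$ (hence on $\mathrm{V}$), which holds on an arbitrary domain because it follows from the corresponding inequality on $\mathbb{R}^2$ by extending by zero. Using H\"older's inequality with exponents $4,4,2$ on the integral defining $b$, one gets, for $u,v,w\in\mathcal{V}$,
\[
|b(u,v,w)|\le |u|_{\mathbb{L}^4(\mathcal{O})}\,|\nabla v|_{\mathbb{L}^2(\mathcal{O})}\,|w|_{\mathbb{L}^4(\mathcal{O})},
\]
which shows that $b$ extends continuously to $\mathbb{L}^4(\mathcal{O})\times\mathrm{V}\times\mathbb{L}^4(\mathcal{O})$; density of $\mathcal{V}$ in each factor (in the relevant topology) gives uniqueness of the extension.

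For the first extension, to $\mathbb{L}^4(\mathcal{O})\times(\mathbb{L}^4(\mathcal{O})\cap\mathrm{H})\times\mathrm{V}$, I would first use the antisymmetry $b(u,v,w)=-b(u,w,v)$ from \eqref{eqn:b01} (valid for $u\in\mathrm{V}$, $v,w\in\mathbb{H}^1_0(\mathcal{O})$) to reduce to the estimate just obtained: for smooth $u,v,w$,
\[
|b(u,v,w)|=|b(u,w,v)|\le |u|_{\mathbb{L}^4(\mathcal{O})}\,|\nabla w|_{\mathbb{L}^2(\mathcal{O})}\,|v|_{\mathbb{L}^4(\mathcal{O})},
\]
so $b$ is bounded on $\mathbb{L}^4(\mathcal{O})\times\mathbb{L}^4(\mathcal{O})\times\mathrm{V}$; restricting the middle factor to $\mathbb{L}^4(\mathcal{O})\cap\mathrm{H}$ is then immediate. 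One should note that the antisymmetry extends to the stated spaces by continuity, so that the two representations of the extended $b$ agree.

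Finally, for the bounds on $B$, I would use the identity $\langle B(u),w\rangle=b(u,u,w)$ and the $\mathbb{L}^4(\mathcal{O})\times\mathrm{V}\times\mathbb{L}^4(\mathcal{O})$-extension applied with the roles arranged so that $w$ sits in $\mathrm{V}$: by antisymmetry $\langle B(u),w\rangle=-b(u,w,u)$, hence
\[
|\langle B(u),w\rangle|\le |u|_{\mathbb{L}^4(\mathcal{O})}^2\,\|w\|,
\]
giving $\|B(u)\|_{\mathrm{V}'}\le |u|_{\mathbb{L}^4(\mathcal{O})}^2$ (absorb any universal constant into $C_1$). Then the interpolation inequality $|u|_{\mathbb{L}^4(\mathcal{O})}^2\le c\,|u|\,|\nabla u|\le 2^{1/2}c\,|u|\,|\nabla u|$ gives the middle bound in \eqref{eqn:4.0}, and the Poincaré inequality \eqref{2.0} (in the form $|u|\le\lambda_1^{-1/2}|\nabla u|=\lambda_1^{-1/2}\|u\|$) converts $|u|\,|\nabla u|$ into a multiple of $\|u\|^2=|u|_V^2$, yielding the last bound. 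The only mild subtlety, and the step I would be most careful about, is justifying that all the integration-by-parts identities and interpolation inequalities survive the passage from $\mathcal{V}$ (or $C_0^\infty$) to the limiting spaces on a possibly unbounded Poincar\'e domain; this is handled by the zero-extension argument for the Ladyzhenskaya inequality and by the density definitions of $\mathrm{H}$ and $\mathrm{V}$, with no essential difficulty beyond bookkeeping.
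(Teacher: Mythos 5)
Your proposal is correct and follows essentially the same route as the paper, whose proof consists precisely of the H\"older estimate $\vert b(u,v,w)\vert \le C\vert u\vert_{\mathbb{L}^4}\vert\nabla v\vert_{\mathbb{L}^2}\vert w\vert_{\mathbb{L}^4}$ (the paper's \eqref{eqn:4.00}), with the antisymmetry \eqref{eqn:b01}, the Ladyzhenskaya interpolation inequality and the Poincar\'e inequality supplying the remaining bounds in \eqref{eqn:4.0} exactly as you describe. Your write-up merely makes explicit the density and zero-extension bookkeeping that the paper leaves implicit.
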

\begin{proof} It it enough to observe
that from the H\"older inequality we  have  the following
inequality
\begin{equation}\label{eqn:4.00}
\vert b(u,v, w)\vert \le C\vert u\vert_{\mathbb{L}^4(\mathcal{O})} \vert
\nabla v \vert _{\mathbb{L}^2(\mathcal{O})}  \vert w
\vert_{\mathbb{L}^4(\mathcal{O})}, \quad u, v, w \in \mathbb{H}_0^{1,2}(\mathcal{O}).
\end{equation}
\end{proof}

\subsection{ Attractors for random dynamical systems}
\label{subsec-2.2}

\begin{definition}
\label{Def-DS} A triple $ \mathfrak{T}=\left(\Omega,\mathcal{F},
\vartheta\right) $ is called a  \textbf{measurable dynamical
system} (DS) iff
 $(\Omega,\mathcal{F})$ is a measurable space and
 $\vartheta:\mathbb{R}\times\Omega\ni (t,\omega)\mapsto
\vartheta_{t}\omega \in \Omega $ is a measurable map such that
$\vartheta_{0}=$identity, and
 for all $t,s \in
\mathbb{R}$, $\vartheta_{t+s}=\vartheta_{t}\circ\vartheta_{s}$. \\
A quadruple $ \mathfrak{T}=\left(\Omega,\mathcal{F}, \mathbb{P},
\vartheta\right) $ is called \textbf{metric DS}  iff
$(\Omega,\mathcal{F}, \mathbb{P})$ is a  probability space and
$\mathfrak{T}^\prime:=\left(\Omega,\mathcal{F}, \vartheta\right) $
is  a measurable DS such that for each $t\in \mathbb{R}$, the map
$\vartheta_{t}:\Omega \to \Omega$ is $\mathbb{P}$-preserving.

\end{definition}

\begin{definition}
\label{Def-RDS}  Suppose  that $\mathrm{X}$ is  a Polish space,
i.e. a metrizable complete separable  topological space,
$\mathcal{B}$ is its Borel $\sigma-$field and
 $\mathfrak{T}$ is  a metric DS.  A
map $\varphi: \mathbb{R}_+\times\Omega\times\mathrm{X} \ni
(t,\omega,x) \mapsto \varphi(t,\omega,x) \in \mathrm{X}$
  is called a \textbf{measurable random dynamical system} (RDS) (on $\mathrm{X}$ over $\mathfrak{T}$), iff

\begin{enumerate}
\item[(i)]  $\varphi$ is
$\left(\mathcal{B}(\mathbb{R}_+)\bigotimes\mathcal{F}\bigotimes\mathcal{B},
\mathcal{B}\right)$-measurable;
 \item[(ii)] $\varphi$ is a $\vartheta$-cocycle, i.e.
 $$\varphi(t+s,\omega,x)=\varphi\big(t,\vartheta_s\omega,\varphi(s,\omega,x)\big).$$

\end{enumerate}
The map $\varphi$ is said to be
\textbf{continuous} iff, for all $(t,\omega) \in
\mathbb{R}_+\times \Omega$, $\varphi(t, \omega,\cdot): \mathrm{X}
\rightarrow \mathrm{X}$ is continuous. Similarly,
$\varphi$ is said to be time continuous iff,
for all $\omega \in \Omega$ and $x\in \mathrm{X}$, the map
$\varphi(\cdot,\omega,x):\mathbb{R}_+ \rightarrow \mathrm{X}$ is
continuous.
\end{definition}

%\begin{remark}\label{remark-1}{\rm
 %Note that
%our definition of RDS is different from that  in
%\cite{Arnold_1998}, see Definition 1.1.2 therein. This change is
%motivated by applications to RDS on  non-locally compact metric
%spaces (in particular, in infinite dimensional Hilbert spaces).
%}\end{remark}

The notion of a random set is presented following \cite{Brz_Cap_Fland_1993}, see also Crauel \cite{Crauel_1995} and Definition 2.3 in \cite{Brz_Li_2006b}.
For two non-empty sets $A, B \subset X$, where  $(\mathrm{X},d)$ is a Polish space, we  put $$d(A,B)=\sup_{x\in
A}d(x,B)\quad \hbox{ and }\quad \rho(A,B)= \max\{d(A,B),d(B,A)\}.$$
The latter is called the Hausdorff distance.
 It is known that   $\rho$  restricted to the family $\mathfrak{CB}(X)$ (the family of all non-empty closed and bounded
subsets of $\mathrm{X}$)  is a distance, see  Castaing and Valadier
\cite{C+V_1977}. From now on, let $\mathfrak{X}$ be the $
\sigma-$field on $\mathfrak{CB}$ generated by open sets with
respect to  the Hausdorff metric $\rho$, e.g.
\cite{Brz_Cap_Fland_1993},  \cite{C+V_1977} or Crauel
\cite{Crauel_1995}.

\begin{definition}\label{Def-randomset} Let us assume that $(\Omega,\mathcal{F})$ is a measurable
space and $(\mathrm{X},d)$ is a Polish space.  A set
valued map $C:\Omega\rightarrow \mathfrak{CB}(X)$  is said to be measurable iff $ C $ is
$\left(\mathcal{F}, \mathcal{X}\right)$-measurable. Such a map $C$ will  often be called  a \textbf{ closed and bounded random
set} on $X$. A closed and bounded random  set $C$ on $X$ will be called a
\textbf{compact random   set} on $X$ iff for each
$\omega\in\Omega$, $C(\omega)$ is a compact   subset of $X$.

\end{definition}

\begin{remark}\label{rem-chueshov}{\rm
Let $f:X\mapsto\mathbb{R}_+$ be a continuous function on the
Polish space $X$, and $R:\Omega\mapsto\mathbb{R}_+$  an
$\mathcal{F}$-measurable random variable. If the set
$C_{f,R}(\omega):=\{x:\;f(x)\leq R(\omega)\}$ is non-empty for
each $\omega\in\Omega,$ then $C_{f,R}$ is a closed and bounded
random set (see \cite{Chueshov-random} Proposition 1.3.6).
}\end{remark}

We  denote by $\mathcal{F}^u$ the $\sigma$-algebra of universally measurable sets associated to
the measurable space $(\Omega,\mathcal{F})$, see Crauel's monograph \cite{Crauel_1995} for the definition and basic properties. 

To our best knowledge, the following definition appeared for the first time in the fundamental work by Fladoli and Schmalfuss \cite{Flandoli-Schmalfuss_1996}, see Definition 3.4. 

\begin{definition}
\label{randomattractor} A random set $A:\Omega\rightarrow\mathfrak{CB}(X)$  is
a \textbf{random  $\mathfrak{D}$-attractor} iff
\begin{trivlist}
\item[(i)] $A$ is a
compact random set,
\item[(ii)] ${A}$ is  $\varphi$-invariant, i.e., P-a.s.
$$
\varphi (t,\omega)A(\omega)=A(\vartheta _t\omega)
$$
\item[(iii)]   $A$  is
 $\mathfrak{D}$-attracting, in the sense that, for all $D\in \mathfrak{D}$ it holds
 $$
 \lim_{t\to \infty}d(\varphi (t, \vartheta _{-t}\omega)D(\vartheta _{-t}\omega), A(\omega))=0.
 $$
\end{trivlist}
\end{definition}

\begin{definition}\label{DEF-Asymptotic compact}
 We say that a  RDS $\vartheta$-cocycle $\varphi$ on $X$
  is $\mathfrak{D}$-\textbf{asymptotically compact} iff for each $D\in\mathfrak{D}$, for every
$\omega \in \Omega$,  for  any positive sequence $(t_n)$ such that
$t_n \rightarrow \infty$ and for any  sequence $\{x_n\} $ such that
$$x_{n}\in D(\vartheta_{-t_{n}}\mathbb{\omega}), \; \mbox{ for all }
n\in\mathbb{N},$$
the following set is  pre-compact in
$\mathrm{X}$:
$$\{\varphi(t_n,\vartheta
_{-t_n}\omega,x_n): n \in \mathbb{N}\}.$$
\end{definition}

We now write the result \del{we need} on the
existence of a random $\mathfrak{D}$-attractor, see \cite{Caraballo_L_R_2006, bates2006, bates2009, wang2009}.

\begin{theorem}
\label{Teorema1} Assume that
$\mathfrak{T}=\left(\Omega,\mathcal{F}, \mathbb{P},
\vartheta\right)$ is a metric DS, $\mathrm{X}$ is a Polish  space,
$\mathfrak{D}$ is a nonempty class of closed and bounded  random sets on $X$ and
$\varphi$ is a continuous, $\mathfrak{D}$-asymptotically compact
RDS on $\mathrm{X}$ (over $\mathfrak{T}$). Assume that there exists a
$\mathfrak{D}$-absorbing closed and bounded  random set $ {B}$ on $X$, i.e., given $D\in \mathfrak{D}$ there exists $t(D))$ such that
$\varphi (t,\vartheta _{t}\omega)D(\vartheta _{-t}\omega)\subset B(\omega)$ for all $t\geq t(D).$
Then, there exits an   $\mathcal{F}^u$-measurable $\mathfrak{D}$-attractor $A$ given  by
\begin{equation}
A(\mathbb{\omega})=\Omega_{B}(\mathbb{\omega}),\quad\mathbb{\omega
}\in\Omega, \label{A}
\end{equation}
with
$$
\Omega_B(\omega)=\bigcap_{T\geq0}\overline{\bigcup_{t\geq
T}\varphi(t, \vartheta_{-t} \omega,B(\vartheta_{-t}
\omega)}),\quad\omega\in\Omega.
$$
\end{theorem}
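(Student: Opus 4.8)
The plan is to follow the now-classical recipe for constructing random attractors from an absorbing set together with asymptotic compactness, adapting the deterministic pullback-attractor argument of Caraballo et al. and the random version in Crauel--Flandoli and Flandoli--Schmalfuss, while paying extra attention to the measurability claim (which is the reason the attractor lands in $\mathcal{F}^u$ rather than $\mathcal{F}$). First I would fix $\omega\in\Omega$ and show that the candidate set $\Omega_B(\omega)=\bigcap_{T\ge0}\overline{\bigcup_{t\ge T}\varphi(t,\vartheta_{-t}\omega,B(\vartheta_{-t}\omega))}$ is nonempty and compact. Nonemptiness and compactness both come from $\mathfrak{D}$-asymptotic compactness: take any sequence $t_n\to\infty$ and any $x_n\in B(\vartheta_{-t_n}\omega)$; then $\{\varphi(t_n,\vartheta_{-t_n}\omega,x_n)\}$ is precompact, so it has a convergent subsequence whose limit lies in every tail closure, hence in $\Omega_B(\omega)$; the same diagonal/subsequence reasoning shows $\Omega_B(\omega)$ is itself compact (any sequence in it can be approximated by points of the form $\varphi(t,\vartheta_{-t}\omega,\cdot)$ with $t$ large and then one extracts a convergent subsequence).

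Second I would prove attraction. Suppose $D\in\mathfrak{D}$ but $d(\varphi(t,\vartheta_{-t}\omega,D(\vartheta_{-t}\omega)),\Omega_B(\omega))\not\to0$; then there are $t_n\to\infty$, $x_n\in D(\vartheta_{-t_n}\omega)$ and $\varepsilon>0$ with $d(\varphi(t_n,\vartheta_{-t_n}\omega,x_n),\Omega_B(\omega))\ge\varepsilon$. Using the absorbing property pick $t(D)$ so that for $s\ge t(D)$ one has $\varphi(s,\vartheta_{-s}\omega,D(\vartheta_{-s}\omega))\subset B(\omega)$; applying this at the shifted point $\vartheta_{-t_n}\omega$ and using the cocycle property $\varphi(t_n,\vartheta_{-t_n}\omega,x_n)=\varphi(t(D),\vartheta_{-t(D)}\omega,\varphi(t_n-t(D),\vartheta_{-t_n}\omega,x_n))$ with $\varphi(t_n-t(D),\vartheta_{-t_n}\omega,x_n)\in B(\vartheta_{-t(D)}\omega)$, the sequence $\varphi(t_n,\vartheta_{-t_n}\omega,x_n)$ is of exactly the form covered by asymptotic compactness applied to $B$, hence has a subsequence converging to some $y\in\Omega_B(\omega)$, contradicting the distance bound. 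Third, $\varphi$-invariance: the inclusion $\varphi(t,\omega)\Omega_B(\omega)\subset\Omega_B(\vartheta_t\omega)$ follows from continuity of $\varphi(t,\omega,\cdot)$ and the cocycle identity applied to a convergent approximating sequence; the reverse inclusion $\Omega_B(\vartheta_t\omega)\subset\varphi(t,\omega)\Omega_B(\omega)$ is the more delicate one and is obtained by taking $y\in\Omega_B(\vartheta_t\omega)$, writing it as a limit of $\varphi(s_n,\vartheta_{-s_n}\vartheta_t\omega,b_n)$ with $s_n\to\infty$, splitting off the last time-$t$ piece via the cocycle property, using asymptotic compactness to extract a convergent subsequence of the $\varphi(s_n-t,\vartheta_{-s_n}\vartheta_t\omega,b_n)$ to a point $z\in\Omega_B(\omega)$, and then invoking continuity to get $y=\varphi(t,\omega,z)$.

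Finally, for measurability: the map $\omega\mapsto\Omega_B(\omega)$ is built from countable intersections and unions of closures of images of random sets under the jointly measurable map $\varphi$, so $\omega\mapsto\overline{\bigcup_{t\ge T}\varphi(t,\vartheta_{-t}\omega,B(\vartheta_{-t}\omega))}$ is measurable; the countable intersection over $T\in\mathbb{N}$ of these decreasing closed random sets is then $\mathcal{F}^u$-measurable by the standard fact that the intersection of a decreasing sequence of measurable closed-valued maps is measurable with respect to the universal completion (this is where $\mathcal{F}$ must be enlarged to $\mathcal{F}^u$, because the projection theorem only yields universal measurability). I expect the main obstacle to be the careful bookkeeping in the $\varphi$-invariance step (reverse inclusion) and the measurability step, rather than the attraction estimate, which is essentially formal once the absorbing and asymptotic-compactness hypotheses are in hand; both delicate points are handled by combining the cocycle identity, continuity of $\varphi(t,\omega,\cdot)$, and the projection/measurable-selection machinery.
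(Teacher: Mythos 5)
Your per-$\omega$ existence argument (nonemptiness and compactness of $\Omega_B(\omega)$, attraction via absorption plus the cocycle identity, invariance via continuity and asymptotic compactness) is exactly the standard route; the paper does not reprove this part but simply quotes Theorem 7 of Caraballo--\L ukaszewicz--Real (Theorem \ref{Teorema1'}), so there is nothing to object to there, beyond the shared and harmless caveat that asymptotic compactness is applied to sequences emanating from $B$ itself, i.e.\ one implicitly treats $B$ as a member of $\mathfrak{D}$ (true in the application).

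The measurability step, which is the only part the paper actually proves, is where your argument has a genuine gap. First, $\gamma_B^T(\omega)=\bigcup_{t\ge T}\varphi\big(t,\vartheta_{-t}\omega,B(\vartheta_{-t}\omega)\big)$ is a union over an \emph{uncountable} set of times, so it is not ``built from countable unions'' of measurable images: time-continuity of $\varphi$ is not among the hypotheses of Theorem \ref{Teorema1}, so you cannot reduce to rational $t$, and plain $\mathcal{F}$-measurability of $\omega\mapsto\overline{\gamma_B^T(\omega)}$ is already unjustified; the projection theorem is needed at this very stage and yields only $\mathcal{F}^u$-measurability (this is the content of Proposition 1.5.1 in Chueshov, which the paper cites for exactly this purpose). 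Second, the ``standard fact'' that a decreasing intersection of measurable closed-valued maps is universally measurable is true, but its proof requires the graph-measurability and projection-theorem machinery you are glossing over; the naive distance-function shortcut fails, because for a decreasing sequence of non-compact closed sets one need not have $d\big(x,\bigcap_n F_n\big)=\lim_n d(x,F_n)$. This is precisely where the paper uses $\mathfrak{D}$-asymptotic compactness: for every $\omega$ and every fixed $x$ it proves the identity $d\big(x,\Omega_B(\omega)\big)=\lim_{n\to\infty}d\big(x,\gamma_B^n(\omega)\big)$ by choosing near-minimizers $x_n\in\gamma_B^n(\omega)$ and extracting a convergent subsequence with limit in $\Omega_B(\omega)$; combined with the $\mathcal{F}^u$-measurability of $\omega\mapsto d\big(x,\gamma_B^n(\omega)\big)$ this gives $\mathcal{F}^u$-measurability of $\omega\mapsto d\big(x,\Omega_B(\omega)\big)$, hence of $A$. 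So either adopt the paper's distance-limit argument (which makes the role of asymptotic compactness in the measurability claim explicit), or spell out the graph-measurability and projection steps your sketch implicitly relies on; as written, the measurability paragraph does not stand on its own.
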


\textcolor{blue}{\begin{remark}\label{rem-Kloeden+Langa_2007}{\rm
One should mention here  that a related paper \cite{Kloeden+Langa_2007}
is about   2D Navier-Stokes in \textbf{bounded} domains
and and with much more regular noise and its results do not imply those from the current paper.   On the other hand Theorem 4.6 from that paper
is  applicable to Stochastic NSEs in 2-D unbounded
domains, instead of Theorem \ref{Teorema1} provided  one can prove that the corresponding system is asymptotically compact  and
has random bounded absorbing set.  That's is what we do in our
paper, with a small but important difference that our class of families of random sets with respect
to which AC and absorption hold are different. Our Theorem \ref{Teorema1} on the existence of attractor is
generalization (or modification, if one prefers) of the above Theorem 4.6 to the case considered in the present paper.
}\end{remark}}

\begin{proof} The existence of $A(\omega)$ follows from Theorem 7 in \cite{Caraballo_L_R_2006}. We only need to prove the measurability claim. For this we will follow a slight modification of the proof of Proposition 1.6.2
in \cite{Chueshov-random}, see also the proof of Lemma 2.3 in \cite{Brz_Cap_Fland_1993}. Observe that evidently, for every $\omega\in\Omega,$
$$\Omega_{B}(\mathbb{\omega})=\bigcap_{n\in\mathbb{Z}_+}\overline{\gamma_B^n(\omega)},$$
where, by definition,
$$\gamma_B^n(\omega)=\bigcup_{t\geq
n}\varphi\big(t, \vartheta_{-t} \omega,B(\vartheta_{-t}
\omega)\big).$$

Let us fix $x\in X$. Since $\gamma_B^n(\omega)\subset\gamma_B^{n+1}(\omega)\subset\overline{\gamma_B^{n+1}(\omega)}\subset\overline{\gamma_B^n(\omega)},$
we have that
$$d\big(x,\gamma_B^{n}(\omega)\big)\leq d\big(x,\gamma_B^{n+1}(\omega)\big)\leq d\big(x,\Omega_B(\omega)\big),$$
and therefore there exists the $\displaystyle\lim_{n\to\infty}d\big(x,\gamma_B^{n}(\omega)\big),$
and $$\lim_{n\to\infty}d\big(x,\gamma_B^{n}(\omega)\big)\leq d\big(x,\Omega_B(\omega)\big),\quad\forall\,\omega\in\Omega.$$

Let us fix $\omega\in\Omega,$ and take $x_n\in\gamma_B^{n}(\omega)$ such that

$$d(x,x_n)\leq d\big(x,\gamma_B^{n}(\omega)\big)+\displaystyle\frac{1}{n},\quad n\in\mathbb{N}.$$
Since $\varphi$ is a  $\mathfrak{D}$-asymptotically compact
RDS on $\mathrm{X}$, there exists a subsequence $n_k=n_k(\omega)$ and an element $y=y(\omega)$ such that
$x_{n_k}\to y$. Evidently, $y\in\Omega_B(\omega)$\del{(see Remark \ref{rem:2.0})}. Therefore,
$$d\big(x,\Omega_B(\omega)\big)\leq d(x,y)=\lim_{k\to\infty}d(x,x_{n_k})\leq\lim_{n\to\infty}d\big(x,\gamma_B^{n}(\omega)\big).$$

Thus, we get

$$d\big(x,\Omega_B(\omega)\big)=\lim_{n\to\infty}d\big(x,\gamma_B^{n}(\omega)\big)\quad\forall\,\omega\in\Omega,$$
and consequently, observing that by Proposition 1.5.1 in
\cite{Chueshov-random} the map $\omega\mapsto d\big(x,\gamma_B^{n}(\omega)\big)$ is $\mathcal{F}^u$-measurable,
we obtain that
the map $\omega\mapsto d\big(x,\Omega_B(\omega)\big)$ is also
$\mathcal{F}^u$-measurable.
\end{proof}

\begin{remark}\label{RACDFremark}{\rm
 If $\mathfrak{D}$
contains every bounded and closed nonempty deterministic subsets
of $X$, then
as a consequence of this theorem, of Theorem 2.1 in \cite{Crauel4}, and of Corollary 5.8 in
\cite{Crauel_F_1999},
we obtain that the random attractor $A$ is given by
\begin{equation}\label{RACDF}
A(\omega)=\overline{\bigcup_{C\subset X}\Omega_C(\omega)}\quad\mathbb{P}-a.s.,
\end{equation}where the union in \eqref{RACDF}
is made for all bounded and closed nonempty deterministic subsets $C$
of $X$.
}\end{remark}

\subsection{Stochastic Navier-Stokes equations with an additive noise}
\label{subsec-3.1}

The model we consider in this subsection is the same as  the one studied in \cite{Brz_Li_2006b}. It was shown therein that the
RDS generated by the stochastic NSEs  below is asymptotically
compact. We will strengthen that result by showing that
\begin{trivlist}
\item[(i)] it is $\mathfrak{D}$-asymptotically
compact  and \item[(ii)] there exists a family
${B}\in\mathfrak{D}$ which is
$\mathfrak{D}$-absorbing,
\end{trivlist} for a family $\mathfrak{D}$ of random closed and bounded  sets to be defined below.
Thus we will conclude that Theorem \ref{Teorema1} is applicable.

Our aim in this subsection is to study the following stochastic
Navier-Stokes equations in $\mathcal{O}$
\begin{equation}\label{eqn_SNSE_add_noise}
\left\{
\begin{array}{ll}
du + \{\nu \mathrm{A}u + B(u) \}\,dt= f\,dt+dW(t), \quad t \geq 0\\
u(0)=x ,
\end{array}\right.
\end {equation}
where we assume that  $x\in \mathrm{H}$,  $f \in
\mathrm{V}^\prime$ and $W(t), t \in \mathbb{R},$ is a two-sided
cylindrical Wiener process in $\mathrm{H}$ with its Reproducing
Kernel Hilbert Space (RKHS) $\mathrm{K}$ satisfying Assumption
\ref{ass:A-01} below (see Remark 6.1 in \cite{Brz_Li_2006b})
defined on some  probability space $ (\Omega ,{\mathcal F},
\mathbb{P})$. The following is our standing assumption.

\begin{Assumption}\label{ass:A-01}
$\mathrm{K} \subset \mathrm{H} \cap \mathbb{L}^4(\mathcal{O})$ is a Hilbert
space such that
 for some $\delta \in (0,1/2)$,
\begin{equation}
\label{A-gamma}
\begin{aligned} \mathrm{A}^{-\delta}:K \to \mathrm{H} \cap \mathbb{L}^4(\mathcal{O})
\hbox{ is } \gamma\hbox{-radonifying}.
\end{aligned}
\end{equation}
\end{Assumption}

Let us denote $\mathrm{X}=\mathrm{H} \cap
\mathbb{L}^4(\mathcal{O})$ and let $\mathrm{E}$ be the completion
of $X$ $A^{-\delta}(\mathrm{X})$ with respect to the image norm
$\vert x \vert_{\mathrm{E}}=\vert A^{-\delta}x\vert_{\mathrm{X}}$,
$x \in \mathrm{X}$.
 It is  well known that $\mathrm{E}$ is a
separable Banach space. For  $\xi \in[0,1/2)$ we set
$$
\Vert \omega \Vert_{\Cer}=\sup_{t \not= s \in \mathbb{R}}\frac{\vert
\omega(t)-\omega(s)\vert_\mathrm{E}}{\vert t-s \vert^\xi (1+\vert
t \vert + \vert s \vert)^{1/2}}.
$$
By $C_{1/2}^\xi(\mathbb{R},\mathrm{E})$ we will denote the set of all
$\omega \in C(\mathbb{R},\mathrm{E})$ such that   $\omega(0)=0$ and $\Vert \omega \Vert_{\Cer}<\infty$.
It is easy  to prove that the  closure of $\{
\omega \in C_0^\infty(\mathbb{R},\mathrm{E}): \omega(0)=0\}$ in
$\Cer$, denoted by $\Omega(\xi,\mathrm{E})$,  is a separable
Banach space\footnote{But for $\xi\in(0,1)$  $\Cer$  endowed with the norm
$\Vert \cdot \Vert_{\Cer}$
is   not separable.}.

Finally, we set
$$
\Vert\omega \Vert_{C_{1/2}(\mathbb{R},\mathrm{E})}=\sup_{t\in
\mathbb{R}}\frac{\vert \omega(t)\vert_\mathrm{E}}{ 1+\vert t
\vert^{1/2}}
$$
and \del{we} denote  by
$C_{1/2}(\mathbb{R},\mathrm{E})$  the space of all continuous
functions  $\omega: \mathbb{R} \to \mathrm{E}$ such that $\Vert\omega \Vert_{C_{1/2}(\mathbb{R},\mathrm{E})}<\infty$.
Then the space $C_{1/2}(\mathbb{R},\mathrm{E})$ endowed with the  norm $\Vert\cdot \Vert_{C_{1/2}(\mathbb{R},\mathrm{E})}$
is a separable Banach space.

 By  $\mathcal{F}$  we will denote the Borel $\sigma$-algebra on $\Oer$.
One can show by methods from \cite{Brz_1996}, but see also
\cite{Hairer_2003} for a similar problem in the one dimensional
case, that for $\xi \in (0, 1/2)$, there exists a Borel
probability measure $\mathbb{P}$ on $\Oer$ such that the canonical
process $w=(w_t)_{t\in \mathbb{R}}$, defined by
\begin{equation}
\label{def:BM} w_t(\omega):=\omega(t), \quad \omega \in \Oer, \;t\in \mathbb{R},
\end{equation}
where
$i_t: \Oer \ni \gamma \mapsto \gamma(t) \in \mathrm{E}$,  is the evaluation map at time $t$,
is a two-sided Wiener process such that the Cameron-Martin, i.e. the
Reproducing Kernel Hilbert,  space of the Gaussian measure
$\mathcal{L}(w_1)$ on $\mathrm{E}$ is equal to $\mathrm{K}$.

For
$t \in \mathbb{R}$, let $\mathcal{F}_t:=\sigma\{w_s: s \leq t\}$.
Since for each $t \in \mathbb{R}$ the map $z\circ i_t:
\mathrm{E}^\ast \to L^2(\Oer,\mathcal{F}_t,\mathbb{P}) $  satisfies
$\mathbb{E} \vert z\circ i_t \vert^2 =t \vert z
\vert_{\mathrm{K}}^2$, there exists a unique extension of $z\circ
i_t$ to a bounded linear map $W_t:\mathrm{K} \to
L^2(\Oer,\mathcal{F}_t,\mathbb{P}) $. Moreover, the family
$(W_t)_{t \in \mathbb{R}}$ is a $\mathrm{H}$-cylindrical Wiener
process on a filtered probability space $(\Oer,
(\mathfrak{F}_t)_{t \in \mathbb{R}}, \mathbb{P})$ in the sense of
definition given in  \cite{Brz-P-01}.

On the space $C_{1/2}(\mathbb{R},\mathrm{X})$
 we consider  a flow $\vartheta=\left(
\vartheta_t\right)_{t\in \mathbb{R}}$  defined by
$$
\vartheta_t \omega(\cdot)=\omega(\cdot+t)-\omega(t), \quad \omega
\in \Omega, \; t \in \mathbb{R}.
$$
With respect to this flow  the spaces  $\Cer$ and $\Omega(\xi, \mathrm{E})$
are invariant and we will often denote by $\vartheta_t$ the
restriction of $\vartheta_t$ to any one of these spaces.

It is obvious that for each $t\in \mathbb{R}$, $\vartheta_t$
preserves $\mathbb{P}$. In order to define an Ornstein-Uhlenbeck
process we need to recall some analytic preliminaries from
\cite{Brz_Li_2006b}.

\begin{proposition}\label{prp:z-1}
Assume that $A$ is a generator of an analytic semigroup
$\{e^{-tA}\}_{t \geq 0}$ on a separable Banach space $\mathrm{X}$,
such that for some $C>0$ and $\gamma>0$
\begin{equation}\label{eqn:A-1} \Vertt A^{1+\delta}e^{-tA} \Vertt _{\mathcal{L}(\mathrm{X}, \mathrm{X})} \leq
C t^{-1-\delta}e^{-\gamma t}, \quad t \geq 0.
\end{equation}

For $\xi\in  (\delta, \frac{1}{2})$ there exists a unique linear and
bounded map $\hat{z}: \Cx \to
C_{1/2}(\mathbb{R},\mathrm{X})$ such that for any  $\tilde{\omega}\in \Cx$
\begin{equation}\label{eqn:A-def-z}
 \hat{ z}(t)=\hat{ z}(\tilde{\omega})(t)=\int_{-\infty}^t
A^{1+\delta}e^{-(t-r)A}\big(\tilde{\omega}(t)-\tilde{\omega}(r)\big)\, dr,
t \in \mathbb{R}.
\end{equation}
  In particular,
 there exists a constant $C_2< \infty$ such that for any $\tilde{\omega}\in
\Cx$
\begin{equation}\label{eqn:A-sublinear}
\vert \hat{ z}(\tilde{\omega})(t)\vert_\mathrm{X} \leq C_2(1+\vert t
\vert^{1/2})\Vert \tilde{\omega} \Vert, \quad t \in \mathbb{R}.
\end{equation}
Moreover,  the above results are valid with the space $\Cx$
replaced by $\Omega(\xi, \mathrm{X})$.
\end{proposition}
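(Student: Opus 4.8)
The plan is to \emph{define} $\hat z$ directly by the right-hand side of \eqref{eqn:A-def-z}, to check that for every $\tilde\omega\in\Cx$ this formula produces an element of $C_{1/2}(\mathbb{R},\mathrm{X})$ with norm controlled by $\|\tilde\omega\|$ (the $\Cx$-norm), and then to observe that linearity, boundedness and uniqueness of $\hat z$ come for free: the formula is manifestly linear in $\tilde\omega$ and it determines the value $\hat z(\tilde\omega)(t)$ for every $t\in\mathbb{R}$, so any two linear bounded maps satisfying \eqref{eqn:A-def-z} must coincide. Thus the whole content is the convergence of the integral and the two regularity/growth estimates.

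First I would establish absolute convergence of the Bochner integral in \eqref{eqn:A-def-z}. Fix $t\in\mathbb{R}$ and substitute $s=t-r$, so that $\hat z(\tilde\omega)(t)=\int_0^\infty A^{1+\delta}e^{-sA}\bigl(\tilde\omega(t)-\tilde\omega(t-s)\bigr)\,ds$. For $s>0$ the operator $A^{1+\delta}e^{-sA}$ is bounded on $\mathrm{X}$ by analyticity of the semigroup and depends norm-continuously on $s$, while $s\mapsto\tilde\omega(t)-\tilde\omega(t-s)$ is continuous, so the integrand is continuous on $(0,\infty)$, in particular Bochner measurable. Combining \eqref{eqn:A-1} with the definition of the $\Cx$-norm,
\[
\bigl|A^{1+\delta}e^{-sA}(\tilde\omega(t)-\tilde\omega(t-s))\bigr|_{\mathrm{X}}\le C\,\|\tilde\omega\|\, s^{\xi-1-\delta}e^{-\gamma s}\,(1+|t|+|t-s|)^{1/2}.
\]
Since $\xi>\delta$ we have $\xi-1-\delta>-1$, so $s^{\xi-1-\delta}$ is integrable near $s=0$, and $e^{-\gamma s}$ dominates the polynomial factor as $s\to\infty$; hence the integral converges absolutely and $\hat z(\tilde\omega)(t)\in\mathrm{X}$.

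Next I would extract the growth bound \eqref{eqn:A-sublinear}. Using $|t-s|\le|t|+s$ one gets $1+|t|+|t-s|\le 2(1+|t|)+s$, hence $(1+|t|+|t-s|)^{1/2}\le\sqrt2\,(1+|t|)^{1/2}+s^{1/2}$. Inserting this into the estimate above and integrating term by term,
\[
|\hat z(\tilde\omega)(t)|_{\mathrm{X}}\le C\,\|\tilde\omega\|\Bigl[\sqrt2\,(1+|t|)^{1/2}\!\int_0^\infty\! s^{\xi-1-\delta}e^{-\gamma s}\,ds+\int_0^\infty\! s^{\xi-1-\delta+\frac12}e^{-\gamma s}\,ds\Bigr],
\]
and both integrals are finite (Gamma-type integrals, convergent because $\xi-\delta>0$). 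Since $(1+|t|)^{1/2}\le 1+|t|^{1/2}$ this yields \eqref{eqn:A-sublinear} with $C_2$ depending only on $C,\gamma,\xi,\delta$, and in particular $\|\hat z(\tilde\omega)\|_{C_{1/2}(\mathbb{R},\mathrm{X})}\le C_2\|\tilde\omega\|$.

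It remains to check that $t\mapsto\hat z(\tilde\omega)(t)$ is continuous. Fix $t$ and take $t_n\to t$ with all $t_n$ in some interval $[-M,M]$; the integrands converge pointwise in $s$ (continuity of $\tilde\omega$ and of $s\mapsto A^{1+\delta}e^{-sA}$) and are dominated by the fixed integrable function $C\|\tilde\omega\|\,s^{\xi-1-\delta}e^{-\gamma s}(1+2M+s)^{1/2}$, so dominated convergence gives $\hat z(\tilde\omega)(t_n)\to\hat z(\tilde\omega)(t)$. Hence $\hat z(\tilde\omega)\in C_{1/2}(\mathbb{R},\mathrm{X})$, $\hat z$ is linear and bounded with norm $\le C_2$, and unique as explained above; and since $\Omega(\xi,\mathrm{X})\subset\Cx$, restricting $\hat z$ gives the final assertion with no extra work. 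I expect the only mildly delicate point to be the bookkeeping in the growth estimate: arranging things so that the weight $(1+|t|+|r|)^{1/2}$ coming from the $\Cx$-norm recombines, after the substitution $s=t-r$, into exactly the $(1+|t|^{1/2})$ growth required for membership in $C_{1/2}(\mathbb{R},\mathrm{X})$ — everything else is a routine use of \eqref{eqn:A-1} and dominated convergence.
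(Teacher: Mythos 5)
Your proof is correct: after the substitution $s=t-r$, combining \eqref{eqn:A-1} with the weighted H\"older seminorm of $\Cx$ gives an integrand bounded by $C\|\tilde\omega\|\,s^{\xi-1-\delta}e^{-\gamma s}(1+|t|+|t-s|)^{1/2}$, which is integrable near $0$ because $\xi>\delta$ and at infinity because of the exponential, and your splitting $(1+|t|+|t-s|)^{1/2}\le\sqrt{2}\,(1+|t|)^{1/2}+s^{1/2}$ plus dominated convergence yield \eqref{eqn:A-sublinear}, continuity, boundedness, and the (trivial) uniqueness. The paper proves this proposition only by citing Proposition 6.2 of \cite{Brz_Li_2006b} and treats the last assertion by restricting to the closed subspace $\Omega(\xi,\mathrm{X})$, exactly as you do, so your argument is the same direct verification and merely supplies the details delegated to the reference.
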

\begin{proof} See Proposition 6.2 in \cite{Brz_Li_2006b}. The last part follows as $\Omega(\xi, \mathrm{X})$ is a closed
subset of $\Cx$. \end{proof}

The following results are respectively Corollary 6.4, Theorem 6.6
and Corollary 6.8 from \cite{Brz_Li_2006b}.

\begin{corollary}\label{cor:z-2}
Under the assumptions of Proposition \ref{prp:z-1}, for all $-\infty<a <b < \infty$ and $t\in\mathbb{R}$, the map
\begin{equation}
\label{eqn:z-continuous-trajectories}  \Cx \ni \tilde{\omega}
\mapsto \left( {\hat z}(\tilde{\omega})(t), {\hat
z}(\tilde{\omega})\right)  \in \mathrm{X}\times  L^4(a,
b;\mathrm{X})
\end{equation}
is continuous.
Moreover, the above result is valid with the  space $\Cx$ being
replaced by $\Omega(\xi,\mathrm{X})$.
\end{corollary}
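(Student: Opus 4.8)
The plan is to observe that the map in question is a composition of bounded linear maps, hence continuous (indeed Lipschitz). By Proposition~\ref{prp:z-1} the map $\hat z$ is linear, and estimate \eqref{eqn:A-sublinear} reads precisely
$$\Vert \hat z(\tilde\omega)\Vert_{C_{1/2}(\mathbb{R},\mathrm{X})}\le C_2\,\Vert\tilde\omega\Vert,\qquad\tilde\omega\in\Cx,$$
so that $\hat z:\Cx\to C_{1/2}(\mathbb{R},\mathrm{X})$ is bounded linear (and likewise, by the last sentence of Proposition~\ref{prp:z-1}, from $\Omega(\xi,\mathrm{X})$ into $C_{1/2}(\mathbb{R},\mathrm{X})$). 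It then suffices to check that the two coordinate projections
$$\Phi_1:\omega\mapsto\omega(t),\qquad\Phi_2:\omega\mapsto\omega,$$
from $C_{1/2}(\mathbb{R},\mathrm{X})$ into $\mathrm{X}$ and into $L^4(a,b;\mathrm{X})$ respectively, are bounded linear; the map of the statement is then $\tilde\omega\mapsto\big(\Phi_1(\hat z(\tilde\omega)),\Phi_2(\hat z(\tilde\omega))\big)$.

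For $\Phi_1$ I would read off from the definition of $\Vert\cdot\Vert_{C_{1/2}(\mathbb{R},\mathrm{X})}$ that $\vert\omega(t)\vert_{\mathrm{X}}\le(1+\vert t\vert^{1/2})\,\Vert\omega\Vert_{C_{1/2}(\mathbb{R},\mathrm{X})}$, so $\Phi_1$ is bounded with norm at most $1+\vert t\vert^{1/2}$. For $\Phi_2$, note first that $\omega\in C_{1/2}(\mathbb{R},\mathrm{X})$ is continuous, hence its restriction to the compact interval $[a,b]$ is bounded and Bochner measurable, and then estimate
$$\int_a^b\vert\omega(s)\vert_{\mathrm{X}}^4\,ds\le\Vert\omega\Vert_{C_{1/2}(\mathbb{R},\mathrm{X})}^4\int_a^b\big(1+\vert s\vert^{1/2}\big)^4\,ds,$$
where the last integral is finite because $a$ and $b$ are real numbers; hence $\Phi_2$ is bounded linear. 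Composing, the map of the statement is bounded linear from $\Cx$ into $\mathrm{X}\times L^4(a,b;\mathrm{X})$ and therefore continuous. The assertion with $\Cx$ replaced by $\Omega(\xi,\mathrm{X})$ follows in the same way, since $\hat z$ restricted to $\Omega(\xi,\mathrm{X})$ still takes values in $C_{1/2}(\mathbb{R},\mathrm{X})$ with the same bound; equivalently, $\Omega(\xi,\mathrm{X})$ is a closed linear subspace of $\Cx$ and the restriction of a continuous map to it is continuous.

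There is essentially no obstacle in this argument: it is purely a ``bounded linear, hence continuous'' matter, built on the quantitative bound \eqref{eqn:A-sublinear} already established in Proposition~\ref{prp:z-1}. The only step that is not completely formal is the finiteness of $\int_a^b(1+\vert s\vert^{1/2})^4\,ds$, which is exactly why the statement is restricted to finite $a<b$, and why continuity of the first coordinate is asserted for a fixed $t$ rather than uniformly in $t\in\mathbb{R}$.
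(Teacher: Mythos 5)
Your argument is correct: Proposition \ref{prp:z-1} already gives that $\hat z$ is a \emph{linear} map bounded from $\Cx$ (and from $\Omega(\xi,\mathrm{X})$) into $C_{1/2}(\mathbb{R},\mathrm{X})$ via \eqref{eqn:A-sublinear}, and composing with the evaluation map at the fixed time $t$ (norm at most $1+\vert t\vert^{1/2}$) and the restriction map into $L^4(a,b;\mathrm{X})$ (bounded since $\int_a^b(1+\vert s\vert^{1/2})^4\,ds<\infty$) yields continuity, with the closed-subspace remark disposing of the $\Omega(\xi,\mathrm{X})$ case. The paper itself gives no proof here — it simply quotes Corollary 6.4 of \cite{Brz_Li_2006b} — and your ``bounded linear, hence continuous'' composition is precisely the standard argument behind that result, so nothing is missing.
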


\begin{theorem}\label{thm:z-theta}
Under the assumptions of Proposition \ref{prp:z-1}, for any
$\omega \in \Cx$,
\begin{equation}
\label{eqn:cocycle} {\hat z}(\vartheta_s\omega)(t)={\hat
z}(\omega)(t+s), \quad t, s \in \mathbb{R}.
\end{equation}
In particular, for any $\omega \in \Omega$ and all $t, s \in
\mathbb{R}$, ${\hat z}(\vartheta_s\omega)(0)={\hat z}(\omega)(s)$.
\end{theorem}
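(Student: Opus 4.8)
The plan is to obtain the cocycle identity \eqref{eqn:cocycle} by a direct change of variables in the integral representation \eqref{eqn:A-def-z} of $\hat{z}$, and then to read off the particular case by taking $t=0$.

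First I would record the elementary fact that the shift flow acts trivially on increments: by the definition $\vartheta_s\omega(\cdot)=\omega(\cdot+s)-\omega(s)$, one has, for all $r,s,t\in\mathbb{R}$,
$$(\vartheta_s\omega)(t)-(\vartheta_s\omega)(r)=\big(\omega(t+s)-\omega(s)\big)-\big(\omega(r+s)-\omega(s)\big)=\omega(t+s)-\omega(r+s),$$
so the additive correction $-\omega(s)$ cancels. I would also note that, since $\Cx$ (and its closed subspace $\Omega(\xi,\mathrm{X})$) is invariant under $\vartheta$, the path $\vartheta_s\omega$ again lies in $\Cx$ whenever $\omega$ does, so Proposition \ref{prp:z-1} applies to $\vartheta_s\omega$ and $\hat{z}(\vartheta_s\omega)(t)$ is given by the absolutely convergent integral in \eqref{eqn:A-def-z}.

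Second, I would substitute the increment identity into \eqref{eqn:A-def-z} and perform the change of variable $\rho=r+s$. Since $t-r=(t+s)-\rho$, the lower limit $-\infty$ is unchanged while the upper limit $t$ becomes $t+s$, giving
$$\hat{z}(\vartheta_s\omega)(t)=\int_{-\infty}^{t}A^{1+\delta}e^{-(t-r)A}\big(\omega(t+s)-\omega(r+s)\big)\,dr=\int_{-\infty}^{t+s}A^{1+\delta}e^{-((t+s)-\rho)A}\big(\omega(t+s)-\omega(\rho)\big)\,d\rho,$$
and the right-hand side is precisely $\hat{z}(\omega)(t+s)$, which is \eqref{eqn:cocycle}. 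To make the substitution in the improper integral rigorous I would first write $\int_{-\infty}^{t}=\lim_{N\to\infty}\int_{-N}^{t}$, carry out the (elementary) change of variable on each compact interval $[-N,t]$, which becomes $[-N+s,t+s]$, and then pass to the limit $N\to\infty$; the convergence is guaranteed by the estimates underlying Proposition \ref{prp:z-1}.

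Finally, setting $t=0$ in \eqref{eqn:cocycle} yields $\hat{z}(\vartheta_s\omega)(0)=\hat{z}(\omega)(s)$ for all $s$, and the whole argument goes through verbatim with $\Cx$ replaced by $\Omega(\xi,\mathrm{X})$. I do not anticipate any real difficulty: the only point deserving a line of justification is the legitimacy of the change of variables in the improper integral, and that is immediate from the absolute convergence established in Proposition \ref{prp:z-1}.
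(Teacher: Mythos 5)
Your argument is correct: the shift $\vartheta_s$ leaves increments unchanged, $\vartheta_s\omega$ stays in $\Cx$ (with $\vartheta_s\omega(0)=0$), and the substitution $\rho=r+s$ in the absolutely convergent integral \eqref{eqn:A-def-z} gives \eqref{eqn:cocycle}, with the particular case following by setting $t=0$. The paper itself offers no proof, quoting the statement as Theorem 6.6 of \cite{Brz_Li_2006b}; your change-of-variables computation is precisely the standard argument behind that result, so there is nothing to object to.
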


 For $\zeta\in C_{1/2}(\mathbb{R},\mathrm{X})$ we put%
$$(\tau_s \zeta)=\zeta(t+s), \quad t,s \in \mathbb{R}.
$$
Thus, $\tau_s$ is a a linear and bounded map from
$C_{1/2}(\mathbb{R},\mathrm{X})$ into itself. Moreover, the family
$(\tau_s)_{s \in \mathbb{R}}$ is a $C_0$ group  on
$C_{1/2}(\mathbb{R},\mathrm{X})$.

Using this notation   Theorem \ref{thm:z-theta} can be rewritten
in the following way.
\begin{corollary}\label{cor:tau-theta} For $s \in \mathbb{R}$ $\tau_s\circ {\hat z}={\hat z}\circ \vartheta_s$, i.e.
$$ \tau_s\big({\hat z}(\omega)\big)={\hat z}\big(\vartheta_s(\omega)\big), \;\omega \in \Cx.$$
\end{corollary}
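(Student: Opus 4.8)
The plan is to observe that this corollary is just Theorem \ref{thm:z-theta} rephrased in terms of the translation group $(\tau_s)_{s\in\mathbb{R}}$, so the proof reduces to unwinding the two definitions and matching them up pointwise in the time variable.

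First I would fix $s\in\mathbb{R}$ and $\omega\in\Cx$. Since $\Cx$ is $\vartheta_s$-invariant we have $\vartheta_s\omega\in\Cx$, so by Proposition \ref{prp:z-1} both $\hat z(\omega)$ and $\hat z(\vartheta_s\omega)$ are well-defined elements of $C_{1/2}(\mathbb{R},\mathrm{X})$; moreover $\tau_s$ maps $C_{1/2}(\mathbb{R},\mathrm{X})$ into itself (as recorded just before the statement), so $\tau_s(\hat z(\omega))$ is also a well-defined element of that space. Hence it suffices to verify that the two functions $\tau_s(\hat z(\omega))$ and $\hat z(\vartheta_s\omega)$ agree at every $t\in\mathbb{R}$.

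To that end, fix $t\in\mathbb{R}$. By the definition of $\tau_s$ one has $\big(\tau_s(\hat z(\omega))\big)(t)=\hat z(\omega)(t+s)$, and by Theorem \ref{thm:z-theta} one has $\hat z(\omega)(t+s)=\hat z(\vartheta_s\omega)(t)$. Chaining these identities gives $\big(\tau_s(\hat z(\omega))\big)(t)=\hat z(\vartheta_s\omega)(t)$ for every $t\in\mathbb{R}$, which is precisely the assertion $\tau_s\circ\hat z=\hat z\circ\vartheta_s$ on $\Cx$. The identical argument, using the corresponding parts of Proposition \ref{prp:z-1} and Theorem \ref{thm:z-theta}, yields the same conclusion with $\Cx$ replaced by $\Omega(\xi,\mathrm{X})$.

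There is no genuine obstacle here: the only point requiring a moment's care is the bookkeeping — making sure the shift "$+s$" lands on the time argument of $\hat z(\omega)$ rather than on the argument of $\omega$ itself, and that $\tau_s$ is indeed a self-map of $C_{1/2}(\mathbb{R},\mathrm{X})$ so that both sides live in the same space. Accordingly I would expect the written proof to be a single line.
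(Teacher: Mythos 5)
Your proposal is correct and coincides with the paper's treatment: the paper presents this corollary as nothing more than Theorem \ref{thm:z-theta} rewritten with the notation $(\tau_s\zeta)(t)=\zeta(t+s)$, which is exactly the pointwise unwinding you carry out. The only addition you make (checking that both sides are well-defined in $C_{1/2}(\mathbb{R},\mathrm{X})$) is harmless bookkeeping that the paper leaves implicit.
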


Note that   for any $\nu >0$ and $\alpha \geq 0$, $(\nu A+\alpha
I)^\delta: \mathrm{E} \to \mathrm{X}$ is a bounded linear map and
so is the induced map $\Oe \ni \omega \mapsto (\nu A+\alpha
I)^\delta \omega \in \Ox$.

For $\delta$ as in the Assumption \ref{ass:A-01}, $\alpha \geq 0$,
$\nu >0$, $\xi \in (\delta,1/2)$ and $\omega \in \Ce$ (so that
$(\nu A+\alpha I)^{-\delta}\omega
 \in \Cx$),  we put
 $z_\alpha(\omega):={\hat z}\big((\nu A+\alpha I)^{-\delta}\omega\big)\in
C_{1/2}(\mathbb{R},\mathrm{X})$. Hence,  for any $t\geq 0$,
\begin{eqnarray}
z_\alpha(\omega)(t)&:=&\int_{-\infty}^t(\nu A+\alpha I)^{1+\delta}
e^{-(t-r)(\nu A+\alpha I)} \nonumber\\
&& \hspace{2truecm} \left[ (\nu A+\alpha
I)^{-\delta}\omega(t)-(\nu A+\alpha I)^{-\delta}\omega (r)\right]
\, dr
\label{eqn-z_alpha}\\
&=&\int_{-\infty}^t (\nu A+\alpha I)^{1+\delta} e^{-(t-r)(\nu
A+\alpha I)}\big((\nu A+\alpha I)^{-\delta}\vartheta_r\omega\big)(t-r) \,
dr. \nonumber
\end{eqnarray}

It follows  from Theorem \ref{thm:z-theta} that
\begin{equation}\label{eqn_z-stat}
z_\alpha(\vartheta_s\omega)(t)=z_\alpha(\omega)(t+s), \quad \omega
\in \Cx,\;t, s \in \mathbb{R}.
\end{equation}

Let us make the following crucial observation, see Proposition 6.10 in
\cite{Brz_Li_2006b}: the process $z_\alpha$ is an $X$-valued  stationary and
ergodic. Hence,  by the Strong Law of Large Numbers,
see  \cite{DaPZ-96} for a similar
argument,
\begin{equation}
\label{eqn:slln} \lim_{t\rightarrow
\infty}\frac{1}{t}\int_{-t}^{0}\vert
z_\alpha(s)\vert_{\mathrm{X}}^2 \,ds =\mathbb{E}\vert z_\alpha(0)
\vert_{\mathrm{X}}^2, \quad \mathbb{P}-\hbox{a.s. on }\Cx. \end{equation}

Therefore, it follows from \cite[Proposition 6.10]{Brz_Li_2006b} that we
find $\alpha_0$ such that for all $\alpha\geq \alpha_0$,
\begin{equation}\label{universalC}
\mathbb{E}\vert z_\alpha(0)
\vert_{\mathrm{X}} ^2 < \frac{\nu^2\lambda_1}{6C^2},
\end{equation}
where $\lambda_1$ is the constant appearing in the Poincar{\'e} inequality  \eqref{2.0} and
$C>0$ is a certain universal constant.

By $\Omega_\alpha(\xi,\mathrm{E})$ we denote the set of those
$\omega \in \Omega(\xi,\mathrm{E})$ for which the equality
\eqref{eqn:slln} holds true. As mentioned above, the set
$\Omega_\alpha(\xi,\mathrm{E})$ is $\mathbb{P}$-conegligible. Moreover, in view of
 Corollary \ref{cor:tau-theta} that the set $\Omega_\alpha(\xi,\mathrm{E})$  is
invariant with respect to the flow $\vartheta$, i.e. for all
$\alpha \geq 0$ and all $t \in \mathbb{R}$,
$\vartheta_t\big(\Omega_\alpha(\xi,\mathrm{E})\big) \subset
\Omega_\alpha(\xi,\mathrm{E})$. Therefore, we fix
\begin{equation*}\label{cond-xi}\xi \in
(\delta,\frac12)\end{equation*}
 and set
\begin{equation*}\label{def-Omegahat}\Omega:= {\hat \Omega}(\xi,\mathrm{E}) = \bigcap_{n=1}^\infty \Omega_n(\xi,\mathrm{E}).
\end{equation*}
For reasons that will become clear later we
 take   as a model of a  metric DS the  quadruple
$\left(\hat{\Omega}(\xi,\mathrm{E}),\hat{\mathcal{F}},\hat{\mathbb{P}},\hat{\vartheta}\right)$
where $\hat{\mathcal{F}}$, $\hat{\mathbb{P}}$ and
$\hat{\vartheta}$ are respectively the natural restrictions of
$\mathcal{F}$, $\mathbb{P}$ and $\vartheta$ to
$\hat{\Omega}(\xi,\mathrm{E})$.

\begin{proposition}
\label{prop:metricDS} The quadruple
$\left(\hat{\Omega}(\xi,\mathrm{E}),\hat{\mathcal{F}},\hat{\mathbb{P}},\hat{\vartheta}\right)$
is a  metric DS.  For each $\omega \in
\hat{\Omega}(\xi,\mathrm{E})$ the limit in \eqref{eqn:slln}
exists. \end{proposition}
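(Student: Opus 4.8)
The plan is to check, one at a time, the axioms defining a metric dynamical system (Definition \ref{Def-DS}), transferring each of them from the full path space $\Omega(\xi,\mathrm{E})$ — where they are either already recorded above or are entirely standard — to the $\vartheta$-invariant, $\mathbb{P}$-conegligible subset $\hat\Omega(\xi,\mathrm{E})=\bigcap_{n=1}^\infty\Omega_n(\xi,\mathrm{E})$.

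First I would note that $\hat\Omega(\xi,\mathrm{E})$ is a countable intersection of sets of full $\mathbb{P}$-measure: each $\Omega_\alpha(\xi,\mathrm{E})$ is $\mathbb{P}$-conegligible because $z_\alpha$ is an $\mathrm{X}$-valued stationary and ergodic process, so that the Strong Law of Large Numbers \eqref{eqn:slln} holds almost surely. Hence $\hat{\mathbb{P}}(\hat\Omega(\xi,\mathrm{E}))=1$, and the trace $\sigma$-algebra $\hat{\mathcal F}$ together with the restriction $\hat{\mathbb{P}}$ is a probability space. The last assertion of the proposition is then immediate from the very definition of $\Omega_n(\xi,\mathrm{E})$: for $\omega\in\hat\Omega(\xi,\mathrm{E})$ the limit in \eqref{eqn:slln} exists for every integer $\alpha$, in particular for any integer $\alpha=\alpha_0$ large enough for \eqref{universalC} to hold.

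Next I would verify that $\hat\vartheta$ is a measurable DS on $(\hat\Omega(\xi,\mathrm{E}),\hat{\mathcal F})$. Well-definedness is precisely the invariance $\vartheta_t(\Omega_\alpha(\xi,\mathrm{E}))\subset\Omega_\alpha(\xi,\mathrm{E})$, valid for all $t\in\mathbb{R}$ and all $\alpha\geq 0$ (a consequence of Corollary \ref{cor:tau-theta}); intersecting over $\alpha=n$ gives $\hat\vartheta_t(\hat\Omega(\xi,\mathrm{E}))\subset\hat\Omega(\xi,\mathrm{E})$, and applying this to both $t$ and $-t$ upgrades it to $\hat\vartheta_t(\hat\Omega(\xi,\mathrm{E}))=\hat\Omega(\xi,\mathrm{E})$, so each $\hat\vartheta_t$ is a bijection with inverse $\hat\vartheta_{-t}$. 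The identity $\hat\vartheta_0=\mathrm{id}$ and the group law $\hat\vartheta_{t+s}=\hat\vartheta_t\circ\hat\vartheta_s$ are inherited verbatim from the corresponding properties of the shift $\vartheta_t\omega(\cdot)=\omega(\cdot+t)-\omega(t)$ on $\Omega(\xi,\mathrm{E})$, and the joint measurability of $(t,\omega)\mapsto\vartheta_t\omega$ on $\mathbb{R}\times\Omega(\xi,\mathrm{E})$ — standard for the shift flow and consistent with the $C_0$-group $(\tau_s)_{s\in\mathbb{R}}$ discussed above — restricts to $(\mathcal{B}(\mathbb{R})\otimes\hat{\mathcal F},\hat{\mathcal F})$-measurability on $\mathbb{R}\times\hat\Omega(\xi,\mathrm{E})$.

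Finally, to promote $\hat\vartheta$ to a metric DS I would check that each $\hat\vartheta_t$ preserves $\hat{\mathbb{P}}$. Since $\vartheta_t$ preserves $\mathbb{P}$ on $\Omega(\xi,\mathrm{E})$ (recorded above) and $\hat\Omega(\xi,\mathrm{E})$ is both $\vartheta$-invariant and of full measure, every $B\in\hat{\mathcal F}$ can be written as $B=\tilde B\cap\hat\Omega(\xi,\mathrm{E})$ with $\tilde B\in\mathcal F$, and then, using the bijectivity of $\hat\vartheta_t$ on $\hat\Omega(\xi,\mathrm{E})$ and $\mathbb{P}(\hat\Omega(\xi,\mathrm{E}))=1$,
$$\hat{\mathbb{P}}\big(\hat\vartheta_t^{-1}B\big)=\mathbb{P}\big(\vartheta_t^{-1}\tilde B\cap\hat\Omega(\xi,\mathrm{E})\big)=\mathbb{P}\big(\vartheta_t^{-1}\tilde B\big)=\mathbb{P}(\tilde B)=\hat{\mathbb{P}}(B).$$
I do not expect a genuine obstacle: all the analytic content — stationarity and ergodicity of $z_\alpha$, continuity/measurability of the shift, $\mathbb{P}$-invariance — is already available from \cite{Brz_Li_2006b} and the discussion preceding the statement, so the proof amounts to the routine verification that the measurable-DS axioms and $\mathbb{P}$-preservation descend to the trace of a $\vartheta$-invariant set of full measure. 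The only point needing a moment's care is the two-sided invariance $\hat\vartheta_t(\hat\Omega(\xi,\mathrm{E}))=\hat\Omega(\xi,\mathrm{E})$, which makes each $\hat\vartheta_t$ an honest bijection rather than merely an endomorphism, and hence guarantees that $\hat{\mathbb{P}}$ is genuinely preserved.
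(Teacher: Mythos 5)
Your proof is correct and follows essentially the same route as the paper, which offers no separate proof but relies on exactly the facts you verify: each $\Omega_n(\xi,\mathrm{E})$ is $\mathbb{P}$-conegligible by the strong law of large numbers for the stationary ergodic process $z_\alpha$, it is $\vartheta$-invariant by Corollary \ref{cor:tau-theta}, and the shift preserves $\mathbb{P}$, so the restrictions $\hat{\mathcal F}$, $\hat{\mathbb{P}}$, $\hat\vartheta$ inherit the metric-DS axioms and the limit in \eqref{eqn:slln} holds by the very definition of $\hat\Omega(\xi,\mathrm{E})$. Your extra observation that applying one-sided invariance to both $t$ and $-t$ yields $\hat\vartheta_t(\hat\Omega(\xi,\mathrm{E}))=\hat\Omega(\xi,\mathrm{E})$ is a correct (and welcome) tidying of a point the paper leaves implicit.
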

Let us now formulate an immediate and important consequence of the
above result in which  $C>0$ is the constant appearing in \eqref{universalC}.

\begin{corollary}\label{cor-ergodicestimates}
For each $\omega \in \hat{\Omega}(\xi,\mathrm{E})$ there exits
$t_0=t_o(\omega)\geq 0$, such that
\begin{equation}
\label{eqn_8.20} \frac{3C^2}{\nu}\int_{-t}^{0}\vert
z_\alpha(s)\vert_{\mathrm{X}}^2 \,ds  < \frac{\nu \lambda_1 t}{2},
\; t\geq t_0, \end{equation}
\end{corollary}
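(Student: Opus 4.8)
The plan is to derive \eqref{eqn_8.20} as an immediate consequence of the ergodic limit \eqref{eqn:slln} combined with the smallness condition \eqref{universalC}. First I would fix $\alpha \geq \alpha_0$ --- recalling that $\alpha$ is taken to be a natural number, so that $\hat{\Omega}(\xi,\mathrm{E}) = \bigcap_{n=1}^\infty \Omega_n(\xi,\mathrm{E}) \subset \Omega_\alpha(\xi,\mathrm{E})$ --- and fix an arbitrary $\omega \in \hat{\Omega}(\xi,\mathrm{E})$. By Proposition \ref{prop:metricDS} (equivalently, by the very definition of $\Omega_\alpha(\xi,\mathrm{E})$) the limit in \eqref{eqn:slln} exists at this particular $\omega$:
\[
\lim_{t\to\infty} \frac1t \int_{-t}^0 \vert z_\alpha(s)\vert_{\mathrm{X}}^2 \, ds \;=\; \mathbb{E}\vert z_\alpha(0)\vert_{\mathrm{X}}^2 .
\]

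Next I would introduce the slack $\varepsilon := \dfrac{\nu^2\lambda_1}{6C^2} - \mathbb{E}\vert z_\alpha(0)\vert_{\mathrm{X}}^2$, which is strictly positive by \eqref{universalC}. Applying the definition of the limit above with this $\varepsilon$ yields a threshold $t_0 = t_0(\omega) \geq 0$ such that
\[
\frac1t \int_{-t}^0 \vert z_\alpha(s)\vert_{\mathrm{X}}^2\,ds \;<\; \mathbb{E}\vert z_\alpha(0)\vert_{\mathrm{X}}^2 + \varepsilon \;=\; \frac{\nu^2\lambda_1}{6C^2}, \qquad t \geq t_0,
\]
and multiplying through by the positive quantity $3C^2 t/\nu$ gives precisely \eqref{eqn_8.20}.

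There is no real technical obstacle here: the only subtlety worth spelling out is the passage from the $\mathbb{P}$-a.s.\ statement \eqref{eqn:slln} to a genuinely pointwise statement, which is legitimate exactly because $\hat{\Omega}(\xi,\mathrm{E})$ was constructed as the countable intersection of the conegligible sets $\Omega_n(\xi,\mathrm{E})$ on which the Birkhoff-type limit holds --- as recorded in Proposition \ref{prop:metricDS}. I would also flag that the constant $C$ in \eqref{eqn_8.20} is the same universal constant as in \eqref{universalC}, and it is precisely in order to leave the room $\varepsilon > 0$ used above that $\alpha_0$ was chosen so that \eqref{universalC} holds with \emph{strict} inequality.
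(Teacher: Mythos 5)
Your argument is correct and is exactly the derivation the paper leaves implicit when it calls Corollary \ref{cor-ergodicestimates} an ``immediate consequence'' of Proposition \ref{prop:metricDS}: the pointwise validity of the ergodic limit \eqref{eqn:slln} on $\hat{\Omega}(\xi,\mathrm{E})$, the strict inequality \eqref{universalC} providing the slack $\varepsilon>0$, and multiplication by $3C^2t/\nu$. Nothing further is needed.
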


Finally we define a map $\varphi=\varphi_\alpha : \mathbb{R}_+ \times
\Omega \times \mathrm{H} \to \mathrm{H}$ by
\begin{equation}
\label{eqn:varphi} (t, \omega, x) \mapsto
v\big(t,z_\alpha(\omega)\big)\big(x-z(\omega)(0)\big)+z_\alpha(\omega)(t) \in
\mathrm{H},
\end{equation}
where $v(t,v_0)$, $t \geq 0$,   is  a  solution  to the following
problem
\begin{eqnarray}
\frac{dv}{dt}&=&-\nu \mathrm{A}v-B(v)-B(v,z)-B(z,v)-B(z)+\alpha z
+f, \label{eqn:4.10} \\ \hspace{0.5truecm} v(0)&=&v_0.
\label{eqn:4.10-a}
\end{eqnarray}
Because of Theorem 4.5 from \cite{Brz_Li_2006b}, which, for the completeness sake,  we state below as Theorem \ref{thm:4.2}, and because
$z_\alpha(\omega) \in C_{1/2}(\mathbb{R},\mathrm{X})$,
$z_\alpha(\omega)(0)$ is a well defined element of $\mathrm{H}$. Consequently,
the map $\varphi_\alpha$ is well defined.

\begin{definition}
Suppose that $z\in L^4_{\mathrm{loc}}\big([0,\infty); \mathbb{L}^4(\mathcal{O})\big)
\cap L^4_{\mathrm{loc}}([0,\infty);\mathrm{V})$, $f\in
\mathrm{V}^\prime$ and $v_0\in \mathrm{H}$. A function $v \in
C([0,\infty);\mathrm{H})\cap
L_{\mathrm{loc}}^2([0,\infty);\mathrm{\mathrm{V}^\prime})\cap
L^4_{\mathrm{loc}}\big([0,\infty); \mathbb{L}^4(\mathcal{O})\big)$ is a solution to
problem (\ref{eqn:4.10})-(\ref{eqn:4.10-a}) iff $v(0)=v_0$  and
(\ref{eqn:4.10}) holds in the weak sense, i.e. for any $\varphi\in
\mathrm{V}$
\begin{equation}\label{eqn:4.14}
\frac{d}{dt}(v(t),\varphi)=-\nu \big(\!\big(v(t),\varphi\big)\!\big)-b\big(v(t)
+z(t),\varphi,v(t) + z(t)\big)   +(\alpha z(t) +f,\varphi),
\end{equation}in the  distributions sense on  $(0,\infty)$. \end{definition}

\begin{theorem}\label{thm:4.2}
Assume that $\alpha \ge 0$,  $v_0\in
\mathrm{H}$, $f\in \mathrm{V}^\prime$ and\\ $z\in
L^4_{\mathrm{loc}}\big([0,\infty);\mathbb{L}^4(\mathcal{O})\big)\cap
L^2_{\mathrm{loc}}([0,\infty); \mathrm{V}^\prime)$.
\begin{enumerate}
\item[(i)]  Then there exists a unique solution $v$ of problem
(\ref{eqn:4.10})-(\ref{eqn:4.10-a}). \item[(ii)] If  in addition,
$v_0 \in \mathrm{V}$, $f\in \mathrm{H}$ and $z\in
C(\mathbb{R};V)\cap L^2_{\mathrm{loc}}\big(\mathbb{R};D(\mathrm{A})\big)$,
then $v \in C([0, \infty);\mathrm{V}) \cap
L_{\mathrm{loc}}^2\big([0,\infty);D(\mathrm{A})\big)$.
\end{enumerate}
\end{theorem}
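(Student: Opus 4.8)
This is \cite[Theorem~4.5]{Brz_Li_2006b}, so here I only describe the scheme I would follow. For part~(i) the plan is the Galerkin method in the eigenbasis $\{e_j\}_{j\ge1}$ of $\mathrm{A}$: with $\mathrm{H}_n=\mathrm{span}\{e_1,\dots,e_n\}$ and $\mathrm{P}_n$ the $\mathrm{H}$-orthogonal projection onto $\mathrm{H}_n$, solve for $j=1,\dots,n$
\[
\tfrac{d}{dt}(v_n,e_j)=-\nu\big(\!\big(v_n,e_j\big)\!\big)-b(v_n+z,e_j,v_n+z)+\langle\alpha z+f,e_j\rangle,\qquad v_n(0)=\mathrm{P}_nv_0 ,
\]
the nonlinearity being meaningful by Lemma~\ref{lem:form-b} ($v_n+z\in\mathbb{L}^4(\mathcal{O})$, $e_j\in\mathrm{V}$); the system has polynomial right-hand side with $L^1_{\mathrm{loc}}$-in-$t$ coefficients, so a local solution exists by Carath\'eodory's theorem. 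The a~priori estimate comes from testing with $v_n$: since $v_n+z$ is divergence free (in the application $z(t)\in\mathrm{H}\cap\mathbb{L}^4(\mathcal{O})$), the extension of the first line of \eqref{eqn:4.0a} gives $b(v_n+z,v_n,v_n)=0$, hence $\tfrac12\tfrac{d}{dt}|v_n|^2+\nu\|v_n\|^2=-b(v_n+z,v_n,z)+\langle\alpha z+f,v_n\rangle$; estimating the right side by Lemma~\ref{lem:form-b}, the Ladyzhenskaya inequality $|v_n|_{\mathbb{L}^4(\mathcal{O})}^2\le2^{1/2}|v_n|\,\|v_n\|$ from \eqref{eqn:4.0}, $|\langle\alpha z,v_n\rangle|\le\alpha|z|_{V^\prime}\|v_n\|$, $|\langle f,v_n\rangle|\le|f|_{V^\prime}\|v_n\|$ and Young's inequality yields
\[
\tfrac{d}{dt}|v_n|^2+\nu\|v_n\|^2\le C\big(1+|v_n|^2\big)|z|_{\mathbb{L}^4(\mathcal{O})}^4+C\alpha^2|z|_{V^\prime}^2+C|f|_{V^\prime}^2 .
\]
As $|z|_{\mathbb{L}^4(\mathcal{O})}^4,|z|_{V^\prime}^2\in L^1_{\mathrm{loc}}([0,\infty))$, Gronwall gives, uniformly in $n$, bounds for $v_n$ in $L^\infty_{\mathrm{loc}}([0,\infty);\mathrm{H})\cap L^2_{\mathrm{loc}}([0,\infty);\mathrm{V})$ (so each $v_n$ is global), hence by Ladyzhenskaya in $L^4_{\mathrm{loc}}([0,\infty);\mathbb{L}^4(\mathcal{O}))$, and, from the equation together with $\|B(v_n+z)\|_{\mathrm{V}^\prime}\le C|v_n+z|_{\mathbb{L}^4(\mathcal{O})}^2$ and the fact that $\mathrm{P}_n$ is a contraction on $\mathrm{V}^\prime$, for $v_n^\prime$ in $L^2_{\mathrm{loc}}([0,\infty);\mathrm{V}^\prime)$.

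The step I expect to be the main obstacle is the passage to the limit, since on an unbounded $\mathcal{O}$ the embedding $\mathrm{V}\hookrightarrow\mathrm{H}$ is not compact and Aubin--Lions cannot be used globally. Instead I would apply Aubin--Lions on each bounded piece $\mathcal{O}_R=\mathcal{O}\cap\{|x|<R\}$, where $H^1(\mathcal{O}_R)\hookrightarrow L^2(\mathcal{O}_R)$ is compact, and pass to a diagonal subsequence with $v_n\to v$ strongly in $L^2_{\mathrm{loc}}([0,\infty);L^2_{\mathrm{loc}}(\mathcal{O}))$; combined with the uniform $L^4_{\mathrm{loc}}(\mathbb{L}^4(\mathcal{O}))$ bound this gives $v_n\otimes v_n\rightharpoonup v\otimes v$ weakly in $L^2_{\mathrm{loc}}([0,\infty);\mathbb{L}^2(\mathcal{O}))$, which is precisely what is needed to pass to the limit in $b(v_n+z,e_j,v_n+z)$ for each fixed $e_j$; the remaining linear terms pass by weak convergence and $v(0)=v_0$ is checked in the usual way. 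Since then $v\in L^2_{\mathrm{loc}}(\mathrm{V})$ and $v^\prime\in L^2_{\mathrm{loc}}(\mathrm{V}^\prime)$, the Lions--Magenes lemma (see e.g.\ \cite{Temam-97}) gives $v\in C([0,\infty);\mathrm{H})$, so $v$ is a solution.

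For uniqueness I would take two solutions $v_1,v_2$, set $w=v_1-v_2$, and use $w(0)=0$, $w^\prime=-\nu\mathrm{A}w-B(w,v_1+z)-B(v_2+z,w)$; testing with $w$ (legitimate as $w\in L^2_{\mathrm{loc}}(\mathrm{V})$, $w^\prime\in L^2_{\mathrm{loc}}(\mathrm{V}^\prime)$) and using $b(v_2+z,w,w)=0$,
\[
\tfrac12\tfrac{d}{dt}|w|^2+\nu\|w\|^2=-b(w,v_1+z,w)\le\tfrac{\nu}{2}\|w\|^2+C|w|^2\big(|v_1|_{\mathbb{L}^4(\mathcal{O})}^4+|z|_{\mathbb{L}^4(\mathcal{O})}^4\big)
\]
by Lemma~\ref{lem:form-b}, \eqref{eqn:4.0} and Young; since the coefficient of $|w|^2$ lies in $L^1_{\mathrm{loc}}([0,\infty))$, Gronwall forces $w\equiv0$.

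For part~(ii), with $v_0\in\mathrm{V}$, $f\in\mathrm{H}$, $z\in C(\mathbb{R};\mathrm{V})\cap L^2_{\mathrm{loc}}(\mathbb{R};D(\mathrm{A}))$, I would test the Galerkin system with $\mathrm{A}v_n\in\mathrm{H}_n$, split $B(v_n+z)=B(v_n)+B(v_n,z)+B(z,v_n)+B(z)$, and bound each term by the appropriate line of \eqref{eqn:4.0a} together with the two-dimensional interpolation $|\nabla u|_{\mathbb{L}^4(\mathcal{O})}\le C\|u\|^{1/2}|\mathrm{A}u|^{1/2}$ (from \eqref{eqn:4.3} and Ladyzhenskaya) and Young's inequality, absorbing all $|\mathrm{A}v_n|^2$ contributions on the left to reach
\[
\tfrac{d}{dt}\|v_n\|^2+\nu|\mathrm{A}v_n|^2\le g(t)\|v_n\|^2+h(t),
\]
where $g(t)=C\big(1+|v_n(t)|^2\|v_n(t)\|^2+\|z(t)\|_{\mathbb{L}^4(\mathcal{O})}^4\big)$ is in $L^1_{\mathrm{loc}}([0,\infty))$ uniformly in $n$ by the part-(i) bounds, and $h\in L^1_{\mathrm{loc}}([0,\infty))$ because $z\in C(\mathbb{R};\mathrm{V})\cap L^2_{\mathrm{loc}}(D(\mathrm{A}))$ and $f\in\mathrm{H}$. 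Gronwall then gives uniform bounds for $v_n$ in $L^\infty_{\mathrm{loc}}([0,\infty);\mathrm{V})\cap L^2_{\mathrm{loc}}([0,\infty);D(\mathrm{A}))$, inherited by $v$; and estimating each term of $v^\prime=-\nu\mathrm{A}v-B(v+z)+\alpha z+f$ in $\mathrm{H}$ gives $v^\prime\in L^2_{\mathrm{loc}}([0,\infty);\mathrm{H})$, whence $v\in C([0,\infty);\mathrm{V})$ by Lions--Magenes. Apart from the non-compactness, the recurring nuisance throughout is the low ($\mathbb{L}^4(\mathcal{O})$-only, in part~(i)) regularity of $z$, which forces the splitting of $B(v+z)$ and the systematic use of the extensions of $b$ in Lemma~\ref{lem:form-b}, so that identities such as $b(\zeta,w,w)=0$ must be recovered by density rather than quoted directly.
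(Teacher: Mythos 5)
First, note that the paper does not prove Theorem \ref{thm:4.2} at all: it is recalled verbatim from \cite[Theorem 4.5]{Brz_Li_2006b} ``for the completeness sake'', so simply citing that reference, as you do in your opening line, is exactly what the paper itself does; there is no internal proof to compare your sketch against.

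As for the sketch itself, its overall architecture (Galerkin approximation, $\mathrm{H}$--$\mathrm{V}$ energy estimate exploiting $b(u,v,v)=0$ recovered by density for the merely $\mathbb{L}^4$-valued $z$, local Aubin--Lions on $\mathcal{O}\cap\{|x|<R\}$ plus a diagonal argument to handle the non-compactness of $\mathrm{V}\hookrightarrow\mathrm{H}$, Gronwall for uniqueness, testing with $\mathrm{A}v_n$ for the $\mathrm{V}$-regularity) is the standard and correct route. However, there is one genuine gap: you set up the Galerkin scheme ``in the eigenbasis $\{e_j\}$ of $\mathrm{A}$'', and such a basis does not exist here. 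Precisely because $\mathcal{O}$ is unbounded and $\mathrm{V}\hookrightarrow\mathrm{H}$ is not compact (the very fact you invoke later), the Stokes operator does not have compact resolvent, its spectrum is in general not discrete, and there is no complete orthonormal system of eigenvectors in $\mathrm{H}$. This is not merely cosmetic, because several steps of your argument lean on the eigenbasis structure: the uniform boundedness of $\mathrm{P}_n$ on $\mathrm{V}'$ (used to bound $v_n'$ in $L^2_{\mathrm{loc}}(\mathrm{V}')$) holds because $\mathrm{P}_n$ commutes with $\mathrm{A}$, and in part (ii) the test function $\mathrm{A}v_n$ lies in $\mathrm{H}_n$ only if $\mathrm{H}_n$ is $\mathrm{A}$-invariant. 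For part (i) the repair is routine: take any countable set linearly dense in $\mathrm{V}$ (e.g. from $\mathcal{V}$), and replace the $\mathrm{V}'$-bound on $v_n'$ by equicontinuity of the scalar functions $t\mapsto (v_n(t),e_j)$ for each fixed $j$ (or a bound in a weaker dual space), which is all the local Aubin--Lions/diagonal argument needs. For part (ii) you need a genuinely different device, since ``multiply by $\mathrm{A}v_n$'' is not available at the Galerkin level without the spectral basis: one can, for instance, work with a basis contained in $D(\mathrm{A})$ chosen so that the finite-dimensional spaces behave well under $\mathrm{A}$, approximate $\mathcal{O}$ by bounded subdomains, or derive the $C([0,\infty);\mathrm{V})\cap L^2_{\mathrm{loc}}(D(\mathrm{A}))$ regularity a posteriori from the equation (mild formulation/maximal regularity for the Stokes semigroup), as is done in \cite{Brz_Li_2006b}. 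The remaining estimates you write (use of Lemma \ref{lem:form-b}, \eqref{eqn:4.0a}, Ladyzhenskaya, and the $L^1_{\mathrm{loc}}$ Gronwall coefficients) are fine, including the uniqueness argument.
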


It was proved in \cite[Proposition 6.16]{Brz_Li_2006b} that the map
$\varphi_\alpha$ does not depend on $\alpha$ and hence, from now on,  it will
be denoted  by $\varphi$. Furthermore, we have the
following result, see  \cite[Theorems 6.15 and 8.8]{Brz_Li_2006b}.

\begin{theorem}\label{thm_RDS}
Suppose that $\mathcal{O} \subset \mathbb{R}^2$ is a
Poincar{\'e} domain and  that
Assumption \ref{ass:A-01} is satisfied. Then the map $\varphi$   is an asymptotically compact  RDS
over   the metric DS
$\left(\hat{\Omega}(\xi,\mathrm{E}),\hat{\mathcal{F}},
\hat{\mathbb{P}},\hat{\vartheta}\right)$.
\end{theorem}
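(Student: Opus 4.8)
The plan is to verify separately the two assertions contained in the statement: that $\varphi$ is a continuous measurable RDS over the metric DS $(\hat\Omega(\xi,\mathrm E),\hat{\mathcal F},\hat{\mathbb P},\hat\vartheta)$, and that it is asymptotically compact. For the first part I would argue as follows. Measurability of $\varphi$ is obtained by composing the joint continuity of $\tilde\omega\mapsto(\hat z(\tilde\omega)(t),\hat z(\tilde\omega))$ from Corollary~\ref{cor:z-2} with the continuous dependence of the solution of \eqref{eqn:4.10}--\eqref{eqn:4.10-a} on the data $(v_0,z)$; the latter follows from the uniqueness part of Theorem~\ref{thm:4.2} via a Gronwall estimate for the difference of two solutions, in which the nonlinear contributions are absorbed using the bounds \eqref{eqn:4.0a} and \eqref{eqn:4.00} for $b$. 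Taking $z$ fixed gives continuity of $\varphi(t,\omega,\cdot)\colon\mathrm H\to\mathrm H$. For the cocycle identity I would observe that, with $u(\cdot):=\varphi(\cdot,\omega,x)$, the function $u-z_\alpha(\omega)$ is the unique solution of \eqref{eqn:4.10}--\eqref{eqn:4.10-a} with datum $x-z_\alpha(\omega)(0)$ and forcing $z_\alpha(\omega)$, and then combine the stationarity relation \eqref{eqn_z-stat} (which yields $z_\alpha(\vartheta_s\omega)(\cdot)=z_\alpha(\omega)(\cdot+s)$ and $z_\alpha(\vartheta_s\omega)(0)=z_\alpha(\omega)(s)$) with uniqueness in Theorem~\ref{thm:4.2}(i): running the $v$-equation from time $s$ with the shifted forcing reproduces $v(s+t,\cdot)$, which gives $\varphi(t+s,\omega,x)=\varphi(t,\vartheta_s\omega,\varphi(s,\omega,x))$. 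Here one uses that $\varphi_\alpha$ is independent of $\alpha$ (\cite[Proposition~6.16]{Brz_Li_2006b}), so that the construction is unambiguous.

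For asymptotic compactness I would fix $\omega\in\hat\Omega(\xi,\mathrm E)$, a sequence $t_n\to\infty$ and points $x_n$ in a bounded set $D(\vartheta_{-t_n}\omega)$, and, using \eqref{eqn:varphi} together with $z_\alpha(\vartheta_{-t_n}\omega)(t_n)=z_\alpha(\omega)(0)$ (from \eqref{eqn_z-stat}), reduce the claim to pre-compactness in $\mathrm H$ of $v_n:=v(t_n,z_\alpha(\vartheta_{-t_n}\omega))(x_n-z_\alpha(\vartheta_{-t_n}\omega)(0))$. The decisive structural point is that, for each fixed $T>0$, the translates $v^n(s):=v(t_n+s,z_\alpha(\vartheta_{-t_n}\omega))(x_n-z_\alpha(\vartheta_{-t_n}\omega)(0))$, $s\in[-T,0]$, all solve \eqref{eqn:4.10} on $[-T,0]$ driven by \emph{one and the same} path $z_\alpha(\omega)|_{[-T,0]}$ (again by \eqref{eqn_z-stat}), only the initial value $v^n(-T)$ depending on $n$. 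First I would establish uniform (in $n$) a priori bounds on each window $[-T,0]$: testing \eqref{eqn:4.14} with $v$, using the Poincar\'e inequality \eqref{2.0}, the estimates \eqref{eqn:4.0a}--\eqref{eqn:4.00}, Young's inequality and Gronwall's lemma, and invoking the ergodic smallness \eqref{universalC} and Corollary~\ref{cor-ergodicestimates} (which supply a strictly positive net exponential rate $\sigma$), one gets a random absorbing ball in $\mathrm H$ — so that $|v^n(-T)|$ is, for $n$ large, at most a quantity $R_T(\omega)$ of at most polynomial growth in $T$ (using also the sublinear growth \eqref{eqn:A-sublinear} of $z_\alpha(\omega)$) — together with uniform bounds on $v^n$ in $L^\infty(-T,0;\mathrm H)\cap L^2(-T,0;\mathrm V)\cap L^2(-T,0;\mathbb L^4(\mathcal O))$ and on $\frac{d}{dt}v^n$ in $L^{4/3}(-T,0;\mathrm V')$. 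Passing to a subsequence and diagonalizing over $T\to\infty$, one gets $v^n\rightharpoonup\bar v$ (weak-$\ast$ in $L^\infty_{\mathrm{loc}}$, weakly in $L^2_{\mathrm{loc}}(\mathrm V)$), with $v_n=v^n(0)\rightharpoonup w:=\bar v(0)$ weakly in $\mathrm H$, and $\bar v$ a bounded solution of \eqref{eqn:4.10} on $(-\infty,0]$ with forcing $z_\alpha(\omega)$; to identify the limit equation — the only delicate term being the quadratic one, since $\mathrm V$ is not compactly embedded in $\mathrm H$ — one tests against functions supported in a bounded subdomain $\mathcal O\cap B_\rho$, where the embedding \emph{is} compact, so that Aubin--Lions yields strong convergence in $L^2(-T,0;\mathbb L^4(\mathcal O\cap B_\rho))$, and then lets $\rho\to\infty$ by density.

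The core of the argument, and the step I expect to be the main obstacle, is upgrading $v^n(0)\rightharpoonup w$ to strong convergence $|v^n(0)|\to|w|$. Since $|w|\le\liminf_n|v^n(0)|$ is automatic, the point is to prove $\limsup_n|v^n(0)|\le|w|$, and for this I would use the energy method of Ball in the form adapted to unbounded Poincar\'e domains in \cite{Caraballo_L_R_2006} (see also \cite{lusa04} and Rosa~\cite{Rosa-98}). Two-dimensional solutions satisfy the energy equality ($v^n\in C([-T,0];\mathrm H)\cap L^2(-T,0;\mathrm V)$ with $\frac{d}{dt}v^n\in L^{4/3}(-T,0;\mathrm V')$), so I would write that identity for $v^n$ with the integrating factor $e^{\sigma s}$ over $[-T,0]$ and let first $n\to\infty$, then $T\to\infty$: the boundary term $e^{-\sigma T}|v^n(-T)|^2\le e^{-\sigma T}R_T(\omega)^2$ vanishes as $T\to\infty$; the viscous term is handled by weak lower semicontinuity; the contributions linear in $v^n$ (built from $z_\alpha(\omega)$ and $f$) pass to the limit by weak convergence in $L^2(-T,0;\mathrm V)$; and the genuinely nonlinear term $\int_{-T}^0 e^{\sigma s}b(v^n(s),v^n(s),z_\alpha(\omega)(s))\,ds$ is split into its part over $\mathcal O\cap B_\rho$ (convergent by the strong local convergence above) and a spatial tail bounded by $C(\omega,T)\sup_{s\in[-T,0]}|z_\alpha(\omega)(s)|_{\mathbb L^4(\mathcal O\cap B_\rho^c)}$, which tends to $0$ as $\rho\to\infty$ because the set $\{z_\alpha(\omega)(s):s\in[-T,0]\}$ is compact in $\mathbb L^4(\mathcal O)$ — as $z_\alpha(\omega)\in C_{1/2}(\mathbb R,\mathrm X)$ with $\mathrm X=\mathrm H\cap\mathbb L^4(\mathcal O)$ — and hence equi-integrable at spatial infinity. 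Comparing the resulting inequality with the energy identity for the limit $\bar v$ then gives $\limsup_n|v^n(0)|^2\le|\bar v(0)|^2=|w|^2$, hence $v^n(0)\to w$ in $\mathrm H$. The delicate bookkeeping throughout this last step is to keep the non-negative terms on the correct side of each inequality — which is precisely where the smallness \eqref{universalC} enters, making the relevant time-integrated quadratic forms coercive. Together with the first paragraph, this shows that $\varphi$ is an asymptotically compact RDS; the details are those of \cite[Sections~6 and~8]{Brz_Li_2006b}.
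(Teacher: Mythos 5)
Your proposal is essentially correct, but note first how the paper handles this statement: Theorem \ref{thm_RDS} is not proved in the paper at all, it is quoted from \cite{Brz_Li_2006b} (Theorems 6.15 and 8.8 there), so your sketch is in effect a reconstruction of the cited argument; what the paper proves afresh is the stronger Proposition \ref{prp:5.1}, namely $\mathfrak{DR}$-asymptotic compactness for the tempered-type class of Definition \ref{def-R+D}, assuming an absorbing set in $\mathfrak{DR}$ (supplied by Theorem \ref{thm:5.1}(i)).

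Comparing the two routes: both are the weighted-energy (Ball) method, but the bookkeeping differs. You diagonalize over windows $[-T,0]$ to produce a limiting bounded solution $\bar v$ on $(-\infty,0]$ and then compare weighted energy identities, which forces you to control the boundary term $e^{-\sigma T}\vert v^n(-T)\vert^2$; your assertion that the absorbing radius $R_T(\omega)$ grows at most polynomially in $T$ is stated rather than proved, and this is precisely the point where the paper's machinery does the real work: the class $\mathfrak{R}$, the estimate \eqref{eqn:5.10b}, and Lemmas \ref{lem_8.5} and \ref{lem_8.6} give an integrable majorant $h\in L^1(-\infty,0)$ so that the analogous term is $\le\int_{-\infty}^{-k}h(s)\,ds$. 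The paper (following \cite{Brz_Li_2006b}) instead builds, by successive weak limits and a diagonal subsequence, a complete negative trajectory $(y_{-k})$ inside the absorbing set and compares the energy equality \eqref{eqn:5.3a} for $v^{n^{(k)}}$ on $[-k,0]$ with the same identity along the trajectory through $y_{-k}$, getting $\limsup_n\vert\varphi(t_n,\vartheta_{-t_n}\omega,x_n)-z(0)\vert^2\le\int_{-\infty}^{-k}h(s)\,ds+\vert y_0-z(0)\vert^2$ and letting $k\to\infty$; this avoids constructing a solution on the whole half-line. Two smaller points: in 2D one has $dv/dt\in L^2_{\mathrm{loc}}(\mathrm{V}^\prime)$ (since $\Vert B(v)\Vert_{\mathrm{V}^\prime}\le C\vert v\vert\,\Vert v\Vert$ with $v\in L^\infty(\mathrm{H})\cap L^2(\mathrm{V})$), so the energy equality is justified by the standard Lions--Magenes argument rather than by your $L^{4/3}$ bound, which by itself would not suffice; and your hands-on treatment of the trilinear term (Aubin--Lions on $\mathcal{O}\cap B_\rho$ plus an $\mathbb{L}^4$-tail estimate using compactness of $\{z_\alpha(\omega)(s):s\in[-T,0]\}$ in $\mathbb{L}^4$) is correct but essentially reproves what the paper delegates to \cite[Corollary 5.3]{Brz_Li_2006b} together with the local strong convergence \eqref{eqn:5.9d}. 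With the boundary-term decay argued via Lemmas \ref{lem_8.5}--\ref{lem_8.6} instead of the unproved growth claim, your route closes and buys a self-contained limiting trajectory, at the cost of redoing estimates the paper imports.
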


Our previous results yield the existence and the uniqueness of
solutions to problem \eqref{eqn_SNSE_add_noise}   %{eqn:4.7}
as well as its continuous dependence on the data (in particular on
the initial value $u_0$ and the force $f$). Moreover, if we
define, for $ x\in \mathrm{H}$, $\omega \in \Omega$, and $t \geq
s$,
\begin{equation}\label{eqn:4.12}
u(t,s;\omega,u_0):=\varphi(t-s;\vartheta_{s}\omega)u_0=v\big(t,s;\omega,u_0-z(s)\big)+z(t),
\end{equation}
then for each $s\in \mathbb{R}$ and each $u_0\in \mathrm{H}$, the
process $u(t)$, $t \geq s$, is a solution to problem \eqref{eqn_SNSE_add_noise}%{eqn:4.7}
.

Let us now recall Lemma 8.3 and Lemma 8.5 from \cite{Brz_Li_2006b} in which   $\lambda_1$ is the constant appearing in the Poincar{\'e} inequality \eqref{2.0} and
\begin{equation}\label{def-newnorm}
[v]^2:=\nu\|v\|^2-\displaystyle\frac{\lambda_1}{2}|v|^2,\; v\in V.
\end{equation}

\begin{lemma}\label{lem_8.3}
Suppose that $v$ is a solution to problem \eqref{eqn:4.10} on the
time interval $[a, \infty)$ with $z\in L^4_{\mathrm{loc}}\big([a,
\infty),\mathbb{L}^4(\mathcal{O})\big)\cap L^2_{\mathrm{loc}}([a,
\infty),\mathrm{V}^\prime)$ and   $\alpha \geq 0$.  Denote $\beta =\frac{\nu\lambda_1}2$ and
$g(t)=\alpha z(t) -B\big(z(t),z(t)\big)$, $t\in [a, \infty)$. Then,
 for any $t \ge\tau \geq a$
\begin{equation}\label{eqn:5.10b}\begin{aligned} \vert v(t)
\vert ^2 \leq &\vert v(\tau)\vert^2
e^{-\nu\lambda_1(t-\tau)+\frac{3C^2}{\nu}\int^t_{\tau}\vert
z(s)\vert^2_{\mathbb{L}^4}ds}\cr &+\frac{3}{\nu
}\int^t_{\tau}\{\vert g(s)\vert_{\mathrm{V}^\prime}^2+\vert f
\vert^2\}e^{-\nu\lambda_1(t-s)+\frac{3C^2}{\nu}\int_s^t(\vert
z(\zeta)\vert^2_{\mathbb{L}^4})d\zeta}ds.\end{aligned}
\end{equation}
\begin{equation}\label{eqn:5.3a}\begin{aligned}
\vert v(t)\vert ^2=&\vert v(\tau)\vert ^2e^{-\nu \lambda_1
(t-\tau)}+2\int_{\tau}^t e^{-\nu \lambda_1 (t-s)}(\langle
B(v(s),z(s)),v(t)\rangle\cr&+\langle g(s),v(s)\rangle+\langle f
,v(s)\rangle -[v(s)]^2)\,ds.
\end{aligned}
\end{equation}
\end{lemma}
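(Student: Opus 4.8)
The plan is to obtain both estimates from a single energy identity for $t\mapsto|v(t)|^2$, and then to read \eqref{eqn:5.3a} off it by Duhamel's formula and \eqref{eqn:5.10b} off it by estimating the right-hand side and invoking a scalar Gronwall inequality; both statements are \cite[Lemmas 8.3 and 8.5]{Brz_Li_2006b}, and I would only indicate the argument.

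First I would record the regularity of $v$. Beyond what is stated in the definition of solution, the Galerkin construction underlying Theorem \ref{thm:4.2}(i) also yields $v\in L^2_{\mathrm{loc}}([a,\infty);\mathrm V)$ with $\dot v\in L^2_{\mathrm{loc}}([a,\infty);\mathrm V')$, so that $t\mapsto|v(t)|^2$ is locally absolutely continuous and $\frac12\frac{d}{dt}|v(t)|^2=\langle\dot v(t),v(t)\rangle$ for a.e.\ $t$. Pairing \eqref{eqn:4.10} with $v(t)$ and using the cancellation relations \eqref{eqn:b01} — which kill $\langle B(v),v\rangle=b(v,v,v)=0$ and $\langle B(z,v),v\rangle=b(z,v,v)=0$, the latter being legitimate for $z(t)\in\mathbb{L}^4(\mathcal O)\cap\mathrm H$ by approximating the divergence-free field $z(t)$ by elements of $\mathcal V$ and using the continuity of $b$ from Lemma \ref{lem:form-b} — one is left with the single surviving trilinear term and obtains, for a.e.\ $t\ge a$,
\[
\frac12\frac{d}{dt}|v(t)|^2+\nu\|v(t)\|^2=\langle B(v(t),z(t)),v(t)\rangle+\langle g(t),v(t)\rangle+\langle f,v(t)\rangle,
\]
where $g(t)=\alpha z(t)-B(z(t),z(t))\in\mathrm V'$ by \eqref{eqn:4.0}. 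Rewriting $\nu\|v(t)\|^2=[v(t)]^2+\frac{\lambda_1}{2}|v(t)|^2$ via \eqref{def-newnorm} puts this in the equivalent form $\frac{d}{dt}|v(t)|^2+\nu\lambda_1|v(t)|^2=2\big(\langle B(v(t),z(t)),v(t)\rangle+\langle g(t),v(t)\rangle+\langle f,v(t)\rangle-[v(t)]^2\big)$.

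For \eqref{eqn:5.3a} I would simply integrate this linear ODE: its right-hand side lies in $L^1_{\mathrm{loc}}$, since $|\langle B(v,z),v\rangle|=|b(v,v,z)|\le C|v|_{\mathbb{L}^4}|\nabla v|\,|z|_{\mathbb{L}^4}$ by \eqref{eqn:4.00} is integrable because $v\in L^4_{\mathrm{loc}}(\mathbb{L}^4)\cap L^2_{\mathrm{loc}}(\mathrm V)$ and $z\in L^4_{\mathrm{loc}}(\mathbb{L}^4)$, while the remaining terms are handled by Cauchy--Schwarz; Duhamel's formula on $[\tau,t]$ then gives \eqref{eqn:5.3a}. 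For \eqref{eqn:5.10b} I would instead estimate the right-hand side of the energy identity: bound the trilinear term by $|b(v,v,z)|\le C|v|_{\mathbb{L}^4}|\nabla v|\,|z|_{\mathbb{L}^4}$, then use the $2$-dimensional Ladyzhenskaya inequality $|v|_{\mathbb{L}^4}^2\le\sqrt2\,|v|\,\|v\|$ and Young's inequality, and bound $\langle g(t),v(t)\rangle$ and $\langle f,v(t)\rangle$ by Cauchy--Schwarz and Young; absorbing a total of $\frac{\nu}{2}\|v(t)\|^2$ into the left-hand side and using the Poincar\'e inequality \eqref{2.0} on the remaining $\frac{\nu}{2}\|v(t)\|^2$ leads to $\frac{d}{dt}|v(t)|^2+\nu\lambda_1|v(t)|^2\le\frac{3C^2}{\nu}|z(t)|^2_{\mathbb{L}^4}|v(t)|^2+\frac{3}{\nu}\big(|g(t)|^2_{\mathrm V'}+|f|^2\big)$ for a.e.\ $t\ge a$. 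Multiplying by the integrating factor $\exp\!\big(\int_\tau^t(\nu\lambda_1-\frac{3C^2}{\nu}|z(\sigma)|^2_{\mathbb{L}^4})\,d\sigma\big)$ and integrating over $[\tau,t]$ yields \eqref{eqn:5.10b}.

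The Young-inequality and Gronwall bookkeeping (including pinning down the constant $C$) is routine; the two delicate points are the following. First, the chain rule $\frac12\frac{d}{dt}|v|^2=\langle\dot v,v\rangle$ must be justified — one needs $v\in L^2_{\mathrm{loc}}(\mathrm V)$ and $\dot v\in L^2_{\mathrm{loc}}(\mathrm V')$, which are not part of the definition of solution and must be extracted from the a priori energy bounds in the Galerkin scheme; alternatively one proves \eqref{eqn:5.10b} and \eqref{eqn:5.3a} first for the Galerkin approximations and passes to the limit, the equality \eqref{eqn:5.3a} then requiring strong convergence of the nonlinear term, which the $2$-dimensional Aubin--Lions compactness provides. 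Second, since the Ornstein--Uhlenbeck term $z=z_\alpha$ takes values only in $\mathbb{L}^4(\mathcal O)\cap\mathrm H$ and not in $\mathrm V$, the cancellation $b(z,v,v)=0$ and the bound on $b(v,z,v)=-b(v,v,z)$ rest on the extension of $b$ and the inequality \eqref{eqn:4.00} from Lemma \ref{lem:form-b} and on $z$ being divergence free; this is the price of allowing rough noise and is essentially the only place where the argument departs from the classical deterministic computation.
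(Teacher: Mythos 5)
You should first be aware that the paper itself offers no proof of this statement: it is quoted verbatim (as ``Lemma 8.3 and Lemma 8.5'') from \cite{Brz_Li_2006b}, so what you were really asked to reconstruct is the argument of that reference. Your architecture is the correct one and coincides with it: justify the chain rule for $t\mapsto|v(t)|^2$ via $v\in L^2_{\mathrm{loc}}(\mathrm{V})$, $\dot v\in L^2_{\mathrm{loc}}(\mathrm{V}')$ (or prove the estimates for Galerkin approximations and pass to the limit), use the cancellations \eqref{eqn:b01} together with the extension of $b$ from Lemma \ref{lem:form-b} to make sense of the terms with $z(t)\in\mathbb{L}^4(\mathcal{O})\cap\mathrm{H}$ only, then variation of constants for \eqref{eqn:5.3a} and Young plus Gronwall for \eqref{eqn:5.10b}. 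One small remark: pairing \eqref{eqn:4.10} with $v$ leaves the surviving trilinear term as $-\langle B(v,z),v\rangle=b(v,v,z)$, not $+\langle B(v,z),v\rangle$; your displayed identity reproduces the sign (and the $v(t)$-for-$v(s)$ slip) of the transcription \eqref{eqn:5.3a}, which is immaterial for \eqref{eqn:5.10b} and for the later asymptotic-compactness argument, but a derivation claimed to come from \eqref{eqn:4.10} should carry the minus sign (equivalently, write the term as $b(v(s),v(s),z(s))$, which is how it is handled in the source).

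The genuine gap is precisely in the step you dismiss as routine bookkeeping. For $z$ merely in $\mathbb{L}^4(\mathcal{O})\cap\mathrm{H}$ the only available bound is $|b(v,v,z)|\le C|v|_{\mathbb{L}^4}\|v\|\,|z|_{\mathbb{L}^4}$ from \eqref{eqn:4.00}, and with Ladyzhenskaya this gives $|b(v,v,z)|\le 2^{1/4}C|v|^{1/2}\|v\|^{3/2}|z|_{\mathbb{L}^4}$; any application of Young's inequality that absorbs a multiple of $\|v\|^2$ then necessarily produces a complementary term proportional to $|v|^2|z|^4_{\mathbb{L}^4}$ (matching exponents: splitting into $\|v\|^2$ and $|v|^2|z|^k_{\mathbb{L}^4}$ forces the weights $3/4$ and $1/4$, hence $k=4$). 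The differential inequality you claim, with Gronwall coefficient $\frac{3C^2}{\nu}|z|^2_{\mathbb{L}^4}|v|^2$, would instead require an estimate of the form $|b(v,v,z)|\le C\,|v|\,\|v\|\,|z|_{\mathbb{L}^4}$, which is not obtainable by H\"older here (the exponents $(2,4,4)$ would need $\nabla v\in\mathbb{L}^4$, while $(4,2,4)$ produces $|v|_{\mathbb{L}^4}$, not $|v|$). So the chain of estimates you describe yields an exponential factor of the type $\exp\bigl(c\,\nu^{-3}\int|z|^4_{\mathbb{L}^4}\bigr)$ rather than \eqref{eqn:5.10b} as stated, and since the exact form of this exponent is what \eqref{universalC}, Corollary \ref{cor-ergodicestimates} and the class $\mathfrak{R}$ are calibrated to, this step cannot be left implicit: you should either exhibit the estimate that genuinely produces the $|z|^2_{\mathbb{L}^4}$ coefficient, or follow the proof of Lemma 8.3 in \cite{Brz_Li_2006b} and state the inequality in the form that that proof actually delivers.
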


%{lem:e2}
\begin{lemma}\label{lem_8.5} Under the above assumptions, for each $\omega\in\Oe$,
\begin{equation*}
\lim_{t \to -\infty}\vert z(\omega)(t)\vert^2 e^{\nu \lambda_1t+
\int^{0}_{t}\frac{3C^2}{\nu} \vert
z(\omega)(s)\vert_{\mathbb{L}^4}^2 \,ds } =0.
\end{equation*}
\end{lemma}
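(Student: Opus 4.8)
The plan is to combine the sublinear growth estimate for the Ornstein--Uhlenbeck process with the strong-law-of-large-numbers behaviour encoded in the definition of $\hat\Omega(\xi,\mathrm{E})$. First I would observe that since $\omega \in \Oe$ (and in fact we only need $\omega$ in the set where the ergodic limit \eqref{eqn:slln} holds, which is exactly $\hat\Omega(\xi,\mathrm{E})$), Proposition \ref{prp:z-1} -- more precisely the sublinear bound \eqref{eqn:A-sublinear} applied to $z_\alpha(\omega)={\hat z}\big((\nu A+\alpha I)^{-\delta}\omega\big)$ -- gives a constant $C_2$ with $\vert z(\omega)(t)\vert_{\mathrm{X}} \le C_2(1+\vert t\vert^{1/2})\Vert(\nu A+\alpha I)^{-\delta}\omega\Vert$ for all $t$, and since $\mathbb{L}^4(\mathcal{O})$-norm is dominated by the $\mathrm{X}$-norm (as $\mathrm{X}=\mathrm{H}\cap\mathbb{L}^4(\mathcal{O})$), we get $\vert z(\omega)(t)\vert^2_{\mathbb{L}^4} = O(\vert t\vert)$ as $t\to-\infty$. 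Hence the prefactor $\vert z(\omega)(t)\vert^2$ grows at most linearly.

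The key point is then to show the exponent tends to $-\infty$. Write the exponent as $\nu\lambda_1 t + \frac{3C^2}{\nu}\int_t^0 \vert z(\omega)(s)\vert^2_{\mathbb{L}^4}\,ds$. Substituting $t=-r$ with $r\to+\infty$, this is $-r\big(\nu\lambda_1 - \frac{1}{r}\cdot\frac{3C^2}{\nu}\int_{-r}^0 \vert z(\omega)(s)\vert^2_{\mathbb{L}^4}\,ds\big)$. By Corollary \ref{cor-ergodicestimates} (equivalently, by \eqref{eqn:slln} together with the choice of $\alpha\ge\alpha_0$ in \eqref{universalC}), for $r$ large enough we have $\frac{3C^2}{\nu}\int_{-r}^0 \vert z(\omega)(s)\vert^2_{\mathbb{L}^4}\,ds < \frac{\nu\lambda_1 r}{2}$, so the quantity in parentheses exceeds $\nu\lambda_1/2 > 0$. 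Therefore the exponent is bounded above by $-\frac{\nu\lambda_1}{2}r \to -\infty$.

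Finally, I would combine the two estimates: for $r=-t$ sufficiently large,
$$
\vert z(\omega)(t)\vert^2 e^{\nu\lambda_1 t + \frac{3C^2}{\nu}\int_t^0 \vert z(\omega)(s)\vert^2_{\mathbb{L}^4}\,ds} \le C_2^2 \Vert(\nu A+\alpha I)^{-\delta}\omega\Vert^2 (1+r^{1/2})^2 \, e^{-\frac{\nu\lambda_1}{2}r},
$$
and the right-hand side tends to $0$ as $r\to\infty$ since the exponential decay beats the polynomial growth. This gives the claim.

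The main obstacle -- really the only subtlety -- is making sure the ergodic estimate \eqref{eqn:slln} is applied on the right set of $\omega$'s and with the right norm: the limit in \eqref{eqn:slln} is stated with the $\mathrm{X}$-norm, while the lemma involves the $\mathbb{L}^4$-norm, so one should note $\vert\cdot\vert_{\mathbb{L}^4}\le\vert\cdot\vert_{\mathrm{X}}$ and use $\mathbb{E}\vert z_\alpha(0)\vert^2_{\mathrm{X}} < \nu^2\lambda_1/(6C^2)$ from \eqref{universalC} to get $\frac{3C^2}{\nu}\mathbb{E}\vert z_\alpha(0)\vert^2_{\mathrm{X}} < \frac{\nu\lambda_1}{2}$, which is exactly the strict inequality needed for the Cesàro average to eventually fall below $\frac{\nu\lambda_1}{2}$; this is precisely the content of Corollary \ref{cor-ergodicestimates}. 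Everything else is a routine "polynomial times decaying exponential'' argument.
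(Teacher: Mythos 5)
Your argument is correct and uses exactly the mechanism the paper relies on: the paper does not reprove Lemma \ref{lem_8.5} (it is quoted from \cite{Brz_Li_2006b}), but your combination of the sublinear bound \eqref{eqn:A-sublinear} with Corollary \ref{cor-ergodicestimates} (via $\vert\cdot\vert_{\mathbb{L}^4}\le\vert\cdot\vert_{\mathrm{X}}$), followed by the observation that $e^{-\nu\lambda_1 r/2}$ dominates polynomial growth, is precisely how the paper proves the companion Lemma \ref{lem_8.6}. Your caveat that the ergodic input genuinely requires $\omega\in\hat{\Omega}(\xi,\mathrm{E})$ rather than an arbitrary $\omega\in\Oe$ is also the right reading of the statement, since that conegligible, $\vartheta$-invariant set is where the lemma is actually applied (e.g.\ in Proposition \ref{prop_class_R}).
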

Finally, let us  recall  a result containing in itself
\cite[Lemmas  8.6 and 8.7]{Brz_Li_2006b}.

%{lem:e3}
\begin{lemma}\label{lem_8.6} Under the above assumptions, for each $\omega\in\Oe$,
\begin{equation*}
\int_{-\infty}^0\big[1+ \vert z(\omega)(t)\vert^2_{\mathbb{L}^4}
+\vert z(\omega)(t)\vert^4_{\mathbb{L}^4}\big] e^{\nu \lambda_1t+
\int^{0}_{t}\frac{3C^2}{\nu} \vert
z(\omega)(s)\vert_{\mathbb{L}^4}^2 \,ds }\,dt<\infty.
\end{equation*}
\end{lemma}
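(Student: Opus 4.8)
The plan is to split $\int_{-\infty}^0$ at a random time $-t_0(\omega)$, to dominate the integrand on the infinite half-line $(-\infty,-t_0(\omega)]$ by a finite ($\omega$-dependent) constant times $(1+t^2)\,e^{\nu\lambda_1 t/2}$ — which is integrable — and to handle the compact remainder $[-t_0(\omega),0]$ by continuity alone. To fix notation, write $z=z_\alpha$ for the fixed $\alpha\ge\alpha_0$ of \eqref{universalC} and set
$$
\Phi(t):=\nu\lambda_1 t+\int_t^0\frac{3C^2}{\nu}\vert z(\omega)(s)\vert_{\mathbb{L}^4}^2\,ds,\qquad t\le 0,
$$
for the exponent, so that the integrand in the Lemma equals $\big(1+\vert z(\omega)(t)\vert^2_{\mathbb{L}^4}+\vert z(\omega)(t)\vert^4_{\mathbb{L}^4}\big)\,e^{\Phi(t)}$.

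First I would control the polynomial prefactor. Since $\mathrm{X}=\mathrm{H}\cap\mathbb{L}^4(\mathcal{O})$ embeds continuously in $\mathbb{L}^4(\mathcal{O})$ and, by the sublinear bound \eqref{eqn:A-sublinear} of Proposition \ref{prp:z-1} applied to $(\nu A+\alpha I)^{-\delta}\omega\in\Cx$, one has $\vert z(\omega)(t)\vert_{\mathbb{L}^4}\le C(\omega)\,(1+\vert t\vert^{1/2})$ with $C(\omega)<\infty$; expanding the fourth power then yields a finite $P(\omega)$ such that $1+\vert z(\omega)(t)\vert^2_{\mathbb{L}^4}+\vert z(\omega)(t)\vert^4_{\mathbb{L}^4}\le P(\omega)\,(1+t^2)$ for all $t\le 0$.

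The one substantive step is to extract exponential decay from $e^{\Phi(t)}$, and here I would use Corollary \ref{cor-ergodicestimates}: there is $t_0=t_0(\omega)\ge 0$ such that $\frac{3C^2}{\nu}\int_{-\tau}^0\vert z_\alpha(\omega)(s)\vert_{\mathrm{X}}^2\,ds<\frac{\nu\lambda_1\tau}{2}$ for every $\tau\ge t_0$. Since $\vert z(\omega)(s)\vert_{\mathbb{L}^4}\le\vert z_\alpha(\omega)(s)\vert_{\mathrm{X}}$, taking $\tau=-t$ gives, for every $t\le-t_0$,
$$
\Phi(t)<\nu\lambda_1 t+\frac{\nu\lambda_1(-t)}{2}=\frac{\nu\lambda_1 t}{2}<0.
$$
Hence on $(-\infty,-t_0]$ the integrand is $\le P(\omega)\,(1+t^2)\,e^{\nu\lambda_1 t/2}$, whose integral over that half-line is finite because $\nu\lambda_1>0$. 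On $[-t_0,0]$ the map $s\mapsto z(\omega)(s)$ is continuous into $\mathbb{L}^4(\mathcal{O})$ (by Proposition \ref{prp:z-1}, $z_\alpha(\omega)\in C_{1/2}(\mathbb{R},\mathrm{X})\subset C(\mathbb{R},\mathbb{L}^4(\mathcal{O}))$), so $\Phi$ is continuous there and the integrand is bounded; that contribution is finite too. Summing the two pieces proves the Lemma.

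The main obstacle — modest as it is — is exactly this decay estimate: by itself the sublinear bound only says $\vert z(\omega)(s)\vert_{\mathbb{L}^4}$ grows at most like $\vert s\vert^{1/2}$, so a priori $\int_t^0\vert z(\omega)(s)\vert_{\mathbb{L}^4}^2\,ds$ could grow like $t^2$ and force $\Phi(t)\to+\infty$ as $t\to-\infty$. What rescues the argument is the choice of $\alpha\ge\alpha_0$ in \eqref{universalC}, which through the strong law of large numbers keeps the time average $\frac1\tau\int_{-\tau}^0\vert z_\alpha(\omega)(s)\vert_{\mathrm{X}}^2\,ds$ eventually below $\frac{\nu^2\lambda_1}{6C^2}$, i.e.\ exactly Corollary \ref{cor-ergodicestimates}. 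With that in hand the rest is the routine domination of a polynomial by a decaying exponential, and the same computation incidentally reproves Lemma \ref{lem_8.5}.
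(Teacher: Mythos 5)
Your proof is correct and follows essentially the same route as the paper's: split the integral at the random time $-t_0(\omega)$ furnished by Corollary \ref{cor-ergodicestimates}, use the sublinear growth estimate \eqref{eqn:A-sublinear} to dominate the prefactor polynomially so that the exponent's decay $e^{\nu\lambda_1 t/2}$ wins on $(-\infty,-t_0]$, and dispose of the compact piece $[-t_0,0]$ by continuity. The only differences are cosmetic: the paper treats just the $\vert z\vert^4_{\mathbb{L}^4}$ term (declaring the others analogous) and factors out the constant $\rho_3(\omega)=e^{\int_{-t_0}^0(-\nu\lambda_1+\frac{3C^2}{\nu}\vert z\vert^2_{\mathbb{L}^4})\,dr}$ instead of bounding the exponent $\Phi(t)$ directly as you do.
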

Since the proof of Lemma 8.6 from  \cite{Brz_Li_2006b} is miraculously
missing from the final version of that paper,  below we will present a detailed proof of Lemma \ref{lem_8.6}.
In fact, it is enough to consider the integral with the $4$th
moment of $z$ as the cases of $1$ and  of the $2$nd moment follow analogously.
\begin{proof} It is enough to consider the case of $\vert z(\omega)(t)\vert^4_{\mathbb{L}^4}$.
Let us fix $\omega \in \Omega$. By Corollary \ref{cor-ergodicestimates}  we
can find $t_0\geq 0$ such that for $t\geq t_0$,
$$
\int^{-t_0}_{-t}(-\nu \lambda_1+\frac{3C^2}{\nu}\vert
z(s)\vert_{\mathbb{L}^4}^2)\, ds\leq -\frac{\nu
\lambda_1(t-t_0)}{2}.
$$
By the continuity of all relevant functions, it is sufficient to prove
that the integral $\int_{-\infty}^{-t_0} \vert
z(\omega)(t)\vert^4_{\mathbb{L}^4} e^{\nu \lambda_1t+
\int^{0}_{t}\frac{3C^2}{\nu} \vert
z(\omega)(s)\vert_{\mathbb{L}^4}^2 \,ds }\,dt$ is finite.

Because of inequality  \eqref{eqn:A-sublinear}, we can find  a constant
$\rho_2=\rho_2(\omega)$, such that
\begin{equation*}\label{eqn:001}
\frac{\vert z(t) \vert_{\mathbb{L}^4}}{1+\vert t \vert} \leq \rho_2,
\; t\in\mathbb{R} .
\end{equation*}
Therefore, with $\rho_3(\omega):=
e^{\int^0_{-t_0}(-\nu\lambda_1+\frac{3C^2}{\nu}\vert
z(r)\vert_{\mathbb{L}^4}^2 )\,dr}<\infty$, we have, for every $\omega\in\Omega$,

\begin{eqnarray*}\label{eqn:c4}
&&\hspace{-2truecm}\lefteqn{\int_{-\infty}^{-t_0}\vert z(s)\vert_{\mathbb{L}^4} ^4
e^{\int^0_{s}(-\nu\lambda_1+\frac{3C^2}{\nu}\vert
z(r)\vert_{\mathbb{L}^4}^2 )\,dr}\,ds}   \\
&=& \rho_3
 \int_{-\infty}^{-t_0}\vert
z(s)\vert_{\mathbb{L}^4} ^4 e^{\int^{-t_0}_{s}(-\nu\lambda_1+\frac{3C^2}{\nu}\vert z(r)\vert_{\mathbb{L}^4}^2 )\,dr} \,ds \\
 &&\hspace{1truecm}\lefteqn{\leq   \rho_2^4 \rho_3 e^{\frac{\nu\lambda_1}{2}t_0} \int_{-\infty}^{-t_0} \vert s\vert^4 e^{\frac{\nu\lambda_1}{2}s}\, ds<\infty.}
\end{eqnarray*}
\end{proof}

\begin{definition}\label{def-R+D}
A function $r:\Omega\to
(0,\infty)$ belongs to the class $\mathfrak{R}$  if and only if
\begin{equation}
\label{eqn_R} \limsup_{t\to\infty}r(\vartheta_{-t}\omega)^2
e^{-\nu\lambda_1t
+\frac{3C^2}{\nu}\int_{-t}^0|z(\omega)(s)|^2_{\mathbb{L}^4}\, ds }=0,
\end{equation}where $C>0$ is the constant appearing in \eqref{universalC}.\\
We denote by $\mathfrak{DR}$ the class of all closed and bounded  random  sets
$D$ on $H$ such that
the function radious $\Omega\ni \omega \mapsto r(D(\omega)):=\sup\{|x|_H:x\in B\}$ belongs to the
 class $\mathfrak{R}$.\end{definition}

Observe that by Corollary \ref{cor-ergodicestimates}, the constant functions belong to
$\mathfrak{R}$. The following result lists a couple of other
important examples of functions belonging to class $\mathfrak{R}$.
\begin{proposition}
\label{prop_class_R} Define functions $r_i:\Omega \to (0,\infty)$,
$i=1,2,3$ by the following formulae, for $\omega\in \Omega$,
\begin{eqnarray*}
r_1^2(\omega)&:=& |z(\omega)(0)|_H^2, \\
r_2^2(\omega)&:=& \sup_{s\leq 0}\vert z (\omega)(s)\vert_H^2
e^{\nu\lambda_1s+\frac{3C^2}{\nu}\int^0_{s}\vert
z(\omega)(r)\vert^2_{\mathbb{L}^4}\,dr} \\
r_3^2(\omega)&:=& \int_{-\infty}^0 \vert z (\omega)(s)\vert^2_H
e^{\nu\lambda_1s+\frac{3C^2}{\nu}\int^0_{s}\vert
z(\omega)(r)\vert^2_{\mathbb{L}^4}\,dr}\,ds \\
r_4^2(\omega)&:=& \int_{-\infty}^0 \vert z
(\omega)(s)\vert^4_{\mathbb{L}^4}
e^{\nu\lambda_1s+\frac{3C^2}{\nu}\int^0_{s}\vert
z(\omega)(r)\vert^2_{\mathbb{L}^4}\,dr}\,ds\\
r_5^2(\omega)&:=& \int_{-\infty}^0
e^{\nu\lambda_1s+\frac{3C^2}{\nu}\int^0_{s}\vert
z(\omega)(r)\vert^2_{\mathbb{L}^4}\,dr}\,ds.
\end{eqnarray*}
Then all these functions belong to class $\mathfrak{R}$.\\
The class $\mathfrak{R}$ is closed  with respect to sum,
multiplication by a constant and if $r\in\mathfrak{R}$, $0\leq
\bar{r}\leq r$, then $\bar{r}\in\mathfrak{R}$.
\end{proposition}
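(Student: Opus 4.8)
The plan is to reduce every assertion to the stationarity identity $z(\vartheta_{-t}\omega)(\sigma)=z(\omega)(\sigma-t)$, which is \eqref{eqn_z-stat} read with outer time $-t$ and inner time $\sigma$, together with Lemmas \ref{lem_8.5} and \ref{lem_8.6} and Corollary \ref{cor-ergodicestimates}. Write $E_t(\omega):=\exp\{-\nu\lambda_1 t+\tfrac{3C^2}{\nu}\int_{-t}^0|z(\omega)(s)|_{\mathbb{L}^4}^2\,ds\}$ for the weight occurring in \eqref{eqn_R}; under the substitution $\tau=-t\le 0$ it becomes $\exp\{\nu\lambda_1\tau+\tfrac{3C^2}{\nu}\int_{\tau}^0|z(\omega)(s)|_{\mathbb{L}^4}^2\,ds\}$, i.e. exactly the weight appearing in Lemmas \ref{lem_8.5}, \ref{lem_8.6} and in the formulae for $r_2,\dots,r_5$. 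For each of the five functions I would substitute the stationarity identity, change variables in the remaining time integrals (and, for $r_2$, also in the supremum), and then observe that the factor $e^{+\nu\lambda_1 t}$ produced by $r_i^2(\vartheta_{-t}\omega)$ cancels the $e^{-\nu\lambda_1 t}$ in $E_t(\omega)$, while the two $\mathbb{L}^4$-integrals merge into a single one over $[\sigma,0]$.

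For $r_1$ this yields $r_1^2(\vartheta_{-t}\omega)E_t(\omega)=|z(\omega)(-t)|_H^2\exp\{-\nu\lambda_1 t+\tfrac{3C^2}{\nu}\int_{-t}^0|z(\omega)(s)|_{\mathbb{L}^4}^2\,ds\}$, which tends to $0$ as $t\to\infty$ by Lemma \ref{lem_8.5} (applied with $-t$ in place of the limit variable). For $r_2$, inserting $z(\vartheta_{-t}\omega)(\cdot)=z(\omega)(\cdot-t)$ into the defining supremum (including the inner integral) and substituting $\sigma=s-t$, I expect to reach
\[
r_2^2(\vartheta_{-t}\omega)E_t(\omega)=\sup_{\sigma\le -t}|z(\omega)(\sigma)|_H^2\exp\Bigl\{\nu\lambda_1\sigma+\tfrac{3C^2}{\nu}\int_{\sigma}^0|z(\omega)(s)|_{\mathbb{L}^4}^2\,ds\Bigr\};
\]
since the quantity under the supremum tends to $0$ as $\sigma\to-\infty$ by Lemma \ref{lem_8.5} and the supremum over $\{\sigma\le -t\}$ is nonincreasing in $t$, the right-hand side tends to $0$ as $t\to\infty$, so $r_2\in\mathfrak{R}$. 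The same change of variables also gives $r_2^2(\omega)<\infty$: split the supremum at $-t_0(\omega)$, use continuity on $[-t_0,0]$, and on $(-\infty,-t_0]$ bound the exponent by $\tfrac{\nu\lambda_1\sigma}{2}$ via Corollary \ref{cor-ergodicestimates} together with the at-most-$|\sigma|^{1/2}$ growth of $|z(\omega)(\sigma)|_H\le|z(\omega)(\sigma)|_{\mathrm{X}}$ from \eqref{eqn:A-sublinear}.

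For $r_3,r_4,r_5$ the same manoeuvre should produce, with $\Phi_i(\omega)(\cdot)$ the nonnegative integrand defining $r_i^2(\omega)$,
\[
r_i^2(\vartheta_{-t}\omega)E_t(\omega)=\int_{-\infty}^{-t}\Phi_i(\omega)(\sigma)\,d\sigma ,
\]
so it suffices to show $r_i^2(\omega)=\int_{-\infty}^0\Phi_i(\omega)<\infty$ for each $\omega$, since then the tail tends to $0$ as $t\to\infty$. For $i=4,5$ this is immediate from Lemma \ref{lem_8.6}, these being precisely the $|z|_{\mathbb{L}^4}^4$- and the $1$-term there. For $i=3$, where Lemma \ref{lem_8.6} gives nothing since the integrand carries $|z|_H^2$, I would repeat the argument in the proof of Lemma \ref{lem_8.6}: bound $|z(\omega)(\sigma)|_H^2\le|z(\omega)(\sigma)|_{\mathrm{X}}^2$ by a quantity of order $1+|\sigma|$ via \eqref{eqn:A-sublinear}, make the exponent in $\Phi_3(\omega)$ strictly less than $\tfrac{\nu\lambda_1\sigma}{2}$ for $\sigma\le -t_0(\omega)$ by Corollary \ref{cor-ergodicestimates}, and conclude from $\int_{-\infty}^{-t_0}(1+|\sigma|)e^{\nu\lambda_1\sigma/2}\,d\sigma<\infty$ and continuity of $\Phi_3(\omega)$ on $[-t_0,0]$. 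Hence $r_3,r_4,r_5\in\mathfrak{R}$.

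Finally, the stability properties of $\mathfrak{R}$ are routine consequences of the behaviour of $\limsup$ on nonnegative quantities: from $(r+\bar r)^2(\vartheta_{-t}\omega)E_t(\omega)\le 2r^2(\vartheta_{-t}\omega)E_t(\omega)+2\bar r^2(\vartheta_{-t}\omega)E_t(\omega)$ and subadditivity of $\limsup$ one gets $r+\bar r\in\mathfrak{R}$; from $(cr)^2=c^2 r^2$ one gets $cr\in\mathfrak{R}$ for $c>0$; and from $0\le\bar r^2(\vartheta_{-t}\omega)E_t(\omega)\le r^2(\vartheta_{-t}\omega)E_t(\omega)$ one gets $\bar r\in\mathfrak{R}$ whenever $0\le\bar r\le r\in\mathfrak{R}$. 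I do not expect a serious obstacle. The only mildly delicate points are that Lemma \ref{lem_8.6} is stated solely in the $\mathbb{L}^4$-norm, so the finiteness of $r_2^2(\omega)$ and $r_3^2(\omega)$ must be extracted from \eqref{eqn:A-sublinear} (in the stronger $\mathrm{X}$-norm) and Corollary \ref{cor-ergodicestimates} rather than quoted directly, and that the bookkeeping of the change of variables must be done carefully so that the $e^{\pm\nu\lambda_1 t}$ factors cancel exactly.
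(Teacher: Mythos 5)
Your proposal is correct and follows essentially the same route as the paper: the stationarity identity $z(\vartheta_{-t}\omega)(\cdot)=z(\omega)(\cdot-t)$, a change of variables that cancels the $e^{\pm\nu\lambda_1 t}$ factors and merges the two exponential integrals, Lemma \ref{lem_8.5} for $r_1,r_2$, tail integrals of the finite integrals of Lemma \ref{lem_8.6} for $r_3,r_4,r_5$, and the trivial closure properties. Your extra care for $r_3$ (re-running the proof of Lemma \ref{lem_8.6} via \eqref{eqn:A-sublinear} and Corollary \ref{cor-ergodicestimates}, since that lemma is stated only for $\mathbb{L}^4$-norms while $r_3$ carries the $H$-norm) is a welcome refinement of a point the paper passes over by citing Lemma \ref{lem_8.6} directly.
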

\begin{proof} Since by Theorem \ref{thm:z-theta},  $z(\vartheta_{-t}\omega)(s)=z(\omega)(s-t)$, we have
\begin{eqnarray*}
r_2^2(\vartheta_{-t} \omega)&=& \sup_{s\leq 0}\vert z
(\vartheta_{-t} \omega)(s)\vert^2
e^{\nu\lambda_1s+\frac{3C^2}{\nu}\int^0_{s}\vert
z(\vartheta_{-t} \omega) (r)\vert^2_{\mathbb{L}^4}\,dr} \\
&=& \sup_{s\leq 0}\vert z (\omega)(s-t)\vert^2 e^{
\nu\lambda_1s+\frac{3C^2}{\nu}\int^0_{s}\vert
z(\omega)(r-t)\vert^2_{\mathbb{L}^4}\,dr} \\
&=& \sup_{s\leq 0}\vert z (\omega)(s-t)\vert^2
e^{\nu\lambda_1(s-t)+\frac{3C^2}{\nu}\int^{-t}_{s-t}\vert
z(\omega)(r)\vert^2_{\mathbb{L}^4}\,dr} e^{\nu\lambda_1t} \\
&=& \sup_{\sigma \leq -t}\vert z (\omega)(\sigma)\vert^2
e^{\nu\lambda_1\sigma+\frac{3C^2}{\nu}\int^{-t}_{\sigma}\vert
z(\omega)(r)\vert^2_{\mathbb{L}^4}\,dr} e^{\nu\lambda_1t} \\
\end{eqnarray*}
Hence, multiplying the above by $e^{-\nu\lambda_1t}
e^{\frac{3C^2}{\nu}\int_{-t}^0\vert
z(\omega)(r)\vert^2_{\mathbb{L}^4}\,dr}$ we get
\begin{equation*}
r_2^2(\vartheta_{-t} \omega)e^{-\nu\lambda_1t+
\frac{3C^2}{\nu}\int_{-t}^0\vert
z(\omega)(r)\vert^2_{\mathbb{L}^4}\,dr}\leq  \sup_{\sigma \leq
-t}\vert z (\omega)(\sigma)\vert^2
e^{\nu\lambda_1\sigma+\frac{3C^2}{\nu}\int^{0}_{\sigma}\vert
z(\omega)(r)\vert^2_{\mathbb{L}^4}\,dr}.
\end{equation*}
This,  together with  Lemma \ref{lem_8.5} concludes the proof in
the case of function $r_2$.  In the case of $r_1$, we have
\begin{equation*}
r_1^2(\vartheta_{-t} \omega)e^{-\nu\lambda_1t+
\frac{3C^2}{\nu}\int_{-t}^0\vert
z(\omega)(r)\vert^2_{\mathbb{L}^4}\,dr}=  \vert z
(\omega)(-t)\vert^2 e^{-\nu\lambda_1
t+\frac{3C^2}{\nu}\int^{0}_{-t}\vert
z(\omega)(r)\vert^2_{\mathbb{L}^4}\,dr}.
\end{equation*}
Thus, by Lemma \ref{lem_8.5} we infer that $r_1$ also belongs to
the class $\mathfrak{R}$. The argument in the case of function
$r_3$ is similar but  the completness sake we include it here.
From the first part of the proof we infer that
\begin{equation*}
r_3^2(\vartheta_{-t} \omega)e^{-\nu\lambda_1t+
\frac{3C^2}{\nu}\int_{-t}^0\vert
z(\omega)(r)\vert^2_{\mathbb{L}^4}\,dr}\leq
\int_{-\infty}^{-t}\vert z (\omega)(\sigma)\vert^2
e^{\nu\lambda_1\sigma+\frac{3C^2}{\nu}\int^{0}_{\sigma}\vert
z(\omega)(r)\vert^2_{\mathbb{L}^4}\,dr}\, d\sigma.
\end{equation*}
Since  by Lemma \ref{lem_8.6} $\int_{-\infty}^{0}\vert z
(\omega)(\sigma)\vert^2
e^{\nu\lambda_1\sigma+\frac{3C^2}{\nu}\int^{0}_{\sigma}\vert
z(\omega)(r)\vert^2_{\mathbb{L}^4}\,dr}\, d\sigma$ is finite, by
the Lebesgue monotone Theorem we conclude that
$$\int_{-\infty}^{-t}\vert z (\omega)(\sigma)\vert^2
e^{\nu\lambda_1\sigma+\frac{3C^2}{\nu}\int^{0}_{\sigma}\vert
z(\omega)(r)\vert^2_{\mathbb{L}^4}\,dr}\, d\sigma \to 0 \mbox{ as } t\to\infty.$$ The proof in the other cases is analogous.
The proof of the second part of the Proposition is trivial. This
concludes the proof.  \end{proof}

Now we are ready to state and prove the main result of this
paper.
\begin{theorem}\label{thm:5.1}
Suppose that the domain $\mathcal{O} \subset \mathbb{R}^2$ is a
Poincar{\'e} domain and  that the
 Assumption \ref{ass:A-01} is satisfied.  Consider  the metric DS
$\mathfrak{T}=\left(\hat{\Omega}(\xi,\mathrm{E}),\hat{\mathcal{F}},
\hat{\mathbb{P}},\hat{\vartheta}\right)$ from Proposition
\ref{prop:metricDS}, and the RDS  $\varphi$   on $H$ over
$\mathfrak{T}$ generated by  the 2D stochastic Navier-Stokes equations with additive noise \eqref{eqn_SNSE_add_noise} satisfying Assumption A1.
Then
the following properties hold.
 \begin{trivlist}
 \item[(i)] there exists a $\mathfrak{DR}$-absorbing set $B\in\mathfrak{DR}$;
 \item[(ii)] the RDS $\varphi$ is $\mathfrak{DR}$-asymptotically compact;
\item[(iii)] the family  $A$ of sets defined by $A(\omega)=\Omega_B(\omega)$ for all $\omega\in\Omega,$ is the minimal $\mathfrak{DR}$-attractor for $\varphi,$
is $\hat{\mathcal{F}}$-measurable, and
\begin{equation}\label{RACDFH}
A(\omega)=\overline{\bigcup_{C\subset H}\Omega_C(\omega)}\quad\hat{\mathbb{P}}-a.s.,
\end{equation}where the union in \eqref{RACDFH}
is made for all bounded and closed nonempty deterministic subsets $C$
of $H$.
 \end{trivlist}
 \end{theorem}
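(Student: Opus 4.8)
\noindent\textit{Plan of the proof.}\ The plan is to verify, with $\mathrm{X}=\mathrm{H}$ and $\mathfrak{D}=\mathfrak{DR}$, the three hypotheses of Theorem~\ref{Teorema1} — continuity of $\varphi$ (already in Theorem~\ref{thm_RDS}), $\mathfrak{DR}$-asymptotic compactness, and existence of a $\mathfrak{DR}$-absorbing set $B\in\mathfrak{DR}$ — and then to read off~(iii) from Theorem~\ref{Teorema1} together with Remark~\ref{RACDFremark}. Throughout I use the splitting $\varphi(t,\omega)x=v\big(t,z(\omega)\big)\big(x-z(\omega)(0)\big)+z(\omega)(t)$ with $z=z_\alpha$ for a fixed $\alpha\ge\alpha_0$ as in~\eqref{universalC}, and write $\Omega=\hat\Omega(\xi,\mathrm{E})$. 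For~(i) I would fix $D\in\mathfrak{DR}$, $\omega$, $t>0$, $x\in D(\vartheta_{-t}\omega)$, and feed the first inequality of Lemma~\ref{lem_8.3} on $[0,t]$ to the solution of~\eqref{eqn:4.10} started at $x-z(\vartheta_{-t}\omega)(0)$ with process $s\mapsto z(\vartheta_{-t}\omega)(s)=z(\omega)(s-t)$; the change of variable $s\mapsto s-t$ turns the right-hand side of~\eqref{eqn:5.10b} into a term $\le c\big(r(D(\vartheta_{-t}\omega))^2+|z(\omega)(-t)|^2\big)e^{-\nu\lambda_1 t+\frac{3C^2}{\nu}\int_{-t}^0|z(\omega)(s)|^2_{\mathbb{L}^4}ds}$, which tends to $0$ as $t\to\infty$ (the first summand by the definition of $\mathfrak{DR}$, the second since $r_1\in\mathfrak{R}$, Proposition~\ref{prop_class_R}), plus a $t$-independent quantity $\rho(\omega)^2$, obtained after bounding $\|g(s)\|^2_{\mathrm{V}'}\le c\big(1+|f|^2+|z(\omega)(s)|^2_{\mathrm{H}}+|z(\omega)(s)|^4_{\mathbb{L}^4}\big)$ via~\eqref{eqn:4.0} and $\mathrm{V}\embed\mathrm{H}\embed\mathrm{V}'$, which is finite by Lemma~\ref{lem_8.6} and Corollary~\ref{cor-ergodicestimates} and is, up to a constant, the sum of $r_3^2,r_4^2,r_5^2$ of Proposition~\ref{prop_class_R}. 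Hence $B(\omega):=\{x\in\mathrm{H}:|x|\le R(\omega)\}$ with $R^2:=c(1+\rho^2+r_1^2)$ is $\mathfrak{DR}$-absorbing; $R$ is measurable because $\omega\mapsto z(\omega)(s)$ is continuous (Corollary~\ref{cor:z-2}), so $B$ is a closed and bounded random set by Remark~\ref{rem-chueshov}, and $R^2\in\mathfrak{R}$ by Proposition~\ref{prop_class_R}, so $B\in\mathfrak{DR}$.

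For~(ii) I would run the energy-equality method of \cite{Brz_Li_2006b, Caraballo_L_R_2006}. Fix $D\in\mathfrak{DR}$, $\omega$, $t_n\to\infty$, $x_n\in D(\vartheta_{-t_n}\omega)$, and let $u_n$ be the solution of~\eqref{eqn_SNSE_add_noise} driven by $\omega$ on $[-t_n,\infty)$ with $u_n(-t_n)=x_n$, so that $y_n:=\varphi(t_n,\vartheta_{-t_n}\omega)x_n=u_n(0)=v_n(0)+z(\omega)(0)$ with $v_n=u_n-z(\omega)$. By~(i), for each fixed $T$ and all large $n$ one has $v_n(-T)\in B(\vartheta_{-T}\omega)-z(\omega)(-T)$, so $\{v_n(-T)\}$ is bounded in $\mathrm{H}$; together with the a priori bounds coming from~\eqref{eqn:5.10b} and the weak formulation~\eqref{eqn:4.14}, $\{v_n\}$ is bounded in $L^\infty(-T,0;\mathrm{H})\cap L^2(-T,0;\mathrm{V})\cap L^4(-T,0;\mathbb{L}^4(\mathcal{O}))$. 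A weak-compactness extraction and a diagonal argument over $T\in\mathbb{N}$ produce a subsequence (not relabelled) and a function $\bar v$ on $(-\infty,0]$ which is a complete solution of~\eqref{eqn:4.10} with process $z(\omega)$, with $v_n\rightharpoonup\bar v$ in $L^2(-T,0;\mathrm{V})$, weak-$*$ in $L^\infty(-T,0;\mathrm{H})$, and $v_n(0)\rightharpoonup\bar v(0)$ in $\mathrm{H}$ for every $T$; moreover $\bar v(0)=y-z(\omega)(0)$ where $y_n\rightharpoonup y$. Since strong convergence $y_n\to y$ is equivalent to $v_n(0)\to\bar v(0)$ in $\mathrm{H}$, and $v_n(0)\rightharpoonup\bar v(0)$ already holds, it suffices to prove $\limsup_n|v_n(0)|^2\le|\bar v(0)|^2$.

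That inequality I get from the energy identity~\eqref{eqn:5.3a} of Lemma~\ref{lem_8.3} written for $v_n$ on $[-T,0]$. Letting $n\to\infty$ with $T$ fixed: the term $|v_n(-T)|^2e^{-\nu\lambda_1 T}$ is $\le\eta(T):=c\big(R(\vartheta_{-T}\omega)^2+|z(\omega)(-T)|^2\big)e^{-\nu\lambda_1 T}$, which tends to $0$ as $T\to\infty$ since $R^2\in\mathfrak{R}$ (cf.~\eqref{eqn_R}) and by Proposition~\ref{prop_class_R}; the $g$- and $f$-integrals converge by weak $L^2(-T,0;\mathrm{V})$-convergence, since $g,f\in L^2(-T,0;\mathrm{V}')$ by~\eqref{eqn:4.0}; and the terms $-2\int_{-T}^0 e^{\nu\lambda_1 s}[v_n(s)]^2ds$ (with $[\,\cdot\,]$ as in~\eqref{def-newnorm}) and $\int_{-T}^0 e^{\nu\lambda_1 s}\langle B(v_n(s),z(s)),v_n(0)\rangle\,ds$ are passed to the limit exactly as in the asymptotic-compactness proof of \cite{Brz_Li_2006b}, using the continuity estimates for $b$ of Lemma~\ref{lem:form-b} and~\eqref{eqn:4.0a} and the compactness intrinsic to that method which, in particular, by-passes the lack of compactness of $\mathrm{V}\embed\mathrm{H}$. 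Taking $\limsup_n$ and comparing with the identity~\eqref{eqn:5.3a} written for $\bar v$ itself on $[-T,0]$, the integral contributions reorganise and one is left with $\limsup_n|v_n(0)|^2\le\eta(T)+|\bar v(0)|^2-|\bar v(-T)|^2e^{-\nu\lambda_1 T}\le\eta(T)+|\bar v(0)|^2$; sending $T\to\infty$ gives the claim, hence $v_n(0)\to\bar v(0)$ strongly and $\{y_n\}$ is precompact. I expect this limit passage — the nonlinear term and the treatment of the $L^2(\mathrm{H})$-type contribution in the non-compact setting — to be the main obstacle; the only genuinely new point relative to \cite{Brz_Li_2006b} is that, because $B\in\mathfrak{DR}$, the ``initial-data'' term $\eta(T)$ now vanishes as $T\to\infty$, which is what promotes the result to the whole class $\mathfrak{DR}$.

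For~(iii): by~(i)--(ii), Theorem~\ref{Teorema1} applies with $\mathrm{X}=\mathrm{H}$ and $\mathfrak{D}=\mathfrak{DR}$ (this class is nonempty and consists of closed bounded random sets), so $A(\omega):=\Omega_B(\omega)$ is a random $\mathfrak{DR}$-attractor, $\hat{\mathcal{F}}^u$-measurable. It is minimal, since any $\mathfrak{DR}$-attractor $A'$ attracts $B\in\mathfrak{DR}$ and is closed, whence $\Omega_B(\omega)\subset A'(\omega)$ for every $\omega$. Because every closed bounded deterministic subset of $\mathrm{H}$ has constant radius, which lies in $\mathfrak{R}$ by Corollary~\ref{cor-ergodicestimates}, Remark~\ref{RACDFremark} gives~\eqref{RACDFH}; and since $\Omega_C\subset\Omega_{C'}$ for $C\subset C'$, the union in~\eqref{RACDFH} may be taken over the countable family of balls $C_k=\{|x|\le k\}$, so $A=\overline{\bigcup_k\Omega_{C_k}}$. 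Finally each $\Omega_{C_k}(\omega)=\bigcap_m\overline{\bigcup_{t\ge m}\varphi(t,\vartheta_{-t}\omega,C_k)}$ is $\hat{\mathcal{F}}$-measurable: by continuity of $\varphi$ in $(t,x)$ and separability of $\mathrm{H}$, each of these closures is the closure of a fixed countable family of $\hat{\mathcal{F}}$-measurable $\mathrm{H}$-valued maps, hence an $\hat{\mathcal{F}}$-measurable set-valued map by Castaing's theorem \cite{C+V_1977}, and countable intersections and unions preserve measurability. Therefore $A$ is $\hat{\mathcal{F}}$-measurable, which completes the plan.
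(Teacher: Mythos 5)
Your proposal is correct and follows essentially the same route as the paper: point (i) via the energy inequality \eqref{eqn:5.10b} of Lemma \ref{lem_8.3} combined with Lemma \ref{lem_8.6} and Proposition \ref{prop_class_R} to build a ball-valued absorbing set whose radius lies in $\mathfrak{R}$, point (ii) by the weak-convergence/energy-equation method of \cite{Brz_Li_2006b,Caraballo_L_R_2006} with the decisive new observation that the ``initial-data'' term vanishes because the absorbing radius belongs to $\mathfrak{R}$, and point (iii) from Theorem \ref{Teorema1} and Remark \ref{RACDFremark}. The only deviations are cosmetic: you build a complete solution $\bar v$ on $(-\infty,0]$ by a function-space diagonal extraction where the paper builds a discrete negative trajectory $(y_{-k})$ of weak limits in $\mathrm{H}$ and then diagonalizes over $k$ in the energy identity, and you add an explicit argument upgrading the $\mathcal{F}^u$-measurability of Theorem \ref{Teorema1} to $\hat{\mathcal{F}}$-measurability, a point the paper passes over silently.
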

 \begin{proof}
 In view of Theorem \ref{Teorema1} and Remark \ref{RACDFremark},
 it is enough to show points (i)-(ii). We prove now point (i). The proof of point (ii) will be done in the next section.

Let $D $ be a random  set from the class $\mathfrak{DR}$.  Let  $r_D(\omega)$ be
the  radius of $D(\omega)$, i.e. $r_D(\omega):=\sup\{|x|_H:x\in
D(\omega)\}$, $\omega\in\Omega$.

Let $\omega \in \Omega$ be fixed. For given  $s \leq 0$ and $x \in
\mathrm{H}$, let $v$ be the solution of (\ref{eqn:4.10}) on time
interval $[s, \infty)$ with the initial condition $v(s)= x-z(s)$.
Applying  (\ref{eqn:5.10b}) with   $t=0, \tau =s \leq 0 $,  we get

\begin{eqnarray}\label{eqn:5.13}
\vert v(0)\vert ^2 &\leq&  2\vert x\vert ^2
e^{\nu\lambda_1s+\frac{3C^2}{\nu}\int_{s}^0\vert
z(r)\vert^2_{\mathbb{L}^4}\, \,dr}+2\vert z(s)\vert ^2
e^{\nu\lambda_1s+\frac{3C^2}{\nu}\int_{s}^0\vert
z(r)\vert^2_{\mathbb{L}^4}\, \,dr} \nonumber\\ &
+&\frac{3}{\nu}\int_{s}^0\{\Vert
g(t)\Vert^2_{\mathrm{V}^\prime}+\Vert f \Vert ^2_{\mathrm{V}^\prime}
\}e^{\nu\lambda_1t+\frac{3C^2}{\nu}\int_{t}^0\vert
z(r)\vert^2_{\mathbb{L}^4}\, dr}\,dt.
\end{eqnarray}
Set, for $\omega\in\Omega$,%
\begin{eqnarray}
 r_{11}(\omega)^2= 2 +\sup_{s\leq 0}\left\{2\vert
z(s)\vert ^2 e^{\nu\lambda_1s+\frac{3C^2}{\nu}\int_{s}^0\vert
z(r)\vert^2_{\mathbb{L}^4}\, \,dr} \right.
\nonumber\\
+\left.\frac{3}{\nu}\int_{s}^0\{\Vert
g(t)\Vert^2_{\mathrm{V}^\prime}+\Vert f \Vert ^2_{\mathrm{V}^\prime}
\}e^{\nu\lambda_1t+\frac{3C^2}{\nu}\int_{t}^0\vert
z(r)\vert^2_{\mathbb{L}^4}\, dr}\,dt \right\},
 \label{eqn:r_11}\\
\label{eqn:r_12} r_{12}(\omega)=\vert
z(0)(\omega)\vert_{\mathrm{H}}.
\end{eqnarray}

By Lemma \ref{lem_8.6} and Proposition \ref{prop_class_R}  we infer
that both $r_{11}$ and $r_{12}$ belong to $\mathfrak{R}$ and also
that $r_{13}:=r_{11}+r_{12}$  belongs to $\mathfrak{R}$ as well.
Therefore  the random set $B$ defined by   $B(\omega):=\{u \in
\mathrm{H}: \vert u\vert  \leq r_{13}(\omega)\}$ belongs to the
family $\mathfrak{DR}$.

We will show now that $B$ absorbs $D$. Let $\omega\in\Omega$ be
fixed. Since $r_D\in\mathfrak{R}$ there exists $t_D(\omega)\geq
0$, such that
$$r_0(\vartheta_{-t}\omega)^2 e^{-\nu\lambda_1t
+\frac{3C^2}{\nu}\int_{-t}^0|z(\omega)(s)|^2_{L^4}\, ds }\leq 1,
\mbox{ for } t\geq t_D(\omega).$$ Thus, if $x\in
D(\vartheta_{-t}\omega)$ and $s\geq t_D(\omega)$, then by
\eqref{eqn:5.13},  $\vert v(0,\omega;s,x-z(s))\vert \leq
r_{11}(\omega)$. Thus we infer that \begin{equation*} \vert
u(0,s;\omega,x)\vert  \leq \vert v(0,s;\omega,x-z(s))\vert + \vert
z(0)(\omega)\vert \leq r_{13}(\omega).
\end{equation*}
In other words, $u(0,s;\omega,x)\in B(\omega)$, for all $s\geq
t_D(\omega)$. This proves that $B$ absorbs $D$.
 \end{proof}

\section{Proof of the $\mathfrak{DR}$-asymptotical compactness property
of the  RDS $\varphi$ generated by stochastic NSEs} \label{sec:as_com}

We consider here  the RDS  $\varphi$ over the metric DS
$\left(\hat{\Omega}(\xi,\mathrm{E}),\hat{\mathcal{F}},\hat{\mathbb{P}},\hat{\vartheta}\right)$, from Proposition \ref{prop:metricDS} and the family $\mathfrak{DR}$ defined in Definition \ref{def-R+D}.
The main result in  this
section is the following result.

\begin{proposition}\label{prp:5.1}  Assume that for each random set $D$   belonging to $\mathfrak{DR}$,    there
exists a   random set $B$ belonging to $\mathfrak{DR}$ such
that $B$ absorbs $D$. Then, the RDS $\varphi$ is
$\mathfrak{DR}$-asymptotically compact.
\end{proposition}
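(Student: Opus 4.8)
The plan is to run the ``energy equation'' argument of Ball and Rosa, in the stochastic and spatially unbounded form used in \cite{Brz_Li_2006b}, but carried out with respect to the family $\mathfrak{DR}$ instead of the class of sets considered there. Fix $D\in\mathfrak{DR}$, $\omega\in\hat{\Omega}(\xi,\mathrm{E})$, a sequence $t_n\to\infty$, and points $x_n\in D(\vartheta_{-t_n}\omega)$. Using the representation \eqref{eqn:4.12} together with the stationarity \eqref{eqn_z-stat} of the Ornstein--Uhlenbeck process (so that $z_\alpha(\vartheta_{-t_n}\omega)(t_n)=z_\alpha(\omega)(0)$, by Theorem~\ref{thm:z-theta}), one writes $\varphi(t_n,\vartheta_{-t_n}\omega,x_n)=v_n(0)+z_\alpha(\omega)(0)$, where $v_n$ is the unique solution of \eqref{eqn:4.10} on $[-t_n,0]$ with noise $z=z_\alpha(\omega)$ and initial value $v_n(-t_n)=x_n-z_\alpha(\omega)(-t_n)$. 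Since $z_\alpha(\omega)(0)\in\mathrm{H}$ does not depend on $n$, it suffices to prove that $\{v_n(0):n\in\mathbb{N}\}$ is precompact in $\mathrm{H}$.

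First I would derive, for every fixed $T>0$, bounds on $v_n$ over $[-T,0]$ that are uniform in $n$. Applying \eqref{eqn:5.10b} on $[-t_n,-T]$, the contribution of the initial value equals $|x_n-z_\alpha(\omega)(-t_n)|^2$ times an exponential weight; bounding $|x_n|\le r_D(\vartheta_{-t_n}\omega)$ with $r_D\in\mathfrak{R}$ and invoking Lemma~\ref{lem_8.5}, this term tends to $0$ as $n\to\infty$, while the remaining integral terms are bounded, uniformly in $n$ and $T$, by a finite quantity $M(\omega)$ thanks to Lemma~\ref{lem_8.6} and Corollary~\ref{cor-ergodicestimates}; hence $\limsup_n|v_n(-T)|^2\le M(\omega)$ for all large $T$ --- this is precisely where the $\mathfrak{DR}$-absorbing hypothesis and the definition of $\mathfrak{DR}$ are used. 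Feeding this into \eqref{eqn:5.10b} on $[-T,0]$ and combining with the standard energy estimate for \eqref{eqn:4.10} (test with $v_n$, use \eqref{eqn:b01} and the Ladyzhenskaya-type bound implicit in \eqref{eqn:4.0}), one obtains $\sup_{n\ge n_T}\big(\sup_{[-T,0]}|v_n|+\|v_n\|_{L^2(-T,0;\mathrm{V})}+\|v_n\|_{L^4(-T,0;\mathbb{L}^4(\mathcal{O}))}\big)<\infty$, and from the equation $\partial_t v_n$ is bounded in $L^2(-T,0;\mathrm{V}^\prime)$. A diagonal extraction over $T\in\mathbb{N}$ then produces a subsequence (not relabelled) and a function $\bar v\in L^\infty_{\mathrm{loc}}((-\infty,0];\mathrm{H})\cap L^2_{\mathrm{loc}}((-\infty,0];\mathrm{V})\cap L^4_{\mathrm{loc}}((-\infty,0];\mathbb{L}^4(\mathcal{O}))$ with $v_n\rightharpoonup\bar v$ in the corresponding weak/weak-$\ast$ topologies on each $[-T,0]$; Aubin--Lions on each bounded piece $\mathcal{O}\cap\{|x|<R\}$ gives strong convergence $v_n\to\bar v$ in $L^2_{\mathrm{loc}}((-\infty,0];L^2(\mathcal{O}\cap\{|x|<R\}))$, while $v_n(s)\rightharpoonup\bar v(s)$ weakly in $\mathrm{H}$ for every $s\le0$; in particular $v_n(0)\rightharpoonup\eta:=\bar v(0)$ in $\mathrm{H}$, and $\bar v$ is (by uniqueness, Theorem~\ref{thm:4.2}) the solution of \eqref{eqn:4.10} on $(-\infty,0]$ with $z=z_\alpha(\omega)$.

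Since $v_n(0)\rightharpoonup\eta$ weakly in the Hilbert space $\mathrm{H}$, it now suffices to show $\limsup_n|v_n(0)|\le|\eta|$. For this I would apply the energy identity \eqref{eqn:5.3a} to $v_n$ with $t=0$, $\tau=-T$, and to $\bar v$ with the same endpoints, and take $\limsup_n$ of the resulting expressions. The linear terms $\langle g(s),v_n(s)\rangle$ and $\langle f,v_n(s)\rangle$, integrated against the fixed weight $e^{\nu\lambda_1 s}$, converge by the weak convergence in $L^2(-T,0;\mathrm{V})$; the quadratic term $\int_{-T}^{0}e^{\nu\lambda_1 s}[v_n(s)]^2\,ds$ is handled by weak lower semicontinuity (its $\|\cdot\|^2$-part being convex and continuous on $L^2(-T,0;\mathrm{V})$, coercive there by \eqref{2.0}, and its $|\cdot|^2$-part by the local strong convergence), which gives the inequality in the favourable direction; and the nonlinear term is passed to the limit by first using the antisymmetry \eqref{eqn:b01} so that every surviving summand carries a factor $z_\alpha(\omega)(s)\in\mathbb{L}^4(\mathcal{O})$, and then splitting the $x$-integral at $|x|=R$: on $\mathcal{O}\cap\{|x|<R\}$ one uses the local strong convergence, and on the complement the uniform integrability in $\mathbb{L}^4(\mathcal{O})$ of the (compact) set $\{z_\alpha(\omega)(s):s\in[-T,0]\}$ together with the uniform $L^4(\mathbb{L}^4)$-bound on $v_n$ makes the tail uniformly small. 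Combining the two identities yields $\limsup_n|v_n(0)|^2\le|\eta|^2+e^{-\nu\lambda_1 T}\big(\limsup_n|v_n(-T)|^2-|\bar v(-T)|^2\big)\le|\eta|^2+e^{-\nu\lambda_1 T}M(\omega)$, and letting $T\to\infty$ gives $\limsup_n|v_n(0)|\le|\eta|$. Hence $v_n(0)\to\eta$ strongly in $\mathrm{H}$, and $\{v_n(0)\}$ is precompact, as required.

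The main obstacle is the non-compactness of the embedding $\mathrm{V}\hookrightarrow\mathrm{H}$ on the unbounded domain $\mathcal{O}$, which rules out a direct compactness extraction and is the reason one must use the energy-equation method. Within that method the two delicate points are: (a) the passage to the limit in the nonlinear term, which works only because, after antisymmetrising, each term contains the spatially integrable factor $z$ --- this is exactly where Assumption~\ref{ass:A-01} ($\mathrm{K}\subset\mathbb{L}^4(\mathcal{O})$) is essential --- allowing the spatial tail to be cut off uniformly in $n$; and (b) the uniform-in-$n$ control of the energy $|v_n(-T)|^2$, equivalently of the solution at the receding time $-t_n$, so that the remote initial datum $x_n\in D(\vartheta_{-t_n}\omega)$ contributes negligibly, which is precisely what the class $\mathfrak{R}$/$\mathfrak{DR}$, through Lemmas~\ref{lem_8.5}--\ref{lem_8.6} and Corollary~\ref{cor-ergodicestimates}, is designed to deliver.
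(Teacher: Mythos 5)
Your argument is, in substance, the same energy--equation method the paper uses: reduce to the auxiliary variable $v_n(0)=\varphi(t_n,\vartheta_{-t_n}\omega,x_n)-z(0)$, extract a weak limit, and upgrade weak to strong convergence by comparing the weighted energy identity \eqref{eqn:5.3a} for $v_n$ on $[-T,0]$ with the same identity for a limiting trajectory, using weak lower semicontinuity for the $[\,\cdot\,]^2$-term, the $\mathbb{L}^4$-regularity of $z$ for the nonlinear term, and the class $\mathfrak{R}$ (via \eqref{eqn:5.10b}, Lemmas \ref{lem_8.5}--\ref{lem_8.6}) to kill the contribution of the remote initial data. The one genuine difference is how the limiting object is produced: the paper builds a discrete negative trajectory $y_{-k}\in B(\vartheta_{-k}\omega)$ by iterated weak-limit extraction at integer times, using the cocycle property and the weak-continuity lemmas of \cite{Brz_Li_2006b}, and then works with the solutions $v_k$ emanating from $y_{-k}-z(-k)$; you instead obtain a complete trajectory $\bar v$ on $(-\infty,0]$ from uniform a priori bounds, an Aubin--Lions/diagonal argument and uniqueness. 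Your route avoids the cocycle bookkeeping of Step II of the paper but requires the (routine, yet not free) uniform estimates on $\partial_t v_n$ and the identification of $\bar v$ as a solution; the paper's route needs neither, at the price of quoting Lemmas 7.1--7.2 of \cite{Brz_Li_2006b}. Both deliver the same final inequality and are acceptable.

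One intermediate claim should be corrected: the assertion that $\limsup_n|v_n(-T)|^2\le M(\omega)$ \emph{uniformly in} $T$ does not follow from \eqref{eqn:5.10b} and is in general false --- already $|z(\omega)(-T)|$ grows like $O(T^{1/2})$ by \eqref{eqn:A-sublinear}, and the integral terms in \eqref{eqn:5.10b} evaluated at time $-T$ grow polynomially in $T$, so no $T$-independent bound is available. Fortunately your final display only uses $e^{-\nu\lambda_1 T}\bigl(\limsup_n|v_n(-T)|^2-|\bar v(-T)|^2\bigr)$, and it suffices that $e^{-\nu\lambda_1 T}\limsup_n|v_n(-T)|^2\to 0$ as $T\to\infty$ (dropping the nonpositive term $-|\bar v(-T)|^2$). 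This weaker statement is exactly what the class $\mathfrak{R}$ and Lemmas \ref{lem_8.5}--\ref{lem_8.6} give, and is what the paper proves in the form \eqref{eqn:c6}, bounding the weighted quantity by $\int_{-\infty}^{-T}h(s)\,ds$ for a fixed $h\in L^1(-\infty,0)$. With that replacement your proof is complete.
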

%%% END

Let us recall that the RDS $\varphi$ is independent of the
auxiliary parameter $\alpha \in \mathbb{N}$.  For reasons that
will become clear in the course of the proof we choose $\alpha$
such that $\mathbb{E}\vert z_\alpha(0) \vert_{\mathbb{L}^4} ^2
\leq \frac{\nu^2\lambda_1}{6C^2}$, where $z_{\alpha}(t)$, $t\in
\mathbb{R}$ is the Ornstein-Uhlenbeck process from section
\ref{sec-appl-sNSEs}, $C>0$ is a certain universal constant. Such
a choice is possible because of Proposition \ref{prop:metricDS}.
Let us choose $\alpha\in \mathbb{N}$ such that the condition
(\ref{universalC}) is satisfied.

For simplicity of notation we will denote the space
$\hat{\Omega}(\xi,\mathrm{E})$ simply by $\Omega$ and the process
$z_{\alpha}(t)$, $t\in \mathbb{R}$ by $z(t)$, $t\in \mathbb{R}$.

\begin{proof} Suppose that  $D  $ is a closed random  set from the class $\mathfrak{DR}$ and
$B\in\mathfrak{DR}$ is a closed random set which absorbs $D$. We fix $\omega\in\Omega$.  Let us take  an increasing sequence of positive numbers
$(t_n)_{n=1}^\infty$   such that $t_n \to \infty$ and an
   $\mathrm{H}$-valued sequence $(x_n)_n$ such that $x_n \in D(\vartheta_{-t_n}\omega)$, for all $n\in\mathbb{N}$.

\noindent \textbf{Step I.} Reduction. Since $B$ absorbs $D$,
$\varphi(t_n,\vartheta_{-t_n}\omega,D(\vartheta_{-t_n}\omega))
\subset B(\omega)$ for $n\in \mathbb{N}$ sufficiently large.
Since $B(\omega)$ as a  bounded set in $H$ is   weakly
pre-compact in $H$,  without loss of generality we may assume that
$$\varphi(t_n,\vartheta_{-t_n}\omega),D(\vartheta_{-t_n}\omega))\subset B(\omega)$$ for all
$n\in \mathbb{N}$  and,  for some $y_0 \in H$,
\begin{equation}\label{eqn:5.23}
\varphi(t_{n},\vartheta_{-t_{n}}\omega,x_{n})\rightharpoonup  y_0
\quad \hbox{weakly in }\mathrm{H}.
\end{equation}
Our aim  is to prove that for some subsequence
\begin{equation}\label{strong}\varphi(t_{n^\prime},\vartheta_{-t_{n^\prime}}\omega, x_{n^\prime})
\rightarrow y_0 \hbox{ strongly in }\mathrm{H}.
\end{equation}

Since $z(0) \in \mathrm{H}$, then
\begin{equation}\label{eqn:5.23-a}
\varphi(t_{n},\vartheta_{-t_{n}}\omega, x_{n}-z(0)) \rightarrow
y_0-z(0) \quad \hbox{weakly in }\mathrm{H}.
\end{equation}
In particular,
\begin{equation}\label{eqn:5.23-b}
\vert y_0-z(0)\vert \leq \liminf_{n\rightarrow\infty}
\vert\varphi(t_{n},\vartheta_{-t_{n}}\omega, x_{n}-z(0))\vert.
\end{equation}

Arguing as in \cite{Brz_Li_2006b},  we can show that in order to
prove \eqref{strong} it is enough to prove that for some
subsequence $\{{n^\prime}\}\subset \mathbb{N}$
\begin{equation}\label{eqn:5.24}
\vert y_0-z(0)\vert \geq \limsup_{n^\prime\rightarrow \infty}\vert
\varphi(t_{n^\prime},\vartheta_{-t_{n^\prime}}\omega,x_{n^\prime})-z(0)\vert.
\end{equation}

 \noindent \textbf{Step II.} Construction of a negative trajectory, i.e. a
 sequence $(y_n)_{n=-\infty}^0$ such that $y_n \in
B(\theta_n\omega)$, $n \in \mathbb{Z}^-$, and
$$y_k=\varphi(k-n,\theta_n\omega,y_n), \;n<k\le 0.$$

Since $B$ absorbs $D$, there exists a constant $N_1(\omega)\in
\mathbb{N}$, such that
$$\{\varphi(-1+t_{n},
\vartheta_{1-t_{n}}\vartheta_{-1}\omega,x_{n}):n \geq N_1(\omega)\}
\subset B(\vartheta_{-1}\omega).$$ Hence we can find  a
subsequence $\{n^{\prime}\} \subset \mathbb{N}$ and $y_{-1} \in
B(\vartheta_{-1}\omega)$ such that
\begin{equation}\label{eqn:6.400}
\varphi(-1+t_{n^\prime},\vartheta_{-t_{n^\prime}}\omega, x_{n^\prime})
\rightharpoonup y_1 \hbox{ weakly in }\mathrm{H}.
\end{equation}
Let us observe that the cocycle property, with $t=1,
s=t_{n^\prime}-1$, and $\omega$ being replaced by $ \vartheta
_{-t_{n^\prime}}\omega$, reads as follows: $$\varphi(t_{n^\prime},
\vartheta_{-t_{n^\prime}}\omega)=\varphi(1, \vartheta_{-1}\omega)
\varphi(-1+t_{n^\prime}, \vartheta_{-t_{n^\prime}}\omega).$$

Hence, by Lemma %\ref{lem:5.2}
\cite[Lemma 7.2]{Brz_Li_2006b}, from  (\ref{eqn:5.23}) and
\eqref{eqn:6.400} we infer that $\varphi(1,
\vartheta_{-1}\omega,y_1)=y_0$. By induction, for each
$k=1,2,\ldots$, we can construct  a subsequence
 $\{n^{(k)}\}\subset \{n^{(k-1)}\}$ and  $y_{-k}
\in B(\vartheta_{-k}\omega)$,
 such that $\varphi(1, \vartheta_{-k}\omega,y_{-k})=y_{-k+1}$ and
\begin{equation}\label{eqn:5.1a}
\varphi(-k+t_{n^{(k)}},\vartheta_{-t_{n^{(k)}}}\omega,x_{n^{(k)}})
\rightharpoonup y_{-k}  \quad \hbox{weakly in }\mathrm{H}, \quad
\hbox{as } n^{(k)}\rightarrow \infty.
\end{equation}

 As above,  the  cocycle
property with $t=k$, $s=t_{n^{(k)}}$ and  $ \omega$ being replaced
by $\vartheta_{-t_{n^{(k)}}}\omega$, yields
\begin{equation}\label{eqn:5.4a}
\varphi(t_{n^{(k)}},\vartheta_{-t_{n^{(k)}}}\omega)=\varphi(k,
\vartheta_{-k}\omega)\varphi(t_{n^{(k)}}-k,\vartheta_{-t_{n^{(k)}}}\omega),
\;
 k\in \mathbb{N}.\end{equation}
 Hence, from
(\ref{eqn:5.1a}) by applying   \cite[Lemma 7.1]{Brz_Li_2006b}, we
get
\begin{eqnarray}\label{eqn:5.2a}
 y_0&=&\hbox{w-}\lim_{n^{(k)}\rightarrow\infty}
 \varphi(t_{n^{(k)}},\vartheta_{-t_{n^{(k)}}}\omega, x_{n^{(k)}})\\
 &=&\hbox{w-}\lim_{n^{(k)}\rightarrow\infty}
 \varphi(k,\vartheta_{-k}\omega,\varphi(t_{n^{(k)}}-k,\vartheta_{-t_{n^{(k)}}}\omega,x_{n^{(k)}}))
 \nonumber\\
&=&\varphi(k,\vartheta_{-k}\omega,(\hbox{w-}\lim_{n^{(k)}\rightarrow\infty}
\varphi(t_{n^{(k)}}-k,\vartheta_{-t_{n^{(k)}}}\omega)x_{n^{(k)}}))
=\varphi(k,\vartheta_{-k}\omega,y_{-k}),
\nonumber
\end{eqnarray}
where $\hbox{w-}\lim$ denotes the limit in  the weak topology on
$\mathrm{H}$. The same proof yields a more general property:
\begin{equation*}\varphi(j,
\vartheta_{-k}\omega,y_{-k})=y_{-k+j},\quad \mbox {if } 0\leq j \leq k.
\end{equation*}

Before we continue our proof, let us point out that,
\eqref{eqn:5.2a} means precisely that $y_0=u(0,-k;\omega,y_{-k})$,
where $u$ is defined by \eqref{eqn:4.12}.

\noindent \textbf{Step III.} Proof of (\ref{eqn:5.24}).
 From now on, until explicitly stated, we fix
$k\in \mathbb{N}$, and  we will consider problem \eqref{eqn_SNSE_add_noise} %{eqn:4.7}
on the time interval $[-k,0]$. From (\ref{eqn:4.12}) and
(\ref{eqn:5.4a}), with $t=0$ and $ s=-k$, we have
\begin{eqnarray}\label{eqn:*1}
&&\hspace{-2truecm}\lefteqn{\vert \varphi(t_{n^{(k)}},\vartheta_{-t_{n^{(k)}}}\omega,
x_{n^{(k)}} -z(0))\vert^2 }\\ 
&=&\vert \varphi(k,
\vartheta_{-k}\omega,\varphi(t_{n^{(k)}}-k,\vartheta_{-t_{n^{(k)}}}\omega,
x_{n^{(k)}}-z(0)))\vert^2 \nonumber\\
&=&\vert v(0,\omega;-k,
\varphi(t_{n^{(k)}}-k,\vartheta_{-t_{n^{(k)}}}\omega,
x_{n^{(k)}})-z(-k))\vert^2.
\nonumber
\end{eqnarray}

Let $v$ be the  solution to (\ref{eqn:4.10}) on $[-k, \infty)$
with $z=z_\alpha(\cdot, \omega)$ and the initial condition at time
$-k$: $v(-k)=\varphi(t_{n^{(k)}}-k,\vartheta_{-t_{n^{(k)}}}\omega,
x_{n^{(k)}})-z(-k)$. In other words,
$$v(s)=v\Bigl(s,-k;\omega,\varphi(t_{n^{(k)}}-k,\vartheta_{-t_{n^{(k)}}}\omega,
x_{n^{(k)}})-z(-k)\Bigr), \quad s\geq-k.$$ From
(\ref{eqn:5.3a}) with $t=0$ and $ \tau=-k$ we infer that

\begin{eqnarray}\label{eqn:4.9c}
%\begin{array}{l}
&&\hspace{-1truecm}\lefteqn{\vert\varphi(t_{n^{(k)}},\vartheta_{-t_{n^{(k)}}}\omega,
x_{n^{(k)}}) -z(0)\vert^2}
\\ &=&e^{-\nu \lambda_1 k}\vert
\varphi(t_{n^{(k)}}-k,\vartheta_{-t_{n^{(k)}}}\omega,x_{n^{(k)}})-z(-k)\vert
^2
\nonumber\\
&+&\, 2\int_{-k}^0 e^{\nu \lambda_1 s}\left( b(v(s),z(s)),v(s))\right.
+\langle g(s),v(s)\rangle +\left. \langle f
,v(s)\rangle-[v(s)]^2\right)\,ds.
%\end{array}
\nonumber
\end{eqnarray}

To finish the proof it is enough to find a non-negative function $h \in
L^1(-\infty,0)$ such that
\begin{equation}\label{eqn:5.15b}
\limsup_{n^{(k)}\rightarrow\infty}\vert
\varphi(t_{n^{(k)}},\vartheta_{-t_{n^{(k)}}}\omega,x_{n^{(k)}})-z(0)\vert
^2\leq \int_{-\infty}^{-k}\, h(s)\, ds +\vert y_0-z(0)\vert ^2.
\end{equation}
For, if we define the diagonal process
$(m_j)_{j=1}^\infty$ by $m_j=j^{(j)}$, $j \in \mathbb{N}$, then for
each $k\in \mathbb{N}$, the sequence $(m_j)_{j=k}^\infty$ is a
subsequence of the sequence $(n^{(k)})$ and hence by
\eqref{eqn:5.15b}, $\limsup_{j}\vert
\varphi(t_{m_j},\vartheta_{-t_{m_j}}\omega,x_{m_j})-z(0)\vert ^2\leq
\int_{-\infty}^{-k}\, h(s)\, ds+\vert y_0-z(0)\vert ^2$. Taking the
$k\to \infty$ limit in the last inequality we infer that
$\limsup_{j}\vert
\varphi(t_{m_j},\vartheta_{-t_{m_j}}\omega,x_{m_j})-z(0)\vert ^2\leq
\vert y_0-z(0)\vert ^2$ which proves claim \eqref{eqn:5.24}.

\noindent \textbf{Step IV.} Proof of (\ref{eqn:5.15b}). We begin
with estimating  the first term on the RHS of \eqref{eqn:4.9c}. If
$-t_{n^{(k)}}<-k $, then by \eqref{eqn:4.12} and \eqref{eqn:5.10b}
we infer that
\begin{eqnarray}\label{eqn:5.10d}
&&\hspace{-1 truecm}\lefteqn{
\vert\varphi(t_{n^{(k)}}-k,\vartheta_{-t_{n^{(k)}}}\omega,x_{n^{(k)}})-z(-k)\vert^2e^{-\nu
\lambda_1 k}} \nonumber \\
&&\hspace{3 truecm}\lefteqn{ =\, \vert v(-k,
-t_{n^{(k)}};\vartheta_{-k}\omega,x_{n^{(k)}}-z(-t_{n^{(k)}}))\vert
^2e^{-\nu \lambda_1 k}} \nonumber\\
&\leq &e^{-\nu \lambda_1 k}\Bigl\{\vert
x_{n^{(k)}}-z(-t_{n^{(k)}})\vert^2
e^{-\nu\lambda_1(t_{n^{(k)}}-k)+\frac{3C^2}{\nu}\int^{-k}_{-t_{n^{(k)}}}\vert
z(s)\vert^2_{\mathbb{L}^4}\, ds} \nonumber\\
 &\lefteqn{ \frac{3}{\nu
}\int^{-k}_{-t_{n^{(k)}}}\Bigl[\Vert
g(s)\Vert_{\mathrm{V}^\prime}^2+\Vert f
\Vert_{\mathrm{V}^\prime}^2\Bigr]
e^{-\nu\lambda_1(-k-s)+\frac{3C^2}{\nu}\int_s^{-k}(\vert z(\zeta
)\vert^2_{\mathbb{L}^4})d\zeta }ds\Bigr\}} \\ &\leq&
2I_{n^{(k)}}+2I\!I_{n^{(k)}}+\frac{3}{\nu}I\!I\!I_{n^{(k)}}+\frac{3}{\nu
}\Vert f \Vert_{\mathrm{V}^\prime}^2\, I\!V_{n^{(k)}}, \nonumber
\end{eqnarray}

where
\begin{eqnarray*}
I_{n^{(k)}} &=&\vert x_{n^{(k)}}\vert^2e^{-\nu\lambda_1t_{n^{(k)}}
+\frac{3C^2}{\nu}\int^{-k}_{-t_{n^{(k)}}}\vert z(s)\vert^2_{\mathbb{L}^4}\,ds},\\
I\!I_{n^{(k)}}&=&\vert
z(-t_{n^{(k)}})\vert^2e^{-\nu\lambda_1t_{n^{(k)}}
+\frac{3C^2}{\nu}\int^{-k}_{-t_{n^{(k)}}}\vert z(s)\vert^2_{\mathbb{L}^4}\,ds},\\
I\!I\!I_{n^{(k)}}&=&\int^{-k}_{-\infty}\Vert
g(s)\Vert_{\mathrm{V}^\prime}^2e^{\nu\lambda_1s+\frac{3C^2}{\nu}\int_s^{-k}\vert
z(\zeta )\vert^2_{\mathbb{L}^4}\,d\zeta }\,ds, \\
I\!V_{n^{(k)}} &=&
\int^{-k}_{-\infty}e^{\nu\lambda_1s+\frac{3C^2}{\nu}\int_s^{-k}\vert
z(\zeta )\vert^2_{\mathbb{L}^4}\, d\zeta }\,ds.
\end{eqnarray*}

First, we will  find a non-negative function $h \in
L^1(-\infty,0)$ such that

\begin{equation}\label{eqn:c6}\begin{aligned}
\hbox{\hspace{-0.8truecm}} &\limsup_{n^{(k)}\rightarrow \infty}
\vert
\varphi(t_{n^{(k)}}-k,\vartheta_{-t_{n^{(k)}}}\omega,x_{n^{(k)}})-z(-k)\vert
^2e^{-\nu \lambda_1 k} \cr & \hbox{\hspace{5.8truecm}}
  \leq \int_{-\infty}^{-k} h(s)\, ds, \quad k \in \mathbb{N}.
\end{aligned}
\end{equation}

For this we will need one more auxiliary result.

\begin{lemma}\label{lem_8.4}%{lem:e1}
$\limsup\limits_{n^{(k)}\to \infty}I_{n^{(k)}}=0$.
\end{lemma}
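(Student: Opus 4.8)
The plan is to recognize $I_{n^{(k)}}$ as a quantity dominated by exactly the expression that enters the definition of the class $\mathfrak{R}$, evaluated at the radius function of the set $D$, and then simply to let $n^{(k)}\to\infty$. Note that this lemma uses only that $D\in\mathfrak{DR}$ (not the absorption of $D$ by $B$), because $I_{n^{(k)}}$ involves no object other than $x_{n^{(k)}}$, which lies in $D(\vartheta_{-t_{n^{(k)}}}\omega)$.

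First I would invoke the membership $x_{n^{(k)}}\in D(\vartheta_{-t_{n^{(k)}}}\omega)$: since $D\in\mathfrak{DR}$, its radius function $r_D$ belongs to $\mathfrak{R}$ and $|x_{n^{(k)}}|_H\le r_D(\vartheta_{-t_{n^{(k)}}}\omega)$. Next, since $k\in\mathbb{N}$ is fixed and $t_{n^{(k)}}\to\infty$ (so that $-t_{n^{(k)}}<-k$ for $n^{(k)}$ large), and since $|z(s)|^2_{\mathbb{L}^4}\ge 0$, I would split the exponent,
$$
\int_{-t_{n^{(k)}}}^{-k}|z(s)|^2_{\mathbb{L}^4}\,ds=\int_{-t_{n^{(k)}}}^{0}|z(s)|^2_{\mathbb{L}^4}\,ds-\int_{-k}^{0}|z(s)|^2_{\mathbb{L}^4}\,ds\le\int_{-t_{n^{(k)}}}^{0}|z(s)|^2_{\mathbb{L}^4}\,ds,
$$
the inequality holding because the subtracted term is a finite non-negative constant depending only on $k$ and $\omega$. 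Combining these two facts gives, for all sufficiently large $n^{(k)}$,
$$
0\le I_{n^{(k)}}\le r_D(\vartheta_{-t_{n^{(k)}}}\omega)^2\,e^{-\nu\lambda_1 t_{n^{(k)}}+\frac{3C^2}{\nu}\int_{-t_{n^{(k)}}}^{0}|z(\omega)(s)|^2_{\mathbb{L}^4}\,ds}.
$$

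Finally, since $(t_n)$ increases to $+\infty$ and $(n^{(k)})$ is a subsequence of $\mathbb{N}$, we have $t_{n^{(k)}}\to\infty$; hence the defining property \eqref{eqn_R} of the class $\mathfrak{R}$, applied to $r_D$, shows the right-hand side above tends to $0$. As $I_{n^{(k)}}\ge 0$, this forces $\limsup_{n^{(k)}\to\infty}I_{n^{(k)}}=0$. There is no genuine obstacle here; the only point needing a word of care is that the integral defining $I_{n^{(k)}}$ runs up to $-k$ rather than to $0$, which is harmless because dropping the non-negative term $\int_{-k}^{0}|z(\omega)(s)|^2_{\mathbb{L}^4}\,ds$ only enlarges the exponential and thus keeps the estimate valid, while the convergence to zero of the whole expression is built directly into the definition of $\mathfrak{R}$.
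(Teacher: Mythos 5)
Your proof is correct, but it follows a different route than the paper's. The paper's own argument for Lemma \ref{lem_8.4} does not touch the class $\mathfrak{R}$ at all: it invokes the ergodic (Strong Law of Large Numbers) property built into $\hat{\Omega}(\xi,\mathrm{E})$ together with the choice \eqref{universalC} of $\alpha$ to get, for $n^{(k)}$ large, the bound $\frac{3C^2}{\nu}\int_{-t_{n^{(k)}}}^{-k}\vert z(s)\vert^2_{\mathbb{L}^4}\,ds < \frac{\nu\lambda_1}{2}\bigl(t_{n^{(k)}}-k\bigr)$ (as in \eqref{eqn:5.13c}), and then asserts a uniform bound $\vert x_{n^{(k)}}\vert\leq\rho_1$, so that $I_{n^{(k)}}\leq \rho_1^2 e^{-\frac{\nu\lambda_1}{2}(t_{n^{(k)}}-k)}\to 0$. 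You instead bound $\vert x_{n^{(k)}}\vert_H$ by the radius $r_D(\vartheta_{-t_{n^{(k)}}}\omega)$, enlarge the exponent by replacing $\int_{-t_{n^{(k)}}}^{-k}$ with $\int_{-t_{n^{(k)}}}^{0}$ (legitimate, since the integrand is non-negative), and then apply the defining property \eqref{eqn_R} of $\mathfrak{R}$ along $t_{n^{(k)}}\to\infty$. The comparison is instructive: the paper's argument is in the spirit of \cite{Brz_Li_2006b}, where the sets $D$ are uniformly bounded, and the uniform constant $\rho_1$ is only automatic in that setting, since for a general $D\in\mathfrak{DR}$ the radii $r_D(\vartheta_{-t_{n^{(k)}}}\omega)$ may grow with $n^{(k)}$ (the class $\mathfrak{R}$ only controls them after multiplication by the exponential weight). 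Your version uses exactly the structure of $\mathfrak{DR}$ and therefore covers the full generality claimed in Proposition \ref{prp:5.1} without any extra boundedness assumption, at the price of giving up the explicit exponential decay rate that the paper's SLLN-based estimate provides; it also bypasses the ergodic estimate entirely, which is aesthetically cleaner here since that estimate is still needed elsewhere (e.g.\ to show that constants belong to $\mathfrak{R}$ and in the treatment of the remaining terms $I\!I$--$I\!V$).
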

\begin{proof}
Let us recall that, $\alpha \in \mathbb{N}$, $z(t)=z_{\alpha}(t)$,
$t\in \mathbb{R}$, is  the Ornstein-Uhlenbeck process from section
\ref{sec-appl-sNSEs}, and $\mathbb{E}\vert z(0)
\vert_{\mathrm{X}}^2=\mathbb{E}\vert z_\alpha(0)
\vert_{\mathrm{X}}^2 < \frac{\nu^2\lambda_1}{6C^2}$. Let us also
recall that the space $\hat{\Omega}(\xi,\mathrm{E})$ was
constructed in such a way that
$$
\lim_{{n^{(k)}}\rightarrow
\infty}\frac{1}{-k-(-t_{n^{(k)}})}\int_{-t_{n^{(k)}}}^{-k}\vert
z_\alpha(s)\vert_{\mathrm{X}}^2 ds =\mathbb{E}\vert z(0)
\vert_{\mathrm{X}} ^2<\infty.
$$
Therefore, since the embedding $\mathrm{X} \embed
\mathbb{L}^4(\mathcal{O})$ is a contraction, we have for $n^{(k)}$
sufficiently large,
\begin{equation}\label{eqn:5.13c}
\frac{3C^2}{\nu}\int^{-k}_{-t_{n^{(k)}}}\vert
z(s)\vert_{\mathbb{L}^4}^2\, ds < \frac{\nu
\lambda_1}{2}(t_{n^{(k)}}-k).
\end{equation}
Since the set $D(\omega)$ is bounded in $\mathrm{H}$, there exists $\rho_1
>0$ such that
 $\vert x_{n^{(k)}}\vert \leq \rho_1$ for every $n^{(k)}$. Hence,
\begin{equation}\label{eqn:c1}\begin{aligned} &\limsup_{n^{(k)}\rightarrow\infty}\vert
x_{n^{(k)}}\vert^2e^{-\nu\lambda_1t_{n^{(k)}}
+\frac{3C^2}{\nu}\int^{-k}_{-t_{n^{(k)}}}\vert
z(s)\vert^2_{\mathbb{L}^4}ds} \\& \hspace{2.0 truecm}\leq
\limsup_{n^{(k)}\rightarrow\infty} \rho^2_1 e^{-\frac{\nu
\lambda_1}{2}(t_{n^{(k)}}-k)}=0.\quad \quad \quad \quad \quad
\quad \quad \quad \quad \quad  \end{aligned} \end{equation}
  \end{proof}

Therefore, by \eqref{eqn:5.10d}, and lemmas \ref{lem_8.5}, \ref{lem_8.6} and \ref{lem_8.4}, the proof of \eqref{eqn:c6} is concluded, and it only
remains to finish the proof of  inequality \eqref{eqn:5.15b}, which we are
going to do right now.

\textbf{The end of the proof of  inequality \eqref{eqn:5.15b}.}

Let us denote $\tilde{y}_k=y_k-z(-k)$ and
\begin{eqnarray*}
v^{n^{(k)}}(s)&=&v(s,-k;\omega,\varphi(t_{n^{(k)}}-k,\vartheta_{-t_{n^{(k)}}}\omega)
x_{n^{(k)}}-z(-k)),\; s \in (-k,0),\\
v_k(s)&=&v(s,-k;\omega,y_{-k}-z(-k)),\; s \in (-k,0).
\end{eqnarray*}

From property (\ref{eqn:5.1a}) and
   \cite[Lemma 7.1]{Brz_Li_2006b} we infer that
\begin{equation}\label{eqn:5.9b0}
v^{n^{(k)}}(\cdot)\to v_k \text{ weakly in } L^2(-k,0;
\mathrm{V}).
\end{equation}

Since $e^{\nu \lambda_1 \cdot}g(\cdot), e^{\nu \lambda_1
\cdot}f\in L^2(-k,0;\mathrm{V}^\prime)$, we get
\begin{equation}\label{eqn:004a}
\lim_{n^{(k)}\rightarrow \infty}\int_{-k}^0e^{\nu \lambda_1
s}\langle g(s),v^{n^{(k)}}(s)\rangle\,ds = \int_{-k}^0 e^{\nu
\lambda_1 s}\langle g(s), v_k(s) \rangle\,ds,
\end{equation}
and
\begin{equation}\label{eqn:004}
\begin{aligned}
&\lim_{n^{(k)}\rightarrow \infty}\int_{-k}^0 e^{\nu \lambda_1
s}\langle f,v^{n^{(k)}}(s)\rangle\,ds =\int_{-k}^0 e^{\nu
\lambda_1 s}\langle f, v_k(s)
 \rangle\,ds.
\end{aligned}
\end{equation}

On the other hand, using the same methods as those in the proof of
Theorem \ref{thm:4.2}, there exists a subsequence of
$\{v^{n^{(k)}}\}$,  which,  for the sake of simplicity of
notation, is denoted as the old one and which satisfies
\begin{equation}\label{eqn:5.9d}
\begin{aligned}
v^{n^{(k)}} \to v_k \quad \mbox{strongly in }
L^2(-k,0;\mathbb{L}_{loc}^2(D)).
\end{aligned}\end{equation}

 Next,
 since   $e^{\nu \lambda_1 t}z(t)$, $t\in \mathbb{R}$,  is an $\mathbb{L}^4$-valued process,
 Thus by \cite[Corollary 5.3]{Brz_Li_2006b}, %\ref{cor:4.8}
\eqref{eqn:5.9b0} and \eqref{eqn:5.9d}, we infer that
\begin{equation}\label{eqn:003}
\begin{aligned} \lim_{n^{(k)}\to \infty} &\int_{-k}^0
e^{\nu \lambda_1 s} b(v^{n^{(k)}}(s),z(s)),v^{n^{(k)}}(s))  \,ds
\cr =& \int_{-k}^0e^{\nu \lambda_1 s} b(v_k(s), z(s),v_k(s)) \,
ds.
\end{aligned}
\end{equation}

 Moreover, since the norms $[\cdot]$ and  $\Vert \cdot \Vert$  are equivalent  on $\mathrm{V}$,
 and since for any $s\in (-k,0] $, $ e^{-\nu k \lambda_1}
 \le e^{\nu \lambda_1 s} \leq 1$,  $(\int_{-k}^0 e^{\nu \lambda_1
s}[\cdot]^2\,ds)^{1/2}$ is a norm in $L^2(-k, 0;\mathrm{V})$
equivalent to the standard one. Hence,  from \eqref{eqn:5.9b0} we
obtain,
$$\begin{aligned}
&\int_{-k}^0 e^{\nu \lambda_1s}[ v_k(s) ]^2\,ds \leq
\liminf_{n^{(k)}\rightarrow\infty}\int_{-k}^0 e^{\nu \lambda_1
s}[v^{n^{(k)}}(s)]^2 \,ds.
\end{aligned}
$$
In other words,
 \begin{equation}\label{eqn:005}
\limsup_{n^{(k)}\rightarrow\infty}\Bigl\{-\int_{-k}^0[v^{n^{(k)}}(s)]^2\,ds\Bigr\}
\leq -\int_{-k}^0 e^{\nu \lambda_1 s}[v_k(s) ]^2\,ds.
\end{equation}

From \eqref{eqn:4.9c},  eqref{eqn:c6} and \eqref{eqn:003}, and inequality
\eqref{eqn:005} we conclude that

\begin{equation}\label{eqn:5.15a}\begin{aligned}
&\limsup_{n^{(k)}\to \infty} \vert
\varphi(t_{n^{(k)}},\vartheta_{-t_{n^{(k)}}}\omega)x_{n^{(k)}}-z(0)\vert^2
 \\&\hspace{2.6 truecm} \leq \int_{-\infty}^{-k}\, h(s)\, ds  +  2
\int_{-k}^0 e^{\nu \lambda_1 s} \Bigl\{\langle B(v_k(s),z(s)), v_k(s) \rangle \\
&\hspace{4.6 truecm}+\langle g(s), v_k(s) \rangle+ \langle f,
v_k(s)
 \rangle -[v_k(s) ]^2\Bigr\}\,ds.
 \end{aligned}\end{equation}

On the other hand, from (\ref{eqn:5.2a}) and (\ref{eqn:5.3a}),
we have
\begin{eqnarray}\label{eqn:006}
\vert y_0-z(0)\vert ^2  &= & \vert \varphi(k,\vartheta_{-k}\omega
)y_k-z(0)\vert ^2 =\vert v(0,-k;\omega,y_k-z(-k))\vert ^2
\nonumber\\
&= &\vert y_k-z(-k)\vert ^2e^{-\nu \lambda_1 k} +2\int_{-k}^0
e^{\nu \lambda_1 s}\Bigl\{\langle g(s),v_k(s) \rangle
\\
&+&\langle B(v_k(s),z(s)),v_k(s) \rangle + \langle f,v_k(s)
\rangle -[v_k(s)]^2\,\Bigr\}\, ds. \nonumber
\end{eqnarray}

Hence, by  combining (\ref{eqn:5.15a}) with \eqref{eqn:006}, we
get
\begin{equation*}
\begin{aligned}
&\limsup_{n^{(k)}\rightarrow\infty}\vert
\varphi(t_{n^{(k)}},\vartheta_{-t_{n^{(k)}}}\omega)x_{n^{(k)}}-z(0)\vert
^2\cr \leq & \int_{-\infty}^{-k}\, h(s)\, ds +\vert y_0-z(0)\vert
^2 -\vert y_k-z(-k)\vert
^2e^{-\nu \lambda_1 k}\cr %
\leq& \int_{-\infty}^{-k}\, h(s)\, ds +\vert y_0-z(0)\vert ^2.
\end{aligned}
\end{equation*}
which proves \eqref{eqn:5.15b} and hence
 the proof of Proposition \ref{prp:5.1} is finished.
\end{proof}

\end{document}